\newtheorem{Assumption}{Assumption}[part]
\newtheorem{Definition}{Definition}[part]
\newtheorem{Lemma}{Lemma}[part]
\newtheorem{Proposition}{Proposition}[part]
\newtheorem{Remark}{Remark}[part]
\newtheorem{Theorem}{Theorem}[part]
\newcommand{\nc}{\newcommand}
\nc{\esssup}{\mathop{\mathrm{ess\,sup}}}
\nc{\essinf}{\mathop{\mathrm{ess\,inf}}}
\nc{\argmax}{\mathop{\mathrm{arg\,max}}}
\def \lb {\label}
\def \P{\mathbb{P}}
\def \N{\mathbb{N}}
\def \R{\mathbb{R}}
\def \E{\mathbb{E}}
\def \1{\mathbf{1}}
\def \A{\mathcal{A}}
\def \Ac{{\cal A}}
\def \Bc{{\cal B}}
\def \Fc{{\cal F}}
\def \Hc{{\cal H}}
\def \Pc{{\cal P}}
\def \Sc{{\cal S}}
\def \nn {\nonumber}
\def \cdlg {\text{c\`adl\`ag}}
\def\enqs{\end{eqnarray*}}
\def\beq{\begin{eqnarray}}
\def\enq{\end{eqnarray}}
\newcommand{\Int} {\displaystyle \int}                       %symbole int?grale
\newcommand{\Sup}     {\displaystyle \sup\limits}
\title{\Large \bf Infinite Horizon Stochastic Impulse Control
with  Delay  and Random Coefficients  }
\author{
{\large Boualem Djehiche}
\thanks{Department of Mathematics, KTH Royal Institute of Technology, 100 44 Stockholm, Sweden. \texttt{e-mail: boualem@math.kth.se}}
{,\,\, \large Said Hamad{\`e}ne}\!\!
\thanks{LMM, Le Mans University, Avenue Olivier Messiaen, 72085 Le Mans, Cedex 9, France. \texttt{e-mail: hamadene@univ-lemans.fr}}
 {,\,\,\large Ibtissem Hdhiri}\thanks{Faculty of Sciences of Gab\`es and LR17ES11, \texttt{e-mail:Ibtissem.Hdhiri@fsg.rnu.tn}}
{\,\,\,\large  and Helmi Zaatra}
\thanks{Faculty of Sciences of Gab\`es and LR17ES11,
\texttt{e-mail:zaatra.helmi@gmail.com}}\,\,. }
\begin{document}
\maketitle
\begin{abstract}
We study a class of infinite horizon impulse control problems with execution delay when the dynamics of the system is described by a general stochastic process adapted to the Brownian filtration. The problem is solved by means of probabilistic tools relying on the notion of Snell envelope and  infinite horizon reflected backward stochastic differential equations. This allows us to establish the existence of an optimal strategy over all admissible strategies.
\end{abstract}
\vskip 1cm AMS subject Classifications: 60G40; 60H10; 62L15; 93E20
\vskip 1cm\noindent {\bf Key words:}
Optimal impulse control ; Execution delay ; Infinite Horizon ;  Snell envelope ;  Stochastic control ; Backward stochastic differential equations ; Optimal stopping time.

\section{Introduction}

\setcounter{equation}{0} \setcounter{Assumption}{0}
\setcounter{Theorem}{0} \setcounter{Proposition}{0}
\setcounter{Corollary}{0} \setcounter{Lemma}{0}
\setcounter{Definition}{0} \setcounter{Remark}{0} Impulse control is
one of the main topics in the control theory that has attracted a
lot of research activity since it has a wide range of applications
including mathematical finance, insurance, economics, etc. It has been studies since the 70s. For a complete overview of the problem we refer to Bensoussan and Lions (1984).

\vskip0.1cm Several papers are devoted to the Markovian case using
tools from dynamic programming and quasi-variational inequalities,
see e.g. \cite{Harrison,Bensoussan,jeanblanc,Oksendal2,Bruder} among
many others. The first attempt to study the non-Markovian case was
achieved  in Djehiche {\it et al.} \cite{Djehiche} by using
probabilistic tools. Their approach relies on the notion of Snell
envelope  and  reflected backward stochastic differential equations (BSDEs for short) to solve impulse control problems
over a finite time horizon. We also refer to Hdhiri and Karouf
\cite{Hdhiri} for the risk-sensitive case.
\vskip0.1cm

In this work, we study an infinite horizon impulse control with
execution delay, i.e. there is a fixed lag of time $\Delta$ between
the time of decision-making and the time when the execution is
performed. We mention the work by Robin \cite{Robin1} for the
impulse control with delay only in one pending order during the
horizon time. Bayraktar and Egami \cite{Bayraktar} adopt the same
framework of the previous paper for the infinite horizon case, where
they assume the magnitude of the impulse is chosen at the time of
execution. Under restrictive assumptions on the controlled state
process, Bar-Ilan and Sulem \cite{Bar-Ilan1} study an  infinite
horizon impulse control with an arbitrary number of pending orders.
{\O}ksendal and Sulem \cite{Oksendal2} also study the problem with
execution delay when the underlying process is a jump-diffusion.
Hdhiri and Karouf \cite{Hdhiri2} consider a finite horizon impulse
control problem with execution delay where they use  the same
probabilistic tools of \cite{Djehiche}, such as the Snell envelope
notion and  Reflected BSDEs to solve the problem. Due to the delay
$\Delta>0$, when the horizon is finite, this problem turns into the
backward resolution of a finite number of optimal stopping problems
(\cite{Bruder,Palczewski,Hdhiri}).
\medskip

The main contribution of the present work is a solution to an
infinite horizon  impulse control problem with execution delay for a
wide class of stochastic processes adapted to the Brownian
filtration which are not necessarily Markovian. Furthermore, the
running reward functional is not only a deterministic function of
the underlying process but may also be random. Our method relies on
constructing  an approximation scheme for the value function in
terms of a sequence of solutions of infinite horizon reflected
BSDEs. Different from the finite horizon case, the problem now
cannot be reduced to the backward resolution of a finite optimal
stopping problem. The main issue that we solve in this paper is to
establish continuity of the value function of the problem.

The procedure of finding a sequence of optimal stopping times can be
divided into a sequence of steps as follows. Given an initial time
$t$, we find the first time $\tau_{1}$ where it is optimal to
intervene  and we denote the corresponding impulse size
$\beta_{1}^{*}$. Note that this is the first optimal stopping time
after the initial time when the controller may  intervene. The
execution time is not instantaneous, but it occurs after a lag of
time $\Delta$. Next, we proceed to find the first time after
$\tau_{1}+\Delta$ where it is optimal to intervene. This will give
the optimal stopping time $\tau_{2}$ and the corresponding
impulse size $\beta_{2}^{*}$. We continue this procedure over and
over again. \vskip0.1cm

The paper is organized as follows. In section 2, we provide some
preliminaries and recall existence and uniqueness results for
solutions to  infinite horizon reflected BSDEs. In section 3, we
formulate the impulse control problem.  In section 4, we construct
an approximation scheme for the value function of the control
problem, relying on the infinite horizon reflected BSDEs and the
Snell envelope. Section 5, is devoted to establishing  existence of
an optimal impulse control over strategies with a limited number of
impulses. In section 6, we prove the continuity of the value
function and derive an optimal impulse control over all admissible
strategies. Finally, in section 7, we extend the study to  the
risk-sensitive case which involves exponential utilities.  At the end of the paper, in a short appendix, we present the Snell envelope properties
and the notion of predictable and optional projections.

\section{Preliminary results}

\setcounter{equation}{0} \setcounter{Assumption}{0}
\setcounter{Theorem}{0} \setcounter{Proposition}{0}
\setcounter{Corollary}{0} \setcounter{Lemma}{0}
\setcounter{Definition}{0} \setcounter{Remark}{0}

Let $(\Omega,\mathcal{F},\P)$ be a complete
probability space on which is defined a standard $d$-dimensional
Brownian motion $B=(B_{t})_{t\geq 0}$. We denote by
$(\mathcal{F}_{t}^{0}:= \sigma\{B_{s}, s\leq t\})_{t\geq 0}$
the natural filtration of $B$, $(\mathcal{F}_{t})_{t\geq 0}$ its completion
with the $\P$-null sets of $\Fc$ and
$\mathcal{F}_{\infty}=\bigvee_{t \geq 0}\mathcal{F}_{t}$.
Let $\Pc$ be the $\sigma$-algebra on
$\Omega\times [0,\infty[$ of $\mathcal{F}_{t}$-progressively
measurable sets.

For a stochastic process $(y_t)_{t\in [0,\infty)}$ we define its value
at $t=+\infty$ by $y_{\infty}=\limsup_{t\rightarrow \infty}y_t$.
On the other hand, we say that $y$ is continuous at $t=+\infty$ if
$\lim_{t\rightarrow \infty}y_t$ exists. We then set $y_{\infty}=\lim_{t\rightarrow \infty}y_t$. Finally, if $y$ 
is a non-negative (or bounded by below), $\cdlg$, $\Fc_t$-supermartingale then it is continuous at
$t=+\infty$ (\cite{ks}, pp.18).
\medskip

Introduce the following spaces.
\medskip

\noindent i)  $L^{2}=\{\eta:  \mathcal{F}_{\infty}-$ measurable random variable, such that $\E[|\eta|^{^{2}}]<\infty\}$,\\
ii) $\mathcal{H}^{2,m}=\{(v_{t})_{0\leq t <\infty}:$ $\Pc$-measurable, $\R^m$-valued process such that $\E[\int_{0}^{\infty}|v_{s}|^{2}\,ds]<\infty\}$ ($m\ge 1$),\\
iii) $\mathcal{S}^{2}=\{{(y_t)}_{0 \leq t \leq \infty}:$
$\Pc$-measurable process  such that $\E[\sup_{0\leq t\leq \infty}|y_{t}|^{2}]<\infty\}$,\\
iv) $\mathcal{S}_{c}^{2}=\{(y_{t})_{0\leq t\leq \infty}:$ continuous process of  $\mathcal{S}^{2}$\},\\
v) $\mathcal{S}_{i}^{2}=\{(k_{t})_{0\leq t\leq \infty}:$ continuous non-decreasing process of  $\mathcal{S}^{2}$, s.t. $k_{0}=0$\},\\
vi) $\mathcal{T}_{t}=\{\nu, \,\,\mathcal{F}_{t}$-stopping time such that $\mathbb{P}$-a.s. $\nu\geq t\}$.
\medskip

\noindent Next, we give the definition of a solution of an infinite horizon
reflected backward stochastic differential equation with terminal
condition $\xi$, driver $g$ and a lower barrier $X$.

\begin{Definition} We  say that the triple of $\Pc$-measurable processes $(Y_{t},Z_{t},K_{t})_{t \geq 0}$ is a solution of the infinite horizon BSDE associated with $(g,\xi,L)$, if
\begin{equation}
 \label{Eq1}\left\{
    \begin{array}{ll}
      Y\in \Sc^{2}_c, Z\in \Hc^{2,d} \mbox{ and } K\in \Sc_{i}^{2};\\
       \displaystyle Y_{t}=\xi+\int_{t}^{\infty}g(s,Y_{s},Z_{s})\,
ds+K_{\infty}-K_{t}-\int_{t}^{\infty}Z_{s}\,dB_{s},\,\,t\geq 0; \\
      Y_{t}\geq X_{t},\,\, t \geq 0 \,\, \mbox{ and } \,\,\int_0^\infty (Y_t-X_t)dK_t = 0.
    \end{array}
  \right.
\end{equation}
\end{Definition}

We have the following  existence and uniqueness result of the solution of (\ref{Eq1}).

\begin{Theorem} [\cite{
Hamadene1}]\label{Theor}
Assume that
\begin{itemize}
\item[ (i)] $\xi$ is $\Fc_{\infty}$-measurable and belongs to $L^{2}$, the process $X:=(X_{t})_{t\geq 0}$ belongs to  $\mathcal{S}_c^{2}$ and such that $\limsup\limits_{t\rightarrow +\infty} X_{t}\leq \xi $\,\, $\mathbb{P}$-a.s.
\item[(ii)] The driver $g$ is a map from $[0,\infty)\times\Omega\times \R^{1+d}$ to $\R$ which satisfies
\begin{itemize}
\item[(a)] The process $(g(t,0,0))_{t\geq 0}$ belongs to $\mathcal{H}^{2,d}.$
\item[(b)] There exist two positive deterministic borelian functions $u_1$ and $u_2$ from
$\R^+$ into $\R^+$ such that $\int_0^\infty u_1(t) dt < \infty$, $\int_0^\infty u_2^2(t) dt < \infty$ and for every $(y,z)$ and $(y',z')$ in $\R^{1+d}$
$$\P-a.s.,\,\,\,
|g(t,y,z)-g(t,y',z')| \leq u_1(t) |y-y'|+ u_2(t)|z-z'|,\,\,\,\, t\in [0,\infty).
$$
 \end{itemize}
 \end{itemize}
Then there exists a triple of processes $(Y,Z,K)$ which satisfies \eqref{Eq1} and the following representation holds true.
\begin{equation}
\forall t\geq 0,\,\,
Y_{t}=\esssup\limits_{\tau \in
\mathcal{T}_t}\E\left[\int_{t}^{\tau}g(s,Y_{s},Z_{s})\,ds+
X_{\tau} \1_{[\tau <\infty]}+\xi \1_{[\tau =\infty]}|\mathcal{F}_{t}\right]. \label{Yesssup1}
\end{equation}
Furthermore, for any $t\ge 0$, the stopping time
$$
D_t =\left\{\begin{array}{ll}\inf\{s\geq t, \,\,Y_s \leq X_s\} \text{ if finite}, \\
                                          +\infty \quad \hbox{ otherwise,}
                                                       \end{array}
                                                     \right.
                                                     $$
is optimal after $t$ in the sense that
\begin{equation}
Y_{t}=\E \left[\int_{t}^{D_t}g(s,Y_{s},Z_{s})ds+
X_{D_t} \1_{[D_t<\infty]}+\xi \1_{[D_t =\infty]}|\mathcal{F}_{t} \right].\qed\label{optimal}
\end{equation}
\end{Theorem}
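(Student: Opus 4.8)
The plan is to construct the solution via a Picard-type iteration combined with the classical penalization/Snell-envelope machinery, and to obtain the representation formulae as a by-product. First I would handle the driver's $y$-dependence by a contraction argument: fix $(Y,Z)\in\Sc^2_c\times\Hc^{2,d}$, set $g_Y(s,z):=g(s,Y_s,z)$, and consider the map $\Phi$ sending $(Y,Z)$ to the solution of the reflected BSDE with driver $(s,z)\mapsto g(s,Y_s,z)$, terminal value $\xi$ and barrier $X$. The integrability hypothesis $\int_0^\infty u_1(t)\,dt<\infty$ together with $\int_0^\infty u_2^2(t)\,dt<\infty$ is exactly what makes $\Phi$ a contraction (or some iterate of it a contraction) on $\Sc^2_c\times\Hc^{2,d}$ endowed with a suitably weighted norm, e.g. $\E[\int_0^\infty e^{2A(s)}(|Y_s|^2+|Z_s|^2)\,ds]$ with $A(s)=\int_0^s(u_1+u_2^2)$; so existence and uniqueness for the full equation \eqref{Eq1} follows from the Banach fixed point theorem once we have it for drivers not depending on $y$.

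For the base case — driver $g(s,Z_s)$ only, i.e. $u_1\equiv 0$ — I would run the penalization scheme: for $n\ge 1$ let $(Y^n,Z^n)$ solve the (non-reflected) infinite-horizon BSDE with driver $g(s,z)+n(y-X_s)^-$ and terminal condition $\xi$, whose well-posedness on $[0,\infty)$ under (ii)(a)–(b) and $\xi\in L^2$, $X\in\Sc^2_c$, $\limsup_t X_t\le\xi$ is standard. Comparison gives $Y^n_t\le Y^{n+1}_t$; the upper bound needed for $L^2$-estimates uniform in $n$ comes from comparing with the solution of the BSDE with driver $g(s,z)$ and terminal value $\xi\vee(\esssup_t X_t)$-type dominating process, using that $X\in\Sc^2_c$. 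Then one shows $K^n_t:=\int_0^t n(Y^n_s-X_s)^-\,ds$ is bounded in $\Sc^2$, passes to the limit to get $(Y,Z,K)$, checks $Y\in\Sc^2_c$, $Z\in\Hc^{2,d}$, $K\in\Sc^2_i$, and verifies the Skorokhod condition $\int_0^\infty(Y_t-X_t)\,dK_t=0$ via the usual lower-semicontinuity/Dini argument. The one genuinely infinite-horizon subtlety is the terminal behaviour at $t=+\infty$: one must check $Y_t\to\xi$ (not merely $\limsup$), which is where the remark in the preliminaries — a nonnegative càdlàg supermartingale is continuous at $+\infty$ — is invoked, after reducing $Y$ to such a process by adding the appropriate finite-variation and martingale parts.

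The representation \eqref{Yesssup1} is then obtained from the characterization of $Y$ as a Snell envelope: for fixed $t$, $Y_t=\esssup_{\tau\in\Tc_t}\E[\int_t^\tau g(s,Y_s,Z_s)\,ds+X_\tau\1_{[\tau<\infty]}+\xi\1_{[\tau=\infty]}\mid\Fc_t]$ follows because the process $s\mapsto \int_0^s g(r,Y_r,Z_r)\,dr+Y_s$ is the Snell envelope of $s\mapsto\int_0^s g(r,Y_r,Z_r)\,dr+X_s\1_{[s<\infty]}+\xi\1_{[s=\infty]}$ — indeed $Y+\int_0^\cdot g$ is a supermartingale dominating this payoff and is a martingale on $\{Y>X\}$, i.e. between the jumps of $K$, by the Skorokhod condition. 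Finally, for \eqref{optimal}, one shows $D_t$ is optimal: on $[t,D_t)$ we have $Y_s>X_s$ so $dK_s=0$ there, hence $Y+\int^\cdot g$ is a true martingale on $[t,D_t]$ (using a uniform integrability argument on $[0,\infty]$, again leaning on $\Sc^2$-integrability and the integrability of the driver), and at $D_t$ either $D_t=\infty$ and $Y_{D_t}=\xi$, or $D_t<\infty$ and $Y_{D_t}=X_{D_t}$ by right-continuity; taking conditional expectation gives \eqref{optimal}.

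The main obstacle I anticipate is the control of behaviour at infinity: proving the uniform-in-$n$ $\Sc^2$-bounds for the penalized solutions, the convergence $K^n_\infty\to K_\infty$ in $L^2$, and the continuity of $Y$ at $t=+\infty$ with the correct limit $\xi$ all hinge delicately on the two integrability conditions on $u_1,u_2$ and on $X\in\Sc^2_c$; these replace the trivial terminal-time estimates available in the finite-horizon theory. Everything else is a routine adaptation of the finite-horizon reflected-BSDE arguments.
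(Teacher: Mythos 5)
This theorem is not proved in the paper at all: it is quoted verbatim from \cite{Hamadene1} (note the \verb"\qed" already placed in the statement), so your proposal can only be measured against that reference rather than against an in-paper argument. With that caveat, your outline is essentially the standard one and is sound: a weighted-norm contraction removes the dependence of the driver on $(y,z)$ (the weight $e^{\lambda\int_0^s(u_1(r)+u_2^2(r))\,dr}$ is equivalent to a constant precisely because $\int_0^\infty u_1\,dt<\infty$ and $\int_0^\infty u_2^2\,dt<\infty$, which is what makes the fixed point work on all of $[0,\infty)$), one then solves the equation for a frozen driver, and the identification of $Y_\cdot+\int_0^\cdot g$ as the Snell envelope of the stopped payoff yields \eqref{Yesssup1} and the optimality of $D_t$. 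The main divergence from \cite{Hamadene1} is in the base case: there the solution for a frozen driver is obtained directly as the Snell envelope of $\int_0^t g\,ds+X_t\1_{[t<\infty]}+\xi\1_{[t=\infty]}$ together with the Doob--Meyer decomposition of that supermartingale --- this is exactly where the hypothesis $\limsup_{t\to\infty}X_t\le\xi$ enters, ensuring the payoff is of class $[D]$ on $[0,\infty]$ and that $Y$ is continuous at $t=+\infty$ with the correct limit $\xi$ --- whereas you propose penalization. Both routes are classical; the Snell-envelope route delivers \eqref{Yesssup1} for free, while penalization requires precisely the uniform-in-$n$ estimates at infinity that you correctly flag as the delicate point. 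Two small technical remarks on your sketch: the stability estimate making $\Phi$ a contraction between two \emph{reflected} solutions is clean here only because the barrier $X$ is the same for both iterates, so that $\int_0^\infty(\delta Y_s)\,d(\delta K_s)\le 0$ and the usual square-root loss in the reflected a priori estimate does not occur; and $K$ is continuous, so the martingale property of $Y_\cdot+\int_0^\cdot g$ should be asserted on the random intervals where $K$ is flat rather than ``between the jumps of $K$''.
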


\section{Formulation of the impulse problem with delay}
\label{sec3}
\setcounter{equation}{0} \setcounter{Assumption}{0}
\setcounter{Theorem}{0} \setcounter{Proposition}{0}
\setcounter{Corollary}{0} \setcounter{Lemma}{0}
\setcounter{Definition}{0} \setcounter{Remark}{0}
Let  $L=(L_t)_{ t \geq  0}$ be a stochastic process that describes the evolution of
a system which we assume $\Pc$-measurable and with values in $\R^l$. An impulse control  is a sequence of
pairs $\delta = (\tau_n, \xi_n)_{n \geq 1}$ in which $(\tau_n)_{n \geq 1}$ is a
sequence of ${\cal F}_t$-stopping times such that
 $0 \leq \tau_1\leq\ldots\leq \tau_{n} \ldots \;\:\P$-a.s. and $(\xi_n)_{n\geq 1}$
 a sequence of random variables with values in a {\it{ finite}} subset $U:=\{\beta_1,..,\beta_p\}$ of $\R^l$  such that $\xi_n$ is ${\Fc}_{\tau_n}$-measurable. Considering the subset $U$ finite is in line with the fact that, in practice, the controller has only access to limited resources which allows him to exercise
impulses of finite size.

For any $n \geq 1$, the stopping time $\tau_{n}$ stands for the $n$-th time where the controller makes the decision to impulse the system with a magnitude equal to $\xi_{n}$ and  which will be executed after a time lag  $\Delta$. Therefore, we require that  $\tau_{n+1}-\tau_{n}\geq\Delta$, $\P$-a.s., and then we obviously have $\lim_{n\rightarrow +\infty}\tau_{n}= +\infty$.
 \vskip0.1cm

The sequence $\delta = (\tau_n, \xi_n)_{n \geq 1}$ is said to be an admissible strategy of impulse control, and the set of admissible strategies will be denoted by ${\A}$.
\vskip0.1cm
 When the decision maker implements the strategy $\delta=(\tau_{n},\xi_{n})_{n\geq 1}$, the controlled process $L^{\delta}=(L^{\delta}_t)_{t\geq 0})$  is defined as follows. For any $t\ge 0$,
$$L_{t}^{\delta} =\left\{\begin{array}{ll}
                                                         L_{t} \qquad  \text{if}\quad 0 \leq t < \tau_{1}+\Delta,\\
                                                         L_{t}+\xi_{1}+\cdots +\xi_{n} \quad \text{ if} \quad \tau_{n}+\Delta \leq t < \tau_{n+1}+\Delta, n\geq 1,&
                                                       \end{array}
                                                     \right.
 $$
or in a compact form
$$L_{t}^{\delta} =L_{t}+\sum_{n\geq
1}\xi_{n}\1_{[\tau_{n}+\Delta \leq t]}.$$
On the other hand, when the strategy $\delta$ is implemented, the associated total discounted expected payoff (the reward function) is given by:
\begin{equation}
J(\delta):=\E \left[\int_{0}^{\infty} e^{-rs}h(s,L_{s}^{\delta})\;ds-\sum_{n\geq
1}e^{-r(\tau_{n}+\Delta)}\psi(\xi_{n})\right], \label{reward}
\end{equation}
where
\begin{enumerate}[i)]
\item $h$ is a non-negative function which stands for the instantaneous reward and $r$, the discount factor, is a positive real constant.
\item $\psi$ is the cost of making an impulse or intervention  and it has the form
$$\psi(\xi)=k+\phi(\xi),$$
where $k$ (resp. $\phi$) is a positive constant (resp. non-negative function) and stands for the fixed (resp. variable) part of the cost of making an intervention.
\end{enumerate}
The objective is to find an optimal strategy $\delta^{\ast}=(\tau_{n}^{\ast},\xi_{n}^{\ast})_{n\geq1}$, i.e. which satisfies
$$J(\delta^{\ast})=\Sup_{\delta\in \mathcal{A}}J(\delta).$$
\begin{Remark}\normalfont
The process $L$ can take the form
\begin{equation}
L_t=x+\int_0^tb(s,\omega)ds+\int_0^t\sigma(s,\omega)dB_s, \,\,t\geq 0,
\end{equation}
where $b$ (resp. $\sigma$) is a process of $\Hc^{2,1}$ (resp.  $\Hc^{2,d}$). Then $L$ is an It\^o process which is not Markovian and then  the standard methods in e.g. \cite{Bensoussan,jeanblanc,Oksendal2}, etc. based on the Markovian properties do not apply.
\end{Remark}
\vskip0.1cm

Throughout this paper, we make the following assumptions.
\begin{Assumption}\label{assumpt}
i) The functions $h:[0,+\infty)\times\Omega\times\R^{l}\longrightarrow[0,+\infty)$ is
$\Pc\otimes \Bc(\R^l)$-measurable and uniformly bounded by a constant $\gamma$ in all its arguments i.e.,
$$\P\mbox{-a.s.},\,\,\forall\;(t,x)\in[0,+\infty)\times\R^{l},\quad  0\leq h(t,w,x)\leq
\gamma. $$
ii) $\phi$ is a non-negative function defined on $U$. Note that since $U$ is finite, $\phi(\xi)$ is obviously bounded for any $\xi$  random variable with values in $U$.
\end{Assumption}

\section{Iterative scheme}
\label{sec4} In this section, we consider an iterative scheme which relies on infinite horizon reflected BSDEs  in order to find an optimal strategy that maximizes the total discounted expected reward (\ref{reward}).  Let $\nu$ be an $\Fc_t$-stopping time and $\xi$ a finite $\Fc_\nu$-random variable, i.e.,
$\text{card}(\xi(\Omega))<\infty$. Next, let $(Y_{t}^{0}(\nu,\xi),Z_{t}^{0}(\nu,\xi))_{t\geq 0}$ be the solution in $\Sc^2_{c} \times \Hc^{2,d}$ of the following standard BSDE with infinite horizon.
\begin{equation}Y_{t}^{0}(\nu,\xi)=\int_{t}^{\infty}e^{-rs}h(s,L_{s}+\xi)\1_{[s\geq \nu]}\,ds-\int_{t}^{\infty}Z_{s}^{0}(\nu,\xi)\,dB_{s},\quad t\ge 0.
\label{eq 4}
\end{equation}
The solution of \eqref{eq 4} exists and is unique under Assumption \ref{assumpt} thanks to the result by Z.Chen (\cite{Chen}, Theorem $1$). In addition, the process $Y^0(\nu,\xi)$ satisfies, for any $t \geq 0$,
\begin{equation}\label{Y0}
Y_{t}^{0}(\nu,\xi)=\E\left[\int_{t}^{\infty}e^{-rs}h(s,L_{s}+\xi)\1_{[s\geq \nu]}\,ds|\Fc_t\right].
\end{equation}

We will now define  $Y^{n}(\nu,\xi)$ for $n\geq 1$, iteratively in the following way. For any $n \geq1$, let \\$(Y^{n}(\nu,\xi),Z^{n}(\nu,\xi),K^{n}(\nu,\xi))$ be a triple of processes of $\mathcal{S}_{c}^{2}\times \Hc^{2,d} \times \mathcal{S}_{i}^{2}$ which satisfies, for every $t\geq 0$,
\begin{align}
i)  \nonumber \,\,&Y_{t}^{n}(\nu,\xi)=\int_{t}^{\infty}e^{-rs}h(s,L_{s}+\xi)\1_{[s\geq \nu]}\,ds+K_{\infty}^{n}(\nu,\xi)-K_{t}^{n}(\nu,\xi)-\int_{t}^{\infty}Z_{s}^{n}(\nu,\xi)\,dB_{s} \,,
\\
ii)  &\,\, Y_{t}^{n}(\nu,\xi)  \geq  O_{t}^{n}(\nu,\xi):=  \E \left[\Int_{t}^{t+\Delta}e^{-rs}h(s,L_{s}+\xi)\1_{[s\geq \nu]}\,ds |\Fc_{t}\right]
   \nonumber     \\
         &\qquad   \qquad \qquad \qquad +\max\limits_{\beta\in U}\bigg\lbrace \E\bigg\lbrack-e^{-r(t+\Delta)}\psi(\beta)+Y_{t+\Delta}^{n-1}(\nu,\xi+\beta)|\Fc_{t}\bigg\rbrack \bigg\rbrace \,\,, \nonumber\\
        iii)& \,\,\Int_{0}^{\infty}(Y_{t}^{n}(\nu,\xi)-O_{t}^{n}(\nu,\xi))\,dK_{t}^{n}(\nu,\xi)=0.\label{oordn}
\end{align}
Note that once $Y^{n-1}(\nu,\xi)$ is defined, the process $(O_{t}^{n}(\nu,\xi))_{t\ge 0}$ is defined through the optional projections of the non-adapted process $(\int_{t}^{t+\Delta}e^{-rs}h(s,L_{s}+\xi)\1_{[s\geq \nu]}\,ds)_{t\ge 0}$ and \\ $(-e^{-r(t+\Delta)}\psi(\beta)+Y_{t+\Delta}^{n-1}(\nu,\xi+\beta))_{t\ge 0}$ $(\beta \in U)$ (see Part (II) in the appendix for more details). \\

\medskip
We have the following properties of the processes $Y^{n}(\cdot,\cdot)$, ${n\geq 1}$.
\begin{Proposition} \label{Proposition1} For any $n\geq 1$, the triple $(Y^{n}(\nu,\xi),Z^{n}(\nu,\xi),K^{n}(\nu,\xi))$ is well-posed and satisfies, for all $ t\ge 0$,
 \begin{equation}
 Y_{t}^{n}(\nu,\xi)=\esssup\limits_{\tau \in
\mathcal{T}_t}\E\left[\int_{t}^{\tau} e^{-rs}h(s,L_{s}+\xi)\1_{[s\geq \nu]}\,ds+  O_{\tau}^{n}(\nu,\xi)|\mathcal{F}_{t}\right].
\label{ordrenn}\end{equation}
Moreover, we have \\
i) for all $t\geq 0$
\begin{equation} 0 \leq Y_{t}^{n}(\nu,\xi)\leq \frac{\gamma}{r} e^{-rt}.
\label{bornee}\end{equation}
ii) For all $n\ge 0$ and $t\ge 0$,
\begin{equation}\label{compyn} Y_{t}^{n}(\nu,\xi)\leq Y_{t}^{n+1}(\nu,\xi).\end{equation}
\end{Proposition}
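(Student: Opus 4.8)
The plan is to prove the three assertions in turn, exploiting the representation formula \eqref{Yesssup1} from Theorem \ref{Theor} together with an induction on $n$.

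\emph{Well-posedness and the representation \eqref{ordrenn}.} I would argue by induction on $n$. The base case $n=0$ is covered by \cite{Chen} and the representation \eqref{Y0}. Assume $(Y^{n-1}(\nu,\xi),Z^{n-1}(\nu,\xi),K^{n-1}(\nu,\xi))$ is well defined and satisfies the stated bounds for every finite $\Fc_\nu$-random variable (in particular for $\xi+\beta$, $\beta\in U$). Then the barrier $O^{n}(\nu,\xi)$ is well defined as an optional projection (as pointed out after \eqref{oordn}), and I must check the hypotheses of Theorem \ref{Theor} with driver $g(s,y,z)=e^{-rs}h(s,L_s+\xi)\1_{[s\ge\nu]}$, terminal condition $\xi_{\mathrm{term}}=0$, and lower barrier $X=O^{n}(\nu,\xi)$. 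The driver does not depend on $(y,z)$, so the Lipschitz condition (ii)(b) holds trivially with $u_1=u_2=0$; condition (ii)(a) holds because $0\le g(s,0,0)=e^{-rs}h(s,L_s+\xi)\1_{[s\ge\nu]}\le \gamma e^{-rs}$, which is in $\Hc^{2,1}$. For (i), I need $O^{n}(\nu,\xi)\in\Sc^2_c$ and $\limsup_{t\to\infty}O^{n}_t(\nu,\xi)\le 0$. The membership in $\Sc^2_c$ follows because each of the two pieces defining $O^n$ is dominated (in absolute value) by an $\Sc^2$ process: the integral term by $\frac{\gamma}{r}e^{-rt}$, the term involving $Y^{n-1}$ by the induction bound \eqref{bornee} plus $e^{-r(t+\Delta)}\psi(\beta)$, which is bounded on the finite set $U$; continuity of the optional projections of these càdlàg processes gives $O^n\in\Sc^2_c$ (cf. the appendix). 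The condition $\limsup_{t\to\infty}O^n_t\le 0$ also follows from the same domination, since both dominating processes tend to $0$. Theorem \ref{Theor} then yields existence of the triple together with the representation \eqref{ordrenn}, completing the induction step.

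\emph{Proof of (i), the bound \eqref{bornee}.} From the representation \eqref{ordrenn}, nonnegativity of $Y^n$ is immediate: take $\tau\equiv t$ in the essential supremum, obtaining $Y^n_t(\nu,\xi)\ge \E[O^n_t(\nu,\xi)\mid\Fc_t]=O^n_t(\nu,\xi)\ge 0$ (the barrier $O^n$ is nonnegative since $h\ge 0$ and, by the induction hypothesis, $Y^{n-1}\ge 0$; here I would note that the max over $U$ of the second bracket is $\ge 0$ — actually this needs a small argument, see below). For the upper bound, I would bound each term inside the essential supremum: $\int_t^\tau e^{-rs}h(s,L_s+\xi)\1_{[s\ge\nu]}\,ds\le \int_t^\infty\gamma e^{-rs}\,ds=\frac{\gamma}{r}e^{-rt}$, and on the other hand $O^n_\tau(\nu,\xi)$ is, by the induction hypothesis $Y^{n-1}_{t+\Delta}\le \frac{\gamma}{r}e^{-r(t+\Delta)}$ and $\psi,\phi\ge 0$, at most $\frac{\gamma}{r}e^{-r(\tau+\Delta)}+\frac{\gamma}{r}e^{-r(\tau+\Delta)}$ minus positive cost terms — here I have to be a little careful: combining the $\int_t^{t+\Delta}$ piece and the $Y^{n-1}$ piece of $O^n_\tau$ telescopes to at most $\frac{\gamma}{r}e^{-r\tau}$ (the delayed integral plus the tail from $\tau+\Delta$ reconstitutes a tail integral from $\tau$), so $\int_t^\tau(\cdots)\,ds+O^n_\tau\le \frac{\gamma}{r}e^{-rt}$. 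Taking conditional expectation and then the essential supremum gives \eqref{bornee}. I would carry out this telescoping bound explicitly.

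\emph{Proof of (ii), monotonicity \eqref{compyn}.} Again by induction on $n$. For $n=0$: $Y^0_t(\nu,\xi)=\E[\int_t^\infty e^{-rs}h(s,L_s+\xi)\1_{[s\ge\nu]}\,ds\mid\Fc_t]$, while the barrier $O^1_t(\nu,\xi)$ consists of the first-$\Delta$ piece of that same integral plus a max over $\beta$ of an expression which — because it involves the \emph{cost} $-\psi(\beta)$ — could be negative; hence one cannot directly compare $Y^0$ and $Y^1$ termwise. The correct argument is: $Y^1_t(\nu,\xi)\ge \E[\int_t^{\tau}e^{-rs}h\1_{[s\ge\nu]}\,ds+O^1_\tau\mid\Fc_t]$ for any $\tau\in\Tc_t$, and letting $\tau\to\infty$ (using $O^1_\tau\to 0$, dominated convergence, and that $Y^1\in\Sc^2_c$ so the essential supremum is in fact attained in the limit) gives $Y^1_t\ge \E[\int_t^\infty e^{-rs}h\1_{[s\ge\nu]}\,ds\mid\Fc_t]=Y^0_t$. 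For the induction step $n\to n+1$: by the induction hypothesis $Y^{n-1}_{t+\Delta}(\nu,\xi+\beta)\le Y^{n}_{t+\Delta}(\nu,\xi+\beta)$ for all $\beta\in U$, whence termwise $O^n_t(\nu,\xi)\le O^{n+1}_t(\nu,\xi)$ for all $t$; then, since $Y^n(\nu,\xi)$ and $Y^{n+1}(\nu,\xi)$ are solutions of reflected BSDEs with the \emph{same} driver $e^{-rs}h(s,L_s+\xi)\1_{[s\ge\nu]}$ and the same terminal value $0$ but ordered barriers $O^n\le O^{n+1}$, the comparison principle for reflected BSDEs — equivalently, the monotonicity of the essential-supremum representation \eqref{ordrenn} in the barrier — gives $Y^n_t(\nu,\xi)\le Y^{n+1}_t(\nu,\xi)$.

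The main obstacle is the $n=0$-to-$n=1$ comparison (and, relatedly, verifying $O^n\ge 0$): because the barrier $O^n$ mixes a genuinely nonnegative running-reward term with an intervention term carrying a negative cost, neither nonnegativity of $O^n$ nor the inequality $Y^0\le Y^1$ is a termwise triviality; both must be obtained via the essential-supremum representation by choosing the stopping time $\tau$ large (letting it go to $+\infty$) and invoking the integrability/decay afforded by \eqref{bornee} and Assumption \ref{assumpt}. Once this is handled, the inductive step is routine via the comparison principle for reflected BSDEs with ordered obstacles.
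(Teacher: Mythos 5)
Your proposal is correct and follows essentially the same route as the paper: induction on $n$, verification of the hypotheses of Theorem \ref{Theor} for the reflected BSDE with barrier $O^{n}(\nu,\xi)$, the telescoping estimate $O^{n}_{\tau}(\nu,\xi)\le \frac{\gamma}{r}e^{-r\tau}$ combined with the representation \eqref{ordrenn} for the bound \eqref{bornee}, and the comparison principle for reflected BSDEs (seeded by $Y^{0}\le Y^{1}$) for \eqref{compyn}. If anything, you are more careful than the paper at the two points it glosses over — nonnegativity of $Y^{n}$ and the base comparison $Y^{0}\le Y^{1}$ — both of which you correctly obtain by sending $\tau\to+\infty$ in the essential-supremum representation rather than by a termwise comparison of the barriers.
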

\begin{proof} We will proceed by induction. Let $\nu$ be a stopping time, $\xi$ a generic $\mathcal{F}_{\nu}$-measurable random variable. As previously noted,  for $n=0$, the pair $(Y_{t}^{0}(\nu,\xi),Z_{t}^{0}(\nu,\xi))_{t\geq 0}$ exists, belongs to $\Sc^2_{c} \times \Hc^{2,d}$ and satisfies (\ref{bornee}) since $0\leq h\leq \gamma$.

Consider now the case $n=1$. First note that the process
$O^{1}(\nu,\xi)$ belongs to $\Sc^2_{c}$ (by Appendix, Part (II)) and
$\underset{t\to \infty}{\lim}O^{1}_{t}(\nu,\xi)=0$. Actually this
holds true since $Y^{0}(\nu,\xi)$ is continuous and
$\underset{t\to \infty}{\lim}Y_{t}^{0}(\nu,\xi)=0$ by \eqref{bornee}.
Therefore the triple of processes
$(Y^{1}(\nu,\xi),Z^{1}(\nu,\xi),K^{1}(\nu,\xi))$ is well defined
through the BSDE \eqref{oordn} and by (\ref{Yesssup1}) satisfies
\eqref{ordrenn}. Finally, for $t \geq 0$,
\begin{eqnarray}\nonumber
O_{t}^{1}(\nu,\xi)&=&\E \bigg\lbrack \Int_{t}^{t+\Delta}  e^{-rs}  h(s,L_{s}+\xi)\1_{[s\geq \nu]}\,ds|\mathcal{F}_{t}\bigg\rbrack \\ \nonumber
&+&\max\limits_{\beta\in U}\{\E\bigg\lbrack e^{-r(t+\Delta)}(-\psi(\beta))+Y_{t+\Delta}^{0}(\nu,\xi+\beta)|\mathcal{F}_{t}\bigg\rbrack \}\\\nonumber
&\leq& \E\bigg\lbrack \frac{\gamma}{r}\{  e^{-rt}- e^{-r(t+\Delta)}\}-ke^{-r(t+\Delta)}+\frac{\gamma}{r}e^{-r(t+\Delta)}|\mathcal{F}_{t}\bigg\rbrack \\
&\leq& \frac{\gamma}{r}  e^{-rt}.\label{48}
\end{eqnarray}
Again, by  the characterization (\ref{ordrenn}), we have, for every $t\ge 0$,
\begin{eqnarray}
Y_{t}^{1}(\nu,\xi)=\esssup\limits_{\tau \in
\mathcal{T}_t}\E \left[\int_{t}^{\tau} e^{-rs}h(s,L_{s}+\xi)\1_{[s\geq \nu]}\,ds+  O_{\tau}^{1}(\nu,\xi)|\mathcal{F}_{t}\right].
\end{eqnarray}
Therefore,
\begin{eqnarray*}
0\le Y_{t}^{1}(\nu,\xi) & \leq &\esssup\limits_{\tau \in
\mathcal{T}_t}\E\bigg\lbrack \int_{t}^{\tau}\gamma e^{-rs}ds+\frac{\gamma}{r}  e^{-r\tau} |\mathcal{F}_{t}\bigg\rbrack \\
& \leq & \esssup\limits_{\tau \in
\mathcal{T}_t}\E\bigg\lbrack \frac{\gamma}{r} (e^{-rt}-e^{-r\tau})+\frac{\gamma}{r}  e^{-r\tau} |\mathcal{F}_{t}\bigg\rbrack=\frac{\gamma}{r} e^{-rt}.
\end{eqnarray*}
Let us now assume that for some $n$ the triple
$(Y^{n}(\nu,\xi),Z^{n}(\nu,\xi),K^{n}(\nu,\xi))$, for any $\xi\in \Fc_\nu$,  is well-posed and that
\eqref{ordrenn}-\eqref{bornee} hold true. The process
$O^{n+1}(\nu,\xi)$ belongs to $\Sc^2_{c}$ as the predictable
projection of a continuous process and $\underset{t\to \infty}{\lim}O^{n+1}(\nu,\xi)=0$ by \eqref{bornee}  which is valid by the induction hypothesis. Therefore the triple
$(Y^{n+1}(\nu,\xi),Z^{n+1}(\nu,\xi),K^{n+1}(\nu,\xi))$ is well-posed
by the BSDE \eqref{oordn} and by (\ref{Yesssup1}) satisfies
\eqref{ordrenn}. Finally, the fact that $Y^{n+1}(\nu,\xi)$ satisfies
\eqref{bornee} can be obtained as for $Y^{1}(\nu,\xi)$ since
$O^{n+1}(\nu,\xi)$ satisfies \eqref{48}. The induction is now
complete.

Finally we have also \eqref{compyn} by comparison of solutions of
reflected BSDEs since we obviously have, for any $\in \Fc_\nu$,
$Y^{0}(\nu,\xi)\le Y^{1}(\nu,\xi)$ and we conclude by using an
induction argument.
\end{proof}
\begin{Remark} \normalfont
 Since $\text{card}(\xi(\Omega))$ is finite, then $\xi$ takes only a finite number of values $k_{1},\ldots,k_{m}$. Therefore,  
using the uniqueness of the solution of the BSDE \eqref{oordn} it follows immediately  that,  for any $t\geq \nu$,
\begin{equation}\label{determinynn}
Y_{t}^{n}(\nu,\xi)=\sum_{k=1}^{m} Y_{t}^{n}(\nu,k_{i})\1_{\{\xi=k_{i}\}}.
\end{equation}
This  means that $Y_{t}^{n}(\nu,\xi)$ is determined by $Y_{t}^{n}(\nu,\theta)$,
for $\theta$ constant which belongs to $\xi(\Omega)$.
On the other hand take the limit w.r.t. $n$ to obtain
\begin{equation}\label{determiny}
Y_{t}(\nu,\xi)=\sum_{k=1}^{m} Y_{t}(\nu,k_{i})\1_{\{\xi=k_{i}\}}.
\end{equation}
\end{Remark}
\begin{Proposition} \label{Proposition2} \, Let $\nu$ be a stopping time and $\xi$  an $\mathcal{F}_{\nu}$-measurable random  variable, then
\begin{enumerate}[i)]
\item the sequence $(Y^{n}(\nu,\xi))_{n\geq 0}$ converges increasingly and pointwisely $\P$-a.s. to a c\`adl\`ag process
$Y(\nu,\xi)$ which satisfies, for all $t\geq 0$,
\begin{equation}
Y_{t}(\nu,\xi)=\esssup\limits_{\tau \in
\mathcal{T}_t}\E\left[\int_{t}^{\tau}e^{-rs}h(s,L_{s}+\xi)\1_{[s\geq \nu]}\,ds+ O_{\tau}(\nu,\xi)|\mathcal{F}_{t}\right],
\label{limit}\end{equation}
where
\begin{eqnarray}\begin{array}{lll}
O_{t}(\nu,\xi)=\E\bigg\lbrack \Int_{t}^{t+\Delta}e^{-rs}h(s,L_{s}+\xi)\1_{[s\geq \nu]}\,ds|\mathcal{F}_{t}\bigg\rbrack
         \\ \qquad\qquad\qquad\qquad\qquad+\max\limits_{\beta\in U}\bigg\lbrace \E\bigg\lbrack -e^{-r(t+\Delta)}\psi(\beta)+Y_{t+\Delta}(\nu,\xi+\beta)|\mathcal{F}_{t}\bigg\rbrack \bigg\rbrace.\label{limito}
         \end{array}
         \end{eqnarray}
 \item If  $\nu'$ is a stopping time  satisfying $\nu\leq\nu'$, then $\P$.a.s., $Y_{t}(\nu,\xi)=Y_{t}(\nu',\xi)$ for all $t\geq\nu'$.
\end{enumerate}
\end{Proposition}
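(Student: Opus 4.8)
\emph{Part (i).} The plan is to set $Y_t(\nu,\xi):=\sup_n Y^n_t(\nu,\xi)$, which by the monotonicity \eqref{compyn} and the bound \eqref{bornee} is the pointwise $\P$-a.s. increasing limit of the $Y^n_t(\nu,\xi)$ and satisfies $0\le Y_t(\nu,\xi)\le\frac{\gamma}{r}e^{-rt}$. Since $U$ is finite and the terms $Y^{n-1}_{t+\Delta}(\nu,\xi+\beta)$ increase to $Y_{t+\Delta}(\nu,\xi+\beta)$ while remaining uniformly bounded, conditional monotone convergence gives $O^n_t(\nu,\xi)\uparrow O_t(\nu,\xi)$ for each $t$, with $O(\nu,\xi)$ as in \eqref{limito}. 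To obtain \eqref{limit} I would pass to the limit in \eqref{ordrenn}: for ``$\le$'', note that for each $\tau\in\mathcal{T}_t$ and each $n$ one may replace $O^n_\tau$ by the larger $O_\tau$, take $\esssup_\tau$, and let $n\to\infty$; for ``$\ge$'', fix $\tau$ and use $\E[\int_t^\tau e^{-rs}h(s,L_s+\xi)\1_{[s\ge\nu]}ds+O^n_\tau(\nu,\xi)|\Fc_t]\le Y^n_t(\nu,\xi)\le Y_t(\nu,\xi)$, pass to the limit on the left by conditional monotone convergence (the $O^n_\tau$ being increasing and uniformly bounded), and finally take $\esssup_\tau$.

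\emph{The regularity of $Y(\nu,\xi)$} is the main obstacle I expect: one cannot simply invoke the usual $\cdlg$ regularity of Snell envelopes, because $O(\nu,\xi)$ depends on $Y(\nu,\xi+\beta)$ and there is no well-founded recursion on the impulse argument to close the loop. I would instead argue by supermartingale regularization. From \eqref{oordn} the process $\widehat Y^n_t:=Y^n_t(\nu,\xi)+\int_0^t e^{-rs}h(s,L_s+\xi)\1_{[s\ge\nu]}ds$ equals a martingale minus the continuous non-decreasing process $K^n(\nu,\xi)$, hence is a non-negative continuous supermartingale, and $\widehat Y^n\uparrow\widehat Y_t:=Y_t(\nu,\xi)+\int_0^t e^{-rs}h(s,L_s+\xi)\1_{[s\ge\nu]}ds$. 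Since $t\mapsto\E[\widehat Y_t]=\sup_n\E[\widehat Y^n_t]$ is a supremum of non-increasing right-continuous functions, it is non-increasing and right-continuous, so a classical regularization result for increasing limits of supermartingales shows that $\widehat Y$, and hence $Y(\nu,\xi)$, admits a $\cdlg$ modification which is a supermartingale; I take that version. As modifications do not affect the conditional expectations defining $O(\nu,\xi)$, the representation \eqref{limit}--\eqref{limito} is preserved, and the $\cdlg$ bounded supermartingale $Y(\nu,\xi)$ also satisfies $\lim_{t\to\infty}Y_t(\nu,\xi)=0$.

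\emph{Part (ii).} Here the plan is an induction on $n$ showing $\1_{[t\ge\nu']}Y^n_t(\nu,\xi)=\1_{[t\ge\nu']}Y^n_t(\nu',\xi)$ for all $t\ge0$ and all finite $\Fc_\nu$-measurable $\xi$ (which is also $\Fc_{\nu'}$-measurable since $\Fc_\nu\subseteq\Fc_{\nu'}$), after which letting $n\to\infty$ with part (i) gives the claim. For $n=0$: on $\{t\ge\nu'\}$ every $s\ge t$ satisfies $s\ge\nu'\ge\nu$, so $\1_{[s\ge\nu]}=\1_{[s\ge\nu']}$ there, and \eqref{Y0} yields the equality. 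For the step, assume the statement at rank $n-1$ for all such arguments, in particular for $\xi+\beta$, $\beta\in U$. For $\tau\in\mathcal{T}_t$ one has $\tau\ge\nu'$ on $\{t\ge\nu'\}$, so in $O^n_\tau(\nu,\xi)$ the running indicator agrees with that of $O^n_\tau(\nu',\xi)$ on $[\tau,\tau+\Delta]$, and since $\tau+\Delta\ge\nu'$ the induction hypothesis gives $Y^{n-1}_{\tau+\Delta}(\nu,\xi+\beta)=Y^{n-1}_{\tau+\Delta}(\nu',\xi+\beta)$; hence $\1_{[t\ge\nu']}O^n_\tau(\nu,\xi)=\1_{[t\ge\nu']}O^n_\tau(\nu',\xi)$, and likewise for $\int_t^\tau e^{-rs}h(s,L_s+\xi)\1_{[s\ge\nu]}ds$. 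Multiplying \eqref{ordrenn} by $\1_{[t\ge\nu']}\in\Fc_t$ and carrying it inside the $\esssup$ over $\tau\in\mathcal{T}_t$ yields $\1_{[t\ge\nu']}Y^n_t(\nu,\xi)=\1_{[t\ge\nu']}Y^n_t(\nu',\xi)$, closing the induction; the only technical care needed is that the optional projections entering $O^n$ respect the restriction to $\{t\ge\nu'\}$, which is routine given the $\cdlg$ property.
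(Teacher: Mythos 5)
Your proposal follows the same overall architecture as the paper's proof (monotone limit via \eqref{compyn}--\eqref{bornee}, $\cdlg$ regularity through the supermartingales $\widehat Y^n_t=Y^n_t(\nu,\xi)+\int_0^t e^{-rs}h(s,L_s+\xi)\1_{[s\ge\nu]}ds$, and an induction on $n$ for part (ii)), with two local variations. For \eqref{limit} you pass to the limit directly in the $\esssup$ representation \eqref{ordrenn} by a two-sided inequality; the paper instead quotes the stability of the Snell envelope under increasing pointwise limits (Appendix, Part (I)-v)). Your route is a hands-on proof of that lemma in this special case and is valid (the conditional monotone convergence is licit since $O^n_\tau\ge-\|\psi\|_\infty$ uniformly), provided you also record that $O^n_\tau\uparrow O_\tau$ holds at stopping times, i.e.\ up to evanescence and not merely for each fixed $t$ --- which follows since $Y^n\uparrow Y$ for all $t$ simultaneously a.s.\ and $U$ is finite. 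For part (ii) you argue through the $\esssup$ on $\{t\ge\nu'\}$ where the paper appeals to uniqueness for the reflected BSDE \eqref{oordn}; your version is if anything more explicit.

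The one step you should repair is the regularity argument. You propose to take a $\cdlg$ \emph{modification} of $\widehat Y$. That is both weaker than what the statement asserts (the proposition claims the pointwise limit $Y(\nu,\xi)$ \emph{is} $\cdlg$) and potentially harmful downstream: \eqref{limit} evaluates $O$ at stopping times, and Proposition \ref{Proposition_cont} analyses the jumps of $Y(\nu,\xi)$ at predictable stopping times using the fact that $Y$ is the increasing limit of the $Y^n$; a modification that is not indistinguishable from the limit would not support either. The theorem you are implicitly invoking (Dellacherie--Meyer, increasing limits of right-continuous supermartingales with integrable upper envelope --- the very reference the paper cites) actually delivers the stronger conclusion: the pointwise supremum $\sup_n\widehat Y^n$ is \emph{indistinguishable} from a $\cdlg$ supermartingale, so almost every path of the limit itself is already $\cdlg$ and no change of version is needed. (Your observation that $t\mapsto\E[\widehat Y_t]$ is right-continuous is correct and is exactly the ingredient that upgrades ``modification'' to ``indistinguishability'' here.) With that adjustment the proof is complete and matches the paper's.
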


\begin{proof}
i) From Proposition \ref{Proposition1}, we have that the sequence  $(Y_{t}^{n}(\nu,\xi))_{n\geq 0}$ is increasing and satisfies, for any $n\geq 0$,
   $$
   0\le Y_{t}^{n}(\nu,\xi) \leq \frac{\gamma}{r}e^{-rt}.
   $$
  Then taking the limit as $n\rightarrow \infty$, we obtain that the sequence $(Y^{n}(\nu,\xi))_{n\geq0}$ converges to the $\Pc$-measurable process $Y(\nu,\xi)$ satisfying
 \begin{equation} \label{BorneY}
  \forall t\ge 0,\,\quad    0 \leq Y_{t}(\nu,\xi)\leq\frac{\gamma}{r}e^{-rt}.
\end{equation}

Let us now  show that $(Y_{t}(\nu,\xi))_{t\ge 0}$ is c\`adl\`ag.
Indeed, by (\ref{ordrenn}) it follows that the process $\left(
Y_{t}^{n}(\nu,\xi)+ \int_{0}^{t}e^{-rs}h(s,L_{s}+\xi)\1_{[s\geq
\nu]}ds\right)_{t\geq 0}$ is a continuous supermartingale which
converges increasingly and pointwisely to the process $\left(
Y_{t}(\nu,\xi)+ \int_{0}^{t}e^{-rs}h(s,L_{s}+\xi)\1_{[s\geq
\nu]}ds\right)_{t\geq 0}$, which is c\`adl\`ag, as a limit of
increasing sequence of continuous supermatingales (for further
details, see Dellacherie and Meyer Vol. B, pp. 86). In particular
$\left( Y_{t}(\nu,\xi) \right)_{t\geq 0}$ is c\`adl\`ag. Therefore
the  process $(O_{t}(\nu,\xi))_{t\ge 0}$ is also c\`adl\`ag (see
Part (II) in Appendix). To complete the proof, it is enough to use
point v) of Part (I) in Appendix and \eqref{ordrenn} since $(O^{n}(\nu,\xi))_{n\ge 1}\nearrow
O(\nu,\xi)$ pointwisley.

ii) We proceed by induction on $n$. Since the solution of the BSDE
  $$Y_{t}^{0}(\nu,\xi)=\int_{t}^{\infty} e^{-rs} h(s,L_{s}+\xi)\1_{[s\geq \nu]}\,ds-\int_{t}^{\infty}Z_{s}^0(\nu,\xi)\,dB_{s},\quad t\ge 0,$$
is unique, it follows that, for any $\xi\in\mathcal{F}_{\nu}$, $Y_{t}^{0}(\nu,\xi)=Y_{t}^{0}(\nu',\xi)$ for any $t\geq\nu'$. Suppose now that the property is also valid for some $n$, i.e. for every $\xi \in \mathcal{F}_{\nu}$,
$Y_{t}^{n}(\nu,\xi)=Y_{t}^{n}(\nu',\xi)$ for any $t\geq\nu'$. Then $O_{t}^{n+1}(\nu,\xi)=O_{t}^{n+1}(\nu',\xi), \,\,\\forall t\geq\nu'$. Also by uniqueness of the solution of (\ref{oordn}), we have the following equality.
 \begin{equation}\label{remplace}
 \forall \xi \in \mathcal{F}_{\nu},\,\quad Y_{t}^{n+1}(\nu,\xi)=Y_{t}^{n+1}(\nu',\xi),\,\,\forall t\geq\nu'.
  \end{equation}
Hence, the property holds true for any $n\geq 0$, therefore by taking the limit as $n\to + \infty$, we obtain  the proof of the claim.
\end{proof}
\section{Infinite delayed  impulse control with a finite number of interventions}
In this section we consider the case when the controller is allowed to make use of a finite number $n\ge 1$ at most of interventions. Let us define the set of bounded (by $n$) strategies by
$$\mathcal{A}_{n}:=\{(\tau_{k},\xi_{k})_{k\geq 0} \in \mathcal{A},\, \text{such that}\quad \tau_{n}+\Delta = + \infty,\, \mathbb{P}-a.s \}.$$
$\mathcal{A}_{n}$ is the set of strategies where only $n$ impulses
at most are made. We state now the main result of this section.
\begin{Proposition}\lb{53}
Let $n\ge 1$ be fixed. Then there exists a strategy $\delta_{n}^{*}$ which belongs to $\mathcal{A}_{n}$ such that
$$ Y_{0}^{n}(0,0)= \Sup_{\delta \in \mathcal{A}_{n} }J(\delta)=J(\delta^{*}_{n})$$
which means that $\delta_{n}^{*}$ is optimal in $\mathcal{A}_{n}$.
\end{Proposition}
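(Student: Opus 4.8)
The plan is to prove Proposition by induction on $n$, constructing $\delta_n^*$ recursively and showing that at each step the iterated value function $Y^n$ coincides with the optimal payoff over $\mathcal{A}_n$. The guiding principle is that $Y_t^n(\nu,\xi)$ should be interpreted as the optimal expected remaining payoff, discounted and started from the "state" $(\nu,\xi)$ meaning that the accumulated impulse size so far is $\xi$ and the system has already been shifted after time $\nu$, with at most $n$ further interventions allowed. The representation \eqref{ordrenn} together with the optimal stopping time supplied by Theorem \ref{Theor} (applied to the reflected BSDE \eqref{oordn}) is the engine that turns the analytic identity into an actual strategy.

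For the base case $n=1$: apply Theorem \ref{Theor} to \eqref{oordn} with $n=1$, $\nu=0$, $\xi=0$. This yields an optimal stopping time $D_0^1$, which we rename $\tau_1^*$; it is the first time $Y^1$ touches the obstacle $O^1$. At $\tau_1^*$ we pick $\xi_1^* = \beta_1^* \in U$ realizing the $\max_{\beta\in U}$ inside the definition of $O^1$ (a measurable selection, available since $U$ is finite and the quantities inside are $\Fc_{\tau_1^*}$-measurable). After $\tau_1^* + \Delta$ no further impulse is made, so $\delta_1^* := (\tau_1^*,\beta_1^*)$ and then $\tau_2^* + \Delta = +\infty$, placing $\delta_1^* \in \mathcal{A}_1$. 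Using the optimality identity \eqref{optimal}, the fact that $Y^0(\nu,\xi)$ is exactly the conditional expected future running reward after the single shift (by \eqref{Y0}), and the tower property, one computes $Y_0^1(0,0) = \E[\int_0^{\tau_1^*} e^{-rs}h(s,L_s)\,ds + O_{\tau_1^*}^1(0,0)]$ and unfolds $O^1$ to recognize the right-hand side as $J(\delta_1^*)$. That $Y_0^1(0,0) \geq J(\delta)$ for every $\delta \in \mathcal{A}_1$ follows from the $\esssup$ in \eqref{ordrenn}: any such $\delta$ has a single stopping time $\tau_1$ and a single impulse $\xi_1$, and plugging $\tau = \tau_1$ into \eqref{ordrenn} while bounding $O^1_{\tau_1}(0,0)$ by the $\beta = \xi_1$ term dominates $J(\delta)$.

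For the inductive step, assume the claim holds for $n-1$ uniformly over all starting data $(\nu,\xi)$: for each stopping time $\nu$ and finite $\Fc_\nu$-measurable $\xi$, there is $\delta^*_{n-1}(\nu,\xi) \in \mathcal{A}_{n-1}$ (started after $\nu$, with cumulative shift $\xi$) whose payoff equals $Y^{n-1}_\cdot(\nu,\xi)$ in the appropriate conditional sense. Given $(0,0)$, apply Theorem \ref{Theor} to \eqref{oordn} to obtain $\tau_1^* := D_0^n$ and select $\beta_1^* \in U$ attaining the maximum in $O^n_{\tau_1^*}(0,0)$. Then set $\nu := \tau_1^* + \Delta$, $\xi := \beta_1^*$, and graft on the strategy $\delta^*_{n-1}(\nu, \beta_1^*)$ for the remaining at most $n-1$ impulses; concatenating gives $\delta_n^* \in \mathcal{A}_n$ (the delay constraint $\tau_{k+1}^* - \tau_k^* \geq \Delta$ is automatic because the grafted strategy only intervenes after $\nu = \tau_1^*+\Delta$). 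Optimality of $\tau_1^*$ via \eqref{optimal}, the definition of $O^n$, and the induction hypothesis applied at $(\tau_1^*+\Delta, \beta_1^*)$ combine — via the tower property and \eqref{determinynn} to handle that $\beta_1^*$ is random with finitely many values — to yield $Y_0^n(0,0) = J(\delta_n^*)$. For the reverse inequality $Y_0^n(0,0) \geq \sup_{\delta \in \mathcal{A}_n} J(\delta)$: given any $\delta = (\tau_k,\xi_k)_{k\geq 1} \in \mathcal{A}_n$, its first impulse time $\tau_1 \in \mathcal{T}_0$ can be plugged into \eqref{ordrenn}, and the tail of $\delta$ after $\tau_1+\Delta$ is a strategy in $\mathcal{A}_{n-1}$ started at $(\tau_1+\Delta,\xi_1)$, so by induction its payoff is $\le Y^{n-1}_{\tau_1+\Delta}(\tau_1+\Delta,\xi_1) \le \max_{\beta\in U} Y^{n-1}_{\tau_1+\Delta}(\tau_1+\Delta,\xi_1+\text{(correction)})$; bounding $O^n_{\tau_1}$ from below by the $\beta=\xi_1$ term and assembling shows $J(\delta) \le Y_0^n(0,0)$.

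The main obstacle is the careful bookkeeping in the inductive step: one must formulate the induction hypothesis not just at $(0,0)$ but uniformly over all admissible starting data $(\nu,\xi)$, and show that the "tail" of a strategy in $\mathcal{A}_n$ after its first execution is legitimately a strategy in $\mathcal{A}_{n-1}$ relative to shifted initial data, while the grafted optimal tail strategy respects the delay constraint and the $\Fc_{\tau_k}$-measurability requirements. The randomness of the selected impulse $\beta_1^*$ forces one to invoke the pasting identity \eqref{determinynn}, and the conditional (rather than deterministic) nature of the optimality identities means the algebra must be done under $\E[\cdot\mid\Fc_0]$ throughout, with repeated use of the tower property; all the genuinely analytic content (existence, the Snell/BSDE representations, boundedness, monotonicity) is already supplied by Theorem \ref{Theor} and Propositions \ref{Proposition1}--\ref{Proposition2}.
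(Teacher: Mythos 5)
Your proposal is correct and follows essentially the same route as the paper: peel off the first intervention via the optimal stopping time supplied by Theorem \ref{Theor} and the representation \eqref{ordrenn}, select the maximizing impulse by a finite measurable selection justified through \eqref{determinynn}, use the shift identity \eqref{remplace} to restart at $(\tau+\Delta,\beta)$, and obtain the reverse inequality by replacing the optimal stopping time and impulse with arbitrary ones. The only difference is presentational: you package the $n$-fold unfolding as a formal induction with the hypothesis strengthened to arbitrary starting data $(\nu,\xi)$, whereas the paper defines the whole strategy $\delta_n^*$ by an explicit forward recursion and iterates the same identity directly (``repeat this reasoning as many times as necessary''), which amounts to the same argument.
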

\begin{proof} We first define the strategy $\delta_{n}^{*}$. Let $\tau_{0}^{n}$ be the stopping time defined as
$$\tau_{0}^{n}= \begin{cases}\inf\{ s\in[0,\infty),\,O_{s}^{n}(0,0)\geq Y_{s}^{n}(0,0)\},\\ +\infty \quad \text{otherwise}.\end{cases}$$
Then
\begin{eqnarray*}
O_{\tau_{0}^{n}}^{n}(0,0)&:=&\E \bigg\lbrack
\Int_{\tau_{0}^{n}}^{\tau_{0}^{n}+\Delta}  e^{-rs}  h(s,L_{s})ds|
\mathcal{F}_{\tau_{0}^{n}}
\bigg\rbrack\\ &+&
\max\limits_{\beta\in U}\bigg\lbrace\E \bigg\lbrack
e^{-r(\tau_{0}^{n}+\Delta)}(-\psi(\beta))+ Y_{\tau_{0}^{n}+\Delta}^{n-1}(0,\beta)|
\mathcal{F}_{\tau_{0}^{n}}]\bigg\rbrace
\\ &=& \E \bigg\lbrack \Int_{\tau_{0}^{n}}^{\tau_{0}^{n}
+\Delta} e^{-rs}h(s,L_{s})ds|\mathcal{F}_{\tau_{0}^{n}}\bigg\rbrack \\ &+&
\max\limits_{\beta\in U}\bigg\lbrace \E \bigg\lbrack e^{-r(\tau_{0}^{n}+\Delta)}
(-\psi(\beta))+Y_{\tau_{0}^{n}+\Delta}^{n-1}(\tau_{0}^{n},\beta)|\mathcal{F}_{\tau_{0}^{n}}
\bigg\rbrack
\end{eqnarray*}
since, as mentioned previously in \eqref{remplace},
$Y_{\tau_{0}^{n}+\Delta}^{n-1}(\tau_{0}^{n},\beta)=Y_{\tau_{0}^{n}+\Delta}^{n-1}(0,\beta)$
for any $\beta \in U$. Therefore, as $U$ is finite, there exists
$\beta_{0}^{n}$ with values in $U$,
$\mathcal{F}_{\tau_{0}^{n}}$-measurable such that
\begin{equation}\lb{conformite} O_{\tau_{0}^{n}}^{n}(0,0)=\E \bigg\lbrack \Int_{\tau_{0}^{n}}^{\tau_{0}^{n}+\Delta}  e^{-rs}  h(s,L_{s})ds - e^{-r(\tau_{0}^{n}+\Delta)}\psi(\beta_{0}^{n})+Y_{\tau_{0}^{n}+\Delta}^{n-1}(\tau_{0}^{n},\beta_{0}^{n})|\mathcal{F}_{\tau_{0}^{n}}\bigg\rbrack.\end{equation}

\noindent The r.v. $\beta_{0}^{n}$ can be constructed in the
following way. For $i=1,\ldots,p$, let $\A_i$ be the set,
$$
 \A_i:=\bigg\lbrace
   \max\limits_{\beta\in U}\E \bigg\lbrack e^{-r(\tau_{0}^{n}+\Delta)}
   (-\psi(\beta))+Y_{\tau_{0}^{n}+\Delta}^{n-1}(\tau_{0}^{n},\beta)
   |\mathcal{F}_{\tau_{0}^{n}}\bigg\rbrack =
   \E \bigg\lbrack e^{-r(\tau_{0}^{n}+\Delta)}
   (-\psi(\beta_i))+Y_{\tau_{0}^{n}+\Delta}^{n-1}(\tau_{0}^{n},\beta_i)
   |\mathcal{F}_{\tau_{0}^{n}}\bigg\rbrack   \bigg\rbrace.
   $$
We then define $\beta_{0}^{n}$ as
$$
\beta_{0}^{n}=\beta_1 \mbox{ on } \A_1 \text{ and  }
\beta_{0}^{n}=\beta_j \mbox{ on
}\A_j\backslash\bigcup_{k=1}^{j-1}\A_k \text{ for }j=2,\ldots,p.
$$
Therefore, by \eqref{determinynn}, $\beta^n_0$ satisfies
\eqref{conformite}. Indeed,
\begin{align}\label{inside1}
\nn&\E\bigg\lbrack - e^{-r(\tau_{0}^{n}+\Delta)}\psi(\beta_{0}^{n})+Y_{\tau_{0}^{n}+\Delta}^{n-1}(\tau_{0}^{n},\beta_{0}^{n})|\mathcal{F}_{\tau_{0}^{n}}\bigg\rbrack\\\nn&\nn\qquad=
\E\bigg\lbrack \sum_{i=1,p}1_{\A_i}\{- e^{-r(\tau_{0}^{n}+\Delta)}\psi(\beta_i)+Y_{\tau_{0}^{n}+\Delta}^{n-1}(\tau_{0}^{n},\beta_i)\}|\mathcal{F}_{\tau_{0}^{n}}\bigg\rbrack\nn\\&\nn\qquad
=
\sum_{i=1,p}1_{\A_i}\E\bigg\lbrack \{- e^{-r(\tau_{0}^{n}+\Delta)}\psi(\beta_i)+Y_{\tau_{0}^{n}+\Delta}^{n-1}(\tau_{0}^{n},\beta_i)\}|\mathcal{F}_{\tau_{0}^{n}}\bigg\rbrack\\&\nn\qquad=
\sum_{i=1,p}1_{\A_i}\max\limits_{\beta\in U}\E \bigg\lbrack e^{-r(\tau_{0}^{n}+\Delta)}
   (-\psi(\beta))+Y_{\tau_{0}^{n}+\Delta}^{n-1}(\tau_{0}^{n},\beta)
   |\mathcal{F}_{\tau_{0}^{n}}\bigg\rbrack\\&\qquad=
   \max\limits_{\beta\in U}\E \bigg\lbrack e^{-r(\tau_{0}^{n}+\Delta)}
   (-\psi(\beta))+Y_{\tau_{0}^{n}+\Delta}^{n-1}(\tau_{0}^{n},\beta)
   |\mathcal{F}_{\tau_{0}^{n}}\bigg\rbrack
\end{align}
which yields the claim.
\medskip

\noindent Next, for any $k \in\{ 1,\ldots,n-1 \}$, once $( \tau_{k-1}^{n},
\beta_{k-1}^{n})$ is defined, we define $\tau_{k}^{n}$ by
$$ \tau_{k}^{n}=\inf \bigg\lbrace s\geq \tau_{k-1}^{n}+\Delta , O_{s}^{n-k}(\tau_{k-1}^{n},\beta_{0}^{n}+\cdots +\beta_{k-1}^{n} )\geq Y_{s}^{n-k}(\tau_{k-1}^{n},\beta_{0}^{n}+\cdots +\beta_{k-1}^{n} )\bigg\rbrace.$$
and $\beta_{k}^{n}$ an $\mathcal{F}_{\tau_{k}^{n}}$-r.v. valued in
$U$ such that
\begin{eqnarray*}
O_{\tau_{k}^{n}}^{n-k}(\tau_{k-1}^{n},\beta_{0}^{n}+\cdots
+\beta_{k-1}^{n} )&=&\E \bigg\lbrack
\Int_{\tau_{k}^{n}}^{\tau_{k}^{n}+\Delta}  e^{-rs}
h(s,L_{s}+\beta_{0}^{n}+\cdots +\beta_{k-1}^{n} )ds\\ &-&
e^{-r(\tau_{k}^{n}+\Delta)}\psi(\beta_{k}^{n})+Y_{\tau_{k}^{n}+\Delta}^{n-k-1}(\tau_{k}^{n},\beta_{0}^{n}+\cdots
+\beta_{k-1}^{n}+\beta_{k}^{n} )|\mathcal{F}_{\tau_{k}^{n}}
\bigg\rbrack
\end{eqnarray*}
where we have used the equality
$Y_{\tau_{k}^{n}+\Delta}^{n-k-1}(\tau_{k-1}^{n},\beta^n_0+\cdots
+\beta_{k-1}^{n}+\beta)=Y_{\tau_{k}^{n}+\Delta}^{n-k-1}(\tau_{k}^{n},\beta_{0}^{n}+\cdots
+\beta_{k-1}^{n}+\beta)$ for any $\beta \in U$(see
\eqref{remplace}).
\medskip

We now show that $\delta_n^*$ is optimal. First note that from the characterisation(\ref{ordrenn}), we have that
$$ Y_{0}^{n}(0,0)=\sup\limits_{\tau \in
\mathcal{T}_0}\E \bigg\lbrack \int_{0}^{\tau} e^{-rs}h(s,L_{s})\,ds+  O_{\tau}^{n}(0,0)\bigg\rbrack.$$
Moreover, since the process $ O^{n}(0,0)$ is continuous on $[0,\infty]$ ($O_\infty^{n}(0,0)=\underset{t\to \infty}{\lim} O^{n}_t(0,0)=0$), then the stopping time $\tau_{0}^{n}$ is optimal after $0$. It follows that
\begin{equation}\label{Yn}
Y_{0}^{n}(0,0)= \E \bigg\lbrack \int_{0}^{\tau_{0}^{n}} e^{-rs}h(s,L_{s})\,ds+  O_{\tau_{0}^{n}}^{n}(0,0)\bigg\rbrack.
\end{equation}
But,
\begin{eqnarray*}\begin{array}{lll}
O_{\tau_{0}^{n}}^{n}(0,0):=\E \bigg\lbrack \Int_{\tau_{0}^{n}}^{\tau_{0}^{n}+\Delta}  e^{-rs}  h(s,L_{s}) ds|\mathcal{F}_{\tau_{0}^{n}} \bigg\rbrack \\ \qquad\qquad\qquad+ \max\limits_{\beta\in U}\bigg\lbrace \E \bigg\lbrack e^{-r(\tau_{0}^{n}+\Delta)}(-\psi(\beta))+Y_{\tau_{0}^{n}+\Delta}^{n-1}(0,\beta)|\mathcal{F}_{\tau_{0}^{n}}\bigg\rbrack \bigg\rbrace
\\  \qquad\qquad = \E \bigg\lbrack \Int_{\tau_{0}^{n}}^{\tau_{0}^{n}+\Delta}  e^{-rs}  h(s,L_{s})ds|\mathcal{F}_{\tau_{0}^{n}}\bigg\rbrack \\ \qquad\qquad \qquad+  \max\limits_{\beta\in U} \bigg\lbrace \E \bigg\lbrack e^{-r(\tau_{0}^{n}+\Delta)}(-\psi(\beta))+Y_{\tau_{0}^{n}+\Delta}^{n-1}(\tau_{0}^{n},\beta)|\mathcal{F}_{\tau_{0}^{n}} \bigg\rbrack  \bigg\rbrace
\\  \qquad\qquad =\E \bigg\lbrack \Int_{\tau_{0}^{n}}^{\tau_{0}^{n}+\Delta}  e^{-rs}  h(s,L_{s})ds- e^{-r(\tau_{0}^{n}+\Delta)} \psi(\beta_{0}^{n})+Y_{\tau_{0}^{n}+\Delta}^{n-1}(\tau_{0}^{n},\beta_{0}^{n})|\mathcal{F}_{\tau_{0}^{n}} \bigg\rbrack.
\end{array}
\end{eqnarray*}
The previous equality combined with (\ref{Yn}) gives

$$ Y_{0}^{n}(0,0)=\E\left[ \int_{0}^{\tau_{0}^{n}} e^{-rs}h(s,L_{s})\,ds+ \Int_{\tau_{0}^{n}}^{\tau_{0}^{n}+\Delta}  e^{-rs}  h(s,L_{s})ds- e^{-r(\tau_{0}^{n}+\Delta)}\psi(\beta_{0}^{n})+Y_{\tau_{0}^{n}+\Delta}^{n-1}(\tau_{0}^{n},\beta_{0}^{n})\right].$$
Hence,
\begin{equation}\label{Yn00}
Y_{0}^{n}(0,0)=\E\left[ \int_{0}^{\tau_{0}^{n}+\Delta} e^{-rs}h(s,L_{s})\,ds- e^{-r(\tau_{0}^{n}+\Delta)}\psi(\beta_{0}^{n})+Y_{\tau_{0}^{n}+\Delta}^{n-1}(\tau_{0}^{n},\beta_{0}^{n})\right].
\end{equation}
By using (\ref{ordrenn}) again, we obtain
$$ Y_{\tau_{0}^{n}+\Delta}^{n-1}(\tau_{0}^{n},\beta_{0}^{n})=\esssup\limits_{\tau \ge
{\tau_{0}^{n}+\Delta}}\E\left[ \int_{\tau_{0}^{n}+\Delta}^{\tau}e^{-rs}h(s,L_{s}+\beta_{0}^{n})\,ds+ O_{ \tau}^{n-1}(\tau_{0}^{n},\beta_{0}^{n})|\mathcal{F}_{\tau_{0}^{n}+\Delta}\right],$$
and $\tau_{1}^{n}$ is an optimal stopping time after $\tau_{0}^{n}+\Delta $. Then,
\begin{eqnarray*}
Y_{\tau_{0}^{n}+\Delta}^{n-1}(\tau_{0}^{n},\beta_{0}^{n})&=&\E \bigg\lbrack \int_{\tau_{0}^{n}+\Delta}^{\tau_{1}^{n}}e^{-rs}h(s,L_{s}+\beta_{0}^{n})\,ds+ O_{ \tau_{1}^{n}}^{n-1}(\tau_{0}^{n},\beta_{0}^{n})|\mathcal{F}_{\tau_{0}^{n}+\Delta}\bigg\rbrack \\
&=& \E \bigg\lbrack \int_{\tau_{0}^{n}+\Delta}^{\tau_{1}^{n} }e^{-rs}h(s,L_{s}+\beta_{0}^{n})\,ds + \E \bigg\lbrack \int_{\tau_{1}^{n}}^{\tau_{1}^{n}+\Delta}e^{-rs}h(s,L_{s}+\beta_{0}^{n})\,ds |\mathcal{F}_{\tau_{1}^{n}}\bigg\rbrack\\ &+&  \E \bigg\lbrack - e^{-r(\tau_{1}^{n}+\Delta)}\psi(\beta_{1}^{n})+Y_{\tau_{1}^{n}+\Delta}^{n-2}(\tau_{1}^{n},\beta_{0}^{n}+\beta_{1}^{n})|\mathcal{F}_{\tau_{1}^{n}} \bigg\rbrack |\mathcal{F}_{\tau_{0}^{n}+\Delta} \bigg\rbrack.
\end{eqnarray*}
Therefore,
\begin{align}\label{eq518}
Y_{\tau_{0}^{n}+\Delta}^{n-1}(\tau_{0}^{n},\beta_{0}^{n}) &=\E \bigg\lbrack \int_{\tau_{0}^{n}+\Delta}^{\tau_{1}^{n}+\Delta}e^{-rs}h(s,L_{s}+\beta_{0}^{n})\,ds - e^{-r(\tau_{1}^{n}+\Delta)}\psi(\beta_{1}^{n})\nonumber\\ &+ Y_{\tau_{1}^{n}+\Delta}^{n-2}(\tau_{1}^{n},\beta_{0}^{n}+\beta_{1}^{n})|\mathcal{F}_{\tau_{0}^{n}+\Delta} \bigg\rbrack.
\end{align}
Now, inserting \eqref{eq518} in \eqref{Yn00}, we obtain
\begin{eqnarray*}
Y_{0}^{n}(0,0)& =&\E \bigg\lbrack \int_{0}^{\tau_{0}^{n}+\Delta} e^{-rs}h(s,L_{s})\,ds +\int_{\tau_{0}^{n}+\Delta}^{\tau_{1}^{n}+\Delta}e^{-rs}h(s,L_{s}+\beta_{0}^{n})\,ds \\ & - & e^{-r(\tau_{0}^{n}+\Delta)}\psi(\beta_{0}^{n}) - e^{-r(\tau_{1}^{n}+\Delta)}\psi(\beta_{1}^{n})
+ Y_{\tau_{1}^{n}+\Delta}^{n-2}(\tau_{1}^{n},\beta_{0}^{n}+\beta_{1}^{n}) \bigg\rbrack.
\end{eqnarray*}
Repeat this reasoning as many times as necessary to obtain
\begin{equation}\label{eq319}\begin{array}{lll}
Y_{0}^{n}(0,0)& =& \E \bigg\lbrack \int_{0}^{\tau_{0}^{n}+\Delta} e^{-rs}h(s,L_{s})\,ds +\sum_{1\leq k \leq n-1}\int_{\tau_{k-1}^{n}+\Delta}^{\tau_{k}^{n}+\Delta}e^{-rs}h(s,L_{s}+\beta_{0}^{n}+\cdots+\beta_{k-1}^{n})\,ds \\ & -& \sum_{k=0}^{n-1}e^{-r(\tau_{k}^{n}+\Delta)}\psi(\beta_{k}^{n})+ Y_{\tau_{n-1}^{n}+\Delta}^{0}(\tau_{n-1}^{n},\beta_{0}^{n}+\cdots+\beta_{n-1}^{n})\bigg\rbrack.\end{array}
\end{equation}
Next, in view of  (\ref{Y0}),  we have
$$
Y_{\tau_{n-1}^{n}+\Delta}^{0}(\tau_{n-1}^{n},\beta_{0}^{n}+\cdots+\beta_{n-1}^{n})=\E\left[\int_{\tau_{n-1}^{n}+\Delta}^{\infty}e^{-rs}h(s,L_{s}+\beta_{0}^{n}+\cdots+\beta_{n-1}^{n})\,ds|\Fc_{\tau_{n-1}^{n}+\Delta}\right].
$$
By inserting  the last term in \eqref{eq319},  we obtain
\begin{eqnarray*}
Y_{0}^{n}(0,0)& =& \E \bigg\lbrack \int_{0}^{\tau_{0}^{n}+\Delta} e^{-rs}h(s,L_{s})ds + \sum_{ k\geq 1 }\int_{\tau_{k-1}^{n}+\Delta }^{\tau_{k}^{n}+\Delta} e^{-rs}h(s,L_{s}+\beta_{0}^{n}+ \cdots +\beta_{k-1}^{n})\,ds
 \\ &  -& \sum_{k \geq 0}e^{-r(\tau_{k}^{n}+\Delta)} \psi(\beta_{k}^{n}) \bigg\rbrack
 =J(\delta_{n}^{*}),
\end{eqnarray*}
where we have set $\tau_n^n=+\infty$, $\P$-a.s.

Next, it remains to show that the strategy $\delta_{n}^{*} $ is optimal over $\mathcal{A}_{n}$, i.e., $J(\delta_{n}^{*})\geq J(\delta_{n}^{'}) $ for any $\delta_{n}^{\prime}\in \mathcal{A}_{n}$. Indeed, let  $\delta_{n}^{\prime}=(\tau_{k}^{\prime},\beta_{k}^{\prime})_{k\ge 0}$ be a strategy of $\mathcal{A}_{n}$ (then $\tau_{n}^{\prime}=+\infty$, $\P$-a.s.). The definition of the Snell envelope allows us to write
\begin{equation} Y_{0}^{n}(0,0)\geq \E \left[ \int_{0}^{\tau_{0}^{\prime n}}e^{-rs}h(s,L_{s})\,ds+ O_{\tau_{0}^{\prime n}}^{n}(0,0)\right],
\end{equation}
where
\begin{eqnarray*}
O_{\tau_{0}^{\prime n}}^{n}(0,0) &=&\E \left[ \int_{\tau_{0}^{\prime n}}^{\tau_{0}^{\prime n}+\Delta}e^{-rs}h(s,L_{s})\,ds| \Fc_{\tau_{0}^{\prime n}} \right]\\ & +& \max \limits_{\beta \in U}\left\lbrace \E \left[ -e^{-r(\tau_{0}^{\prime n}+\Delta)}\psi(\beta)+ Y_{\tau_{0}^{ \prime n}+\Delta}^{n-1}(\tau_{0}^{\prime n},\beta)|\Fc_{\tau_{0}^{\prime n}}\right]\right\rbrace
\end{eqnarray*}
since
$Y_{\tau_{0}^{ \prime n}}^{n-1}(0,\beta)=Y_{\tau_{0}^{ \prime n}}^{n-1}(\tau_{0}^{\prime n},\beta)$ for any
$\beta \in U$. Next, by \eqref{determinynn} we have
\begin{align}\label{inside2}
&\nn \E \left[ -
e^{-r(\tau_{0}^{\prime n}+\Delta)}\psi(\beta_{0}^{\prime n})+  Y_{\tau_{0}^{ \prime n}+\Delta}^{n-1}(\tau_{0}^{\prime n},\beta_{0}^{\prime n})|\Fc_{\tau_{0}^{\prime n}}\right]\\\nn &\qquad =\sum_{\theta\in U }
\1_{\{\beta_{0}^{\prime n}=\theta\}}\E \left[ -
e^{-r(\tau_{0}^{\prime n}+\Delta)}\psi(\theta)+  Y_{\tau_{0}^{ \prime n}+\Delta}^{n-1}(\tau_{0}^{\prime n},\theta)|\Fc_{\tau_{0}^{\prime n}}\right]\\\nn &\qquad \le \sum_{\theta\in U }\1_{\{\beta_{0}^{\prime n}=\theta\}}
\max \limits_{\beta \in U}\left\lbrace \E \left[ -e^{-r(\tau_{0}^{\prime n}+\Delta)}\psi(\beta)+ Y_{\tau_{0}^{ \prime n}+\Delta}^{n-1}(\tau_{0}^{\prime n},\beta)|\Fc_{\tau_{0}^{\prime n}}\right]\right\rbrace\\
& \qquad =\max \limits_{\beta \in U}\left\lbrace \E \left[ -e^{-r(\tau_{0}^{\prime n}+\Delta)}\psi(\beta)+ Y_{\tau_{0}^{ \prime n}+\Delta}^{n-1}(\tau_{0}^{\prime n},\beta)|\Fc_{\tau_{0}^{\prime n}}\right]\right\rbrace.
\end{align}
Therefore
\begin{eqnarray*}
O_{\tau_{0}^{\prime n}}^{n}(0,0) \ge \E \left[ \int_{\tau_{0}^{\prime n}}^{\tau_{0}^{\prime n}+\Delta} e^{-rs}h(s,L_{s})ds -
e^{-r(\tau_{0}^{\prime n}+\Delta)}\psi(\beta_{0}^{\prime n})+  Y_{\tau_{0}^{ \prime n}}^{n-1}(\tau_{0}^{\prime n},\beta_{0}^{\prime n})|\Fc_{\tau_{0}^{\prime n}}\right]
\end{eqnarray*}
and then
 \begin{eqnarray*}
 Y_{0}^{n}(0,0)\geq \E \left[ \Int_{0}^{\tau_{0}^{\prime n+\Delta}}e^{-rs}h(s,L_{s})\,ds  -
e^{-r(\tau_{0}^{\prime n}+\Delta)}\psi(\beta_{0}^{\prime n})+  Y_{\tau_{0}^{\prime n}+\Delta}^{n-1}(\tau_{0}^{\prime n},\beta_{0}^{\prime n})\right].
\end{eqnarray*}
On the other hand, we have
\begin{eqnarray*}
Y_{\tau_{0}^{\prime n}+\Delta}^{n-1}(\tau_{0}^{\prime n},\beta_{0}^{\prime n})&=& \esssup \limits_{\tau \ge \tau_{0}^{\prime n}+\Delta } \E \left[ \int_{\tau_{0}^{\prime n}+\Delta}^{\tau}e^{-rs}h(s,L_{s}+\beta_{0}^{\prime n})\,ds + O^{n-1}_{\tau}(\tau_{0}^{\prime n},\beta_{0}^{\prime n})|\Fc_{\tau_{0}^{\prime n}+\Delta}\right] \\
& \geq & \E \left[ \int_{\tau_{0}^{\prime n}+\Delta}^{\tau_{1}^{\prime n}}e^{-rs}h(s,L_{s}+\beta_{0}^{\prime n})\,ds + O^{n-1}_{\tau_{1}^{\prime n}}(\tau_{0}^{\prime n},\beta_{0}^{\prime n})|\Fc_{\tau_{0}^{\prime n}+\Delta}\right] \\
 & \geq & \E \left[ \int_{\tau_{0}^{\prime n}+\Delta}^{\tau_{1}^{\prime n}+\Delta}e^{-rs}h(s,L_{s}+\beta_{0}^{\prime n})\,ds  -
e^{-r(\tau_{1}^{\prime n}+\Delta)}\psi(\beta_{1}^{\prime n})+  Y_{\tau_{1}^{\prime n}+\Delta}^{n-2}(\tau_{1}^{\prime n},\beta_{0}^{\prime n}+ \beta_{1}^{\prime n})|\Fc_{\tau_{0}^{\prime n}+\Delta}\right].
\end{eqnarray*}
This yields that
\begin{eqnarray*}
 Y_{0}^{n}(0,0) & \geq & \E \bigg\lbrack \Int_{0}^{\tau_{0}^{\prime n}+\Delta}e^{-rs}h(s,L_{s})\,ds + \int_{\tau_{0}^{\prime n}+\Delta}^{\tau_{1}^{\prime n}+\Delta}e^{-rs}h(s,L_{s}+\beta_{0}^{\prime n})\,ds \\
 &-& e^{-r(\tau_{0}^{\prime n}+\Delta)}\psi(\beta_{0}^{\prime n}) - e^{-r(\tau_{1}^{\prime n}+\Delta)}\psi(\beta_{1}^{\prime n})+  Y_{\tau_{1}^{\prime n}+\Delta}^{n-2}(\tau_{1}^{\prime n},\beta_{0}^{\prime n}+ \beta_{1}^{\prime n}) \bigg\rbrack.
\end{eqnarray*}
Repeat this reasoning as many times as necessary, we obtain
\begin{eqnarray*}
Y_{0}^{n}(0,0) & \geq & \E \big[ \Int_{0}^{\tau_{0}^{\prime n}+\Delta}e^{-rs}h(s,L_{s})\,ds +\sum_{1\leq k \leq n-1} \int_{\tau_{k-1}^{\prime n}+\Delta}^{\tau_{k}^{\prime n}+\Delta}e^{-rs}h(s,L_{s}+\beta_{0}^{\prime n}+\cdots + \beta_{k-1}^{\prime n} )\,ds \\  & + &  \int_{\tau_{n-1}^{\prime n}+\Delta}^{ + \infty }e^{-rs}h(s,L_{s}+\beta_{0}^{\prime n}+\cdots + \beta_{n-1}^{\prime n} )\,ds  - \sum_{k=0}^{n-1} e^{-r(\tau_{k}^{\prime n}+\Delta)}\psi (\beta_{k}^{\prime n})\big]
\\ &=& J(\delta_{n}^{\prime}).
\end{eqnarray*}
Hence,
$$ Y_{0}^{n}(0,0)= J(\delta_{n}^{*})\geq J(\delta_{n}^{\prime}), $$
which implies that strategy $\delta_{n}^{*} $ is optimal .
\end{proof}
\section{Impulse control problem in the general case }
In this section we consider the case when the number of
interventions is not limited, i.e., the controller can intervene as
many times as she wishes. In this case, existence of the optimal
control over all admissible strategies, heavily relies on the
continuity of the limiting process $(Y_{t}(\nu,\xi))_{t\geq 0}$
which is a crucial property of the value function.
\begin{Proposition}\label{Proposition_cont} The process $(Y_{t}(\nu,\xi))_{t\geq 0}$ given by \eqref{limit} is continuous.
\end{Proposition}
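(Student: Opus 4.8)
The plan is to strengthen the monotone pointwise convergence $Y^{n}(\nu,\xi)\nearrow Y(\nu,\xi)$ obtained in Proposition \ref{Proposition2} into a convergence that is \emph{uniform} in $(\omega,t)$ on $\Omega\times[0,\infty)$. Since every $Y^{n}(\nu,\xi)$ belongs to $\Sc^{2}_{c}$ and is therefore continuous, a uniform limit of such processes is continuous, which is exactly the assertion. The main obstacle is that an increasing limit of continuous (even of continuous \cdlg) processes need not be continuous --- this is precisely what makes continuity of the value function delicate here --- and the device that resolves it is the execution delay $\Delta>0$: in the recursion \eqref{oordn} the obstacle $O^{n+1}$ is built from $Y^{n}$ evaluated at the \emph{shifted} instant $t+\Delta$, so comparing $Y^{n+1}$ with $Y^{n}$ costs an extra discount factor $e^{-r\Delta}<1$, and this will produce a geometric bound on the increments $Y^{n+1}-Y^{n}$.

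Concretely, I would fix $\nu$ and, for $n\ge 1$, introduce
$$a_{n}:=\sup_{\eta}\ \Big\|\,\sup_{t\ge 0}\,e^{rt}\big(Y^{n}_{t}(\nu,\eta)-Y^{n-1}_{t}(\nu,\eta)\big)\,\Big\|_{L^{\infty}(\Omega)},$$
the outer supremum running over all $\Fc_{\nu}$-measurable $\eta$ with $\text{card}\,\eta(\Omega)<\infty$. By \eqref{compyn} and \eqref{bornee} the bracketed quantity lies in $[0,\gamma/r]$, hence $a_{n}\le\gamma/r<\infty$ for every $n$, and in particular $a_{1}\le\gamma/r$. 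The heart of the argument is the recursion $a_{n+1}\le e^{-r\Delta}a_{n}$. To obtain it, take an admissible $\eta$ and any stopping time $\rho$, subtract the expressions \eqref{oordn}ii) for $O^{n+1}_{\rho}(\nu,\eta)$ and $O^{n}_{\rho}(\nu,\eta)$ --- whose first ($h$-)terms coincide --- and use $\max_{\beta}x_{\beta}-\max_{\beta}y_{\beta}\le\max_{\beta}(x_{\beta}-y_{\beta})$ to get
$$O^{n+1}_{\rho}(\nu,\eta)-O^{n}_{\rho}(\nu,\eta)\ \le\ \max_{\beta\in U}\E\big[\,Y^{n}_{\rho+\Delta}(\nu,\eta+\beta)-Y^{n-1}_{\rho+\Delta}(\nu,\eta+\beta)\,\big|\,\Fc_{\rho}\big]\ \le\ a_{n}\,e^{-r\Delta}\,e^{-r\rho},$$
where one uses that each $\eta+\beta$ is again admissible (the class over which $a_{n}$ is taken is stable under $\eta\mapsto\eta+\beta$, which is what makes the induction close) and that $e^{-r\rho}$ is $\Fc_{\rho}$-measurable. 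Plugging this into the Snell characterisation \eqref{ordrenn} of $Y^{n+1}$ and $Y^{n}$, together with $\esssup_{\rho}f_{\rho}-\esssup_{\rho}g_{\rho}\le\esssup_{\rho}(f_{\rho}-g_{\rho})$ and with $\esssup_{\rho\in\Tc_{t}}\E[e^{-r\rho}\mid\Fc_{t}]=e^{-rt}$ (the bound $e^{-r\rho}\le e^{-rt}$ being attained at $\rho\equiv t$), yields
$$0\ \le\ Y^{n+1}_{t}(\nu,\eta)-Y^{n}_{t}(\nu,\eta)\ \le\ \esssup_{\rho\in\Tc_{t}}\E\big[\,O^{n+1}_{\rho}(\nu,\eta)-O^{n}_{\rho}(\nu,\eta)\,\big|\,\Fc_{t}\big]\ \le\ a_{n}\,e^{-r\Delta}\,e^{-rt}$$
for every $t\ge 0$, uniformly in $\omega$ and in the admissible $\eta$; hence $a_{n+1}\le e^{-r\Delta}a_{n}$ and therefore $a_{n}\le(\gamma/r)\,e^{-(n-1)r\Delta}$.

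It then follows that, for every $t\ge 0$ and $\P$-a.s.,
$$0\ \le\ Y_{t}(\nu,\xi)-Y^{n}_{t}(\nu,\xi)\ =\ \sum_{m>n}\big(Y^{m}_{t}(\nu,\xi)-Y^{m-1}_{t}(\nu,\xi)\big)\ \le\ e^{-rt}\sum_{m>n}a_{m}\ \le\ \frac{\gamma}{r}\,\frac{e^{-nr\Delta}}{1-e^{-r\Delta}},$$
which tends to $0$ as $n\to\infty$ uniformly in $(\omega,t)$. Thus $Y^{n}(\nu,\xi)\to Y(\nu,\xi)$ uniformly, and as each $Y^{n}(\nu,\xi)$ is continuous, so is $(Y_{t}(\nu,\xi))_{t\ge 0}$. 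The remaining points are routine: the stability of the admissible class under $\eta\mapsto\eta+\beta$ and the manipulations of $\esssup$ over $\Tc_{t}$ and of $\max_{\beta\in U}$ under conditional expectation, which rely on the finiteness of $U$ and on \eqref{ordrenn}. An alternative would be to show directly that the obstacle $O(\nu,\xi)$ of \eqref{limito} is continuous and then invoke that the Snell envelope of a continuous obstacle is continuous; but proving continuity of $O(\nu,\xi)$ appears to require the same delay-contraction input, so the route above is the more economical one.
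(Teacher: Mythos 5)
Your argument is correct, and it takes a genuinely different route from the paper. The paper argues by contradiction on the limit process: it takes a predictable time $T$ at which the c\`adl\`ag limit $Y(\nu,\xi)$ would have a negative jump, uses the Snell-envelope jump properties (Part (I)-iii) of the Appendix) to transfer that jump to the obstacle $O(\nu,\xi)$, and then --- because the obstacle is built from $Y(\nu,\xi+\beta)$ evaluated at $T+\Delta$ --- propagates the jump to the times $T+n\Delta$ along suitably selected $\Fc_{T+k\Delta}$-measurable impulses $\beta_k$ (constructed via \eqref{determiny}); since $O_t(\nu,\cdot)\to 0$ uniformly as $t\to\infty$, the original jump must vanish. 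You instead quantify the convergence of the scheme itself: the delay shifts the dependence of $O^{n+1}$ on the previous iterate to time $t+\Delta$, which costs a factor $e^{-r\Delta}<1$ in the weighted norm $\sup_\eta\bigl\|\sup_t e^{rt}(\cdot)\bigr\|_{L^\infty}$, yielding $a_{n+1}\le e^{-r\Delta}a_n$ and hence uniform geometric convergence of the continuous $Y^n$. Both proofs rest on the same two ingredients --- the strict delay $\Delta>0$ combined with discounting --- but yours buys more: an explicit rate $O(e^{-nr\Delta})$, continuity of the limit obstacle $O(\nu,\xi)$ as a byproduct (uniform limit of the continuous $O^n$), independence from the c\`adl\`ag preliminaries of Proposition \ref{Proposition2}, and avoidance of the delicate jump bookkeeping. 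The steps you defer are indeed routine: the class of finite $\Fc_\nu$-measurable $\eta$ is stable under $\eta\mapsto\eta+\beta$ for constant $\beta\in U$; the first terms of $O^{n+1}_\rho$ and $O^n_\rho$ cancel and $\max_\beta x_\beta-\max_\beta y_\beta\le\max_\beta(x_\beta-y_\beta)$; the evaluation of the obstacles at stopping times is licensed by the optional projection property of Part (II); and $\esssup_{\rho\in\Tc_t}\E[e^{-r\rho}\mid\Fc_t]=e^{-rt}$. A cosmetic simplification: by \eqref{determinynn} you could restrict the supremum defining $a_n$ to constant $\eta$ in the countable semigroup generated by $U$, which is all the induction ever invokes. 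One caveat on scope: the paper reuses its jump-propagation argument essentially verbatim for the risk-sensitive (multiplicative) payoff of Section 7, where a linear contraction of differences is less immediate, so the paper's route is the more robust of the two even if yours is the more economical here.
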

\begin{proof}
First, note that the process $(O_{t}(\nu,\xi))_{t \geq 0}$ is c\`adl\`ag since $Y(\nu,\xi)$ is so by (i) of Proposition \ref{Proposition2} and Appendix, Part (II). Next, let $T$ be a predictable stopping time such that $\Delta_T Y(\nu,\xi):=Y_T(\nu,\xi)-Y_{T^{-}}(\nu,\xi)<0$. By Part (I)-iii) of the Appendix, the process
$(O_t(\nu,\xi))_{t\geq 0}$ has a negative jump at $T$ and
$O_{T^{-}}(\nu,\xi)=Y_{T^{-}}(\nu,\xi)$. We then have:
\begin{eqnarray*}
O_{T^{-}}(\nu,\xi)- O_{T}(\nu,\xi)&= & \max_{\beta \in U} \bigg\lbrace \E \bigg\lbrack -e^{-r(T+\Delta)}\psi(\beta)+Y_{(T+\Delta)^{-}}(\nu,\xi+\beta)|\Fc_{T}\bigg\rbrack \bigg\rbrace \\ & - & \max_{\beta \in U} \bigg\lbrace \E \bigg\lbrack -e^{-r(T+\Delta)}\psi(\beta)+Y_{T+\Delta}(\nu,\xi+\beta)|\Fc_{T}\bigg\rbrack \bigg\rbrace  \\
&\leq &
\max_{\beta \in U} \bigg\lbrace \E\bigg\lbrack Y_{(T+\Delta)^{-}}(\nu,\xi+\beta)-Y_{T+\Delta}(\nu,\xi+\beta)|\Fc_{T}\bigg\rbrack \bigg\rbrace \\   &= &
 \max_{\beta \in U} \bigg\lbrace \E \bigg\lbrack \1_{A_{T+\Delta}(\xi+\beta)}\bigg\lbrace Y_{(T+\Delta)^{-}}(\nu,\xi+\beta)-Y_{T+\Delta}(\nu,\xi+\beta)\bigg\rbrace |\Fc_{T} \bigg\rbrack,
\end{eqnarray*}
where for any predictable stopping time $T\geq \nu$ and $\xi$ an  $\Fc_\nu$-measurable r.v.,\\ $A_T(\xi):=\{\omega \in \Omega, \Delta_TY(\nu,\xi)<0\}$ which belongs to $\mathcal{F}_T$. Thus
\begin{eqnarray}\label{contx1} \nonumber
\1_{A_{T}(\xi)}\{O_{T^{-}}(\nu,\xi)- O_{T}(\nu,\xi)\} &\leq &
 \max_{\beta \in U} \bigg\lbrace \E \bigg\lbrack \1_{A_{T}(\xi)}\times \1_{A_{T+\Delta}(\xi+\beta)} \bigg\lbrace Y_{(T+\Delta)^{-}}(\nu,\xi+\beta) \\ &-&Y_{T+\Delta}(\nu,\xi+\beta) \bigg\rbrace |\Fc_{T} \bigg\rbrack .
\end{eqnarray}
We note that there exists at least one $\beta \in U$ such that the right-hand side is positive. Otherwise the left-hand side is null and this is contradictory.
Since $Y_{T+\Delta}(\nu,\xi+\beta)\geq O_{T+\Delta}(\nu,\xi+\beta)$ and on the set $A_{T+\Delta}(\xi+\beta)$ it holds that $Y_{(T+\Delta)^{-}}(\nu,\xi+\beta)=O_{(T+\Delta)^{-}}(\nu,\xi+\beta)$, (\ref{contx1}) implies
\begin{eqnarray*}
\1_{A_{T}(\xi)}\{O_{T^{-}}(\nu,\xi)- O_{T}(\nu,\xi)\} &\leq &
 \max_{\beta \in U} \bigg\lbrace \E \bigg\lbrack \1_{A_{T}(\xi)}\times \1_{A_{T+\Delta}(\xi+\beta)} \bigg\lbrace O_{(T+\Delta)^{-}}(\nu,\xi+\beta) \\ &-&O_{T+\Delta}(\nu,\xi+\beta) \bigg\rbrace |\Fc_{T} \bigg\rbrack \\
 &\leq & \E \bigg\lbrack \1_{A_{T}(\xi)}\times  \max_{\beta \in U}\bigg\lbrace \E \bigg\lbrack \1_{A_{T+\Delta}(\xi+\beta)} \bigg\lbrace O_{(T+\Delta)^{-}}(\nu,\xi+\beta) \\ &-&O_{T+\Delta}(\nu,\xi+\beta) \bigg\rbrace|\Fc_{T+\Delta} \bigg\rbrack \bigg\rbrace |\Fc_{T} \bigg\rbrack \\
 &\leq & \E \bigg\lbrack \1_{A_{T}(\xi)}\times  \E \bigg\lbrack \1_{A_{T+\Delta}(\xi+\beta_{1})} \bigg\lbrace O_{(T+\Delta)^{-}}(\nu,\xi+\beta_{1}) \\ &-&O_{T+\Delta}(\nu,\xi+\beta_{1}) \bigg\rbrace|\Fc_{T+\Delta} \bigg\rbrack |\Fc_{T} \bigg\rbrack,
\end{eqnarray*}
where $\beta_1$ is a r.v. $\Fc_{T+\Delta}$-measurable valued in $U$. The r.v. $\beta_1$ can be constructed in the following way.
For $i=1,\ldots,p$, let $\Bc_i$ be the set,
\begin{align}
&\nn \Bc_i:=\bigg\lbrace
\max_{\beta \in U}  \E \bigg\lbrack \1_{A_{T+\Delta}(\xi+\beta)} \bigg\lbrace O_{(T+\Delta)^{-}}(\nu,\xi+\beta) -O_{T+\Delta}(\nu,\xi+\beta) \bigg\rbrace |\Fc_{T+\Delta} \bigg\rbrack
\\\nn &\qquad \qquad \qquad \qquad=
\E \bigg\lbrack \1_{A_{T+\Delta}(\xi+\theta_i)} \bigg\lbrace O_{(T+\Delta)^{-}}(\nu,\xi+\theta_i) -O_{T+\Delta}(\nu,\xi+\theta_i) \bigg\rbrace |\Fc_{T+\Delta} \bigg\rbrack \bigg\rbrace \end{align}
We now define $\beta_1$ as
\begin{equation}\label{inside3}
\beta_1=\theta_1 \mbox{ on } \Bc_1 \text{ and  }
\beta_1=\theta_j \mbox{ on
}\Bc_j\backslash\bigcup_{k=1}^{j-1}\Bc_k \text{ for }j=2,\ldots,p.
\end{equation}
Therefore,
\begin{align}
&\nn \max_{\beta \in U}  \E \bigg\lbrack \1_{A_{T+\Delta}(\xi+\beta)} \bigg\lbrace O_{(T+\Delta)^{-}}(\nu,\xi+\beta) -O_{T+\Delta}(\nu,\xi+\beta) \bigg\rbrace |\Fc_{T+\Delta} \bigg\rbrack \\&\nn \qquad\qquad
=\sum_{i=1,p}1_{\{\beta_1=\theta_i\}}
\max_{\beta \in U}  \E \bigg\lbrack \1_{A_{T+\Delta}(\xi+\beta)} \bigg\lbrace O_{(T+\Delta)^{-}}(\nu,\xi+\beta) -O_{T+\Delta}(\nu,\xi+\beta) \bigg\rbrace |\Fc_{T+\Delta} \bigg\rbrack\\&\nn \qquad\qquad
=\sum_{i=1,p}1_{\{\beta_1=\theta_i\}}\E \bigg\lbrack \1_{A_{T+\Delta}(\xi+\theta_i)} \bigg\lbrace O_{(T+\Delta)^{-}}(\nu,\xi+\theta_i) -O_{T+\Delta}(\nu,\xi+\theta_i) \bigg\rbrace |\Fc_{T+\Delta} \bigg\rbrack \\&\nn \qquad\qquad
=\E \bigg\lbrack\sum_{i=1,p}1_{\{\beta_1=\theta_i\}} \1_{A_{T+\Delta}(\xi+\theta_i)} \bigg\lbrace O_{(T+\Delta)^{-}}(\nu,\xi+\theta_i) -O_{T+\Delta}(\nu,\xi+\theta_i) \bigg\rbrace |\Fc_{T+\Delta} \bigg\rbrack\\&\nn \qquad\qquad
=\E \bigg\lbrack\sum_{i=1,p}1_{\{\beta_1=\theta_i\}} \1_{A_{T+\Delta}(\xi+\beta_1)} \bigg\lbrace O_{(T+\Delta)^{-}}(\nu,\xi+\beta_1) -O_{T+\Delta}(\nu,\xi+\beta_1) \bigg\rbrace |\Fc_{T+\Delta} \bigg\rbrack
\end{align}
since $A_{T+\Delta}(\xi+\theta_i)=\{Y_{(T+\Delta)-}(\nu,\xi+\theta_i)- Y_{(T+\Delta)-}(\nu, \xi+\theta_i)>0\}$ and by \eqref{determiny} on $\beta_1=\theta_i$, $Y_{(T+\Delta)-}(\nu,\xi+\theta_i)=Y_{(T+\Delta)-}(\nu,\xi+\beta_1)$,
$Y_{(T+\Delta)}(\nu,\xi+\theta_i)=Y_{(T+\Delta)}(\nu,\xi+\beta_1)$ and the same is valid for $O_{(T+\Delta)^{-}}(\nu,\xi+\theta_i)$ and $O_{T+\Delta}(\nu,\xi+\theta_i)$. Therefore
\begin{align}
&\nn \max_{\beta \in U}  \E \bigg\lbrack \1_{A_{T+\Delta}(\xi+\beta)} \bigg\lbrace O_{(T+\Delta)^{-}}(\nu,\xi+\beta) -O_{T+\Delta}(\nu,\xi+\beta) \bigg\rbrace |\Fc_{T+\Delta} \bigg\rbrack\\&\nn  \qquad\qquad
=\E \bigg\lbrack\1_{A_{T+\Delta}(\xi+\beta_1)} \bigg\lbrace O_{(T+\Delta)^{-}}(\nu,\xi+\beta_1) -O_{T+\Delta}(\nu,\xi+\beta_1) \bigg\rbrace |\Fc_{T+\Delta} \bigg\rbrack .
\end{align}
Repeating this reasoning $n$ times yields
\begin{eqnarray*}\label{cont3}
\1_{A_{T}(\xi)}\{O_{T^{-}}(\nu,\xi)& -& O_{T}(\nu,\xi)\} \leq
 \E \bigg\lbrack \1_{A_{T}(\xi)}\bigg\lbrace \prod_{k=1}^{k=n} 1_{A_{T+k\Delta}(\xi+\beta_1+ \cdots +\beta_k)} \\ &\times &\bigg( O_{(T+n\Delta)^{-}}(\nu,\xi+\beta_1+ \cdots +\beta_n)-O_{T+n\Delta}(\nu,\xi+\beta_1+ \cdots +\beta_n)\bigg) \bigg\rbrace |\Fc_{T}\bigg\rbrack,
\end{eqnarray*}
where the random variables $\beta_{k}$ are valued in $U$ and $\Fc_{T+k\Delta}$-measurable.
But the left-hand side converges to $0$, $\P$-a.s. when $n\rightarrow +\infty$. Indeed, by using (\ref{bornee}) for any $\nu$ and $\xi\in \Fc_\nu$, we have
\begin{equation}
 |O_t(\nu,\xi)|\leq  \frac{\gamma}{r}\{e^{-rt}-e^{-r(t+\Delta)}\}
+\|\psi\|e^{-r(t+\Delta)}+\frac{\gamma}{r} e^{-r(t+\Delta)}, \quad \forall \, t\geq 0,
\end{equation}
and then $\underset{t\to \infty}{\lim}O_t(\nu,\xi)=0$ uniformly with respect to $\nu$ and $\xi$. Thus
$$
\1_{A_{T}(\xi)}\{O_{T^{-}}(\nu,\xi)-O_{T}(\nu,\xi)\}=0,
$$
which is absurd. Hence, the process $Y(\nu,\xi)$ is continuous.
\end{proof}
\begin{Remark} Since the process $Y(\nu,\xi)$ is continuous and satisfies \eqref{limit}, then there exist processes
$Z(\nu,\xi)$ and $K(\nu,\xi))$ which belong respectively to $\Hc^{2,d}$ and $\mathcal{S}_{i}^{2}$ such that the triple $(Y(\nu,\xi),Z(\nu,\xi),K(\nu,\xi))$ satisfies the following reflected BSDE: $\forall t\ge 0$,
$$\left\{
\begin{array}{l}
 Y_{t}(\nu,\xi)=\int_{t}^{\infty}e^{-rs}h(s,L_{s}+\xi)\1_{[s\geq \nu]}\,ds+K_{\infty}(\nu,\xi)-K_{t}(\nu,\xi)-\int_{t}^{\infty}Z_{s}(\nu,\xi)\,dB_{s} \,;
\\
Y_{t}(\nu,\xi)  \geq  O_{t}(\nu,\xi) \text{ and }\int_{0}^{\infty}(Y_{t}(\nu,\xi)-O_{t}(\nu,\xi))\,dK_{t}(\nu,\xi)=0\nonumber
\end{array}\right.$$
where the process $O(\nu,\xi)$ is given by \eqref{limito}.\qed
\end{Remark}
We now give the main result of this section.
\begin{Theorem}\label{Theorem_cont} Let us assume that Assumption \ref{assumpt} hold and let us define the  strategy $\delta^{*}=(\tau_{n}^{*},\beta_{n}^{*})_{n\geq 0}$  by
$$\tau_{0}^{*}= \begin{cases}\inf\{ s\in[0,\infty), O_{s}(0,0)\geq Y_{s}(0,0)\},\\  + \infty, \quad \text{otherwise} \end{cases}$$ \\
and $\beta_{0}^{*}$ an $\mathcal{F}_{\tau_{0}^{*}}$-r.v. such that
$$ O_{\tau_{0}^{*}}(0,0):= \E\left[\Int_{\tau_{0}^{*}}^{\tau_{0}^{*}+\Delta}  e^{-rs}  h(s,L_{s})ds- e^{-r(\tau_{0}^{*}+\Delta)}\psi(\beta_{0}^{*})+Y_{\tau_{0}^{*}+\Delta}(\tau_{0}^{*},\beta_{0}^{*})|\mathcal{F}_{\tau_{0}^{*}}\right].$$
For any $n \geq 1$,
 $$ \tau_{n}^{*}=\inf\bigg\lbrace s\geq \tau_{n-1}^{*}+\Delta , O_{s}(\tau_{n-1}^{*},\beta_{0}^{*}+\cdots +\beta_{n-1}^{*} )\geq Y_{s}(\tau_{n-1}^{*},\beta_{0}^{*}+\cdots +\beta_{n-1}^{*} )\bigg\rbrace,$$
 and $\beta_{n}^{*}$ an $U$-valued $\mathcal{F}_{\tau_{n}^{*}}$-measurable r.v. such that
 \begin{eqnarray*}
 O_{\tau_{n}^{*}}(\tau_{n-1}^{*},\beta_{0}^{*}+\cdots +\beta_{n-1}^{*} )&=&\E \bigg\lbrack \Int_{\tau_{n}^{*}}^{\tau_{n}^{*}+\Delta}  e^{-rs}  h(s,L_{s}+\beta_{0}^{*}+\cdots +\beta_{n-1}^{*} )ds \\ &-& e^{-r(\tau_{n}^{*}+\Delta)}\psi(\beta_{n}^{*})+Y_{\tau_{n}^{*}+\Delta}(\tau_{n}^{*},\beta_{0}^{*}+\cdots +\beta_{n-1}^{*} +\beta_{n}^{*})|\mathcal{F}_{\tau_{n}^{*}}\bigg\rbrack.
 \end{eqnarray*}
 Then, the strategy $\delta^{*}=(\tau_{n}^{*},\beta_{n}^{*})_{n\geq 0}$ is optimal for the impulse control problem, i.e.,
 $$ Y_{0}(0,0)=\Sup_{\delta \in \mathcal{A}}J(\delta)=J(\delta^{*}).$$
\end{Theorem}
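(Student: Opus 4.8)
The strategy $\delta^*$ defined in the statement is the natural "limit" of the finite-horizon strategies $\delta_n^*$ constructed in Proposition~\ref{53}; the key new ingredient that makes the argument go through is the continuity of $Y(\nu,\xi)$ established in Proposition~\ref{Proposition_cont}. The plan is to prove the two inequalities $Y_0(0,0)\le J(\delta^*)$ and $Y_0(0,0)\ge J(\delta)$ for every $\delta\in\Ac$ separately, mimicking the two halves of the proof of Proposition~\ref{53} but now with $n=+\infty$, i.e.\ unrolling the dynamic-programming recursion \eqref{limit}--\eqref{limito} infinitely many times and controlling the remainder term.

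\textbf{Step 1: $\delta^*$ is admissible.} I would first check that $\delta^*=(\tau_n^*,\beta_n^*)_{n\ge 0}$ belongs to $\Ac$: the $\tau_n^*$ are stopping times with $\tau_{n+1}^*\ge\tau_n^*+\Delta$ by construction, the $\beta_n^*$ are $\Fc_{\tau_n^*}$-measurable and $U$-valued (constructed by the same partition-of-$\Omega$ device as $\beta_0^n$ in Proposition~\ref{53}, using \eqref{determiny} to justify the conditional maximisation), and since $\tau_{n+1}^*\ge\tau_n^*+\Delta$ we get $\tau_n^*\ge n\Delta\to+\infty$, so $\lim_n\tau_n^*=+\infty$ $\P$-a.s. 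Using continuity of $Y(0,0)$ and of $O(0,0)$ on $[0,\infty]$ (with $O_\infty(0,0)=0$), the representation \eqref{limit} together with the Snell-envelope optimality criterion (Appendix Part~(I)) shows that $\tau_0^*$ is optimal after $0$, hence
\[
Y_0(0,0)=\E\Big[\int_0^{\tau_0^*}e^{-rs}h(s,L_s)\,ds+O_{\tau_0^*}(0,0)\Big],
\]
and likewise $\tau_n^*$ is optimal after $\tau_{n-1}^*+\Delta$ for the Snell envelope defining $Y(\tau_{n-1}^*,\beta_0^*+\cdots+\beta_{n-1}^*)$.

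\textbf{Step 2: $Y_0(0,0)=J(\delta^*)$.} Exactly as in \eqref{Yn}--\eqref{eq319}, but now in the infinite-horizon recursion, I would substitute the definition of $O_{\tau_0^*}(0,0)$, use the optimality of $\tau_1^*$ to expand $Y_{\tau_0^*+\Delta}(\tau_0^*,\beta_0^*)$, and iterate. After $N$ steps this gives
\begin{align*}
Y_0(0,0)=\E\Big[&\int_0^{\tau_0^*+\Delta}e^{-rs}h(s,L_s)\,ds
+\sum_{k=1}^{N-1}\int_{\tau_{k-1}^*+\Delta}^{\tau_k^*+\Delta}e^{-rs}h(s,L_s+\textstyle\sum_{j<k}\beta_j^*)\,ds\\
&-\sum_{k=0}^{N-1}e^{-r(\tau_k^*+\Delta)}\psi(\beta_k^*)
+Y_{\tau_{N-1}^*+\Delta}(\tau_{N-1}^*,\textstyle\sum_{j<N}\beta_j^*)\Big].
\end{align*}
The point is then to let $N\to\infty$: the first two groups of terms converge (monotone/dominated convergence, using $0\le h\le\gamma$ and the discount factor) to the running-reward part of $J(\delta^*)$ and the cost sum converges because $\psi$ is bounded on $U$ and $\sum_k e^{-r(\tau_k^*+\Delta)}\le\sum_k e^{-r(k\Delta+\Delta)}<\infty$; finally the remainder $Y_{\tau_{N-1}^*+\Delta}(\tau_{N-1}^*,\cdot)$ tends to $0$ because by \eqref{BorneY} it is bounded by $\tfrac{\gamma}{r}e^{-r(\tau_{N-1}^*+\Delta)}\le\tfrac{\gamma}{r}e^{-r N\Delta}\to 0$. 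This yields $Y_0(0,0)=J(\delta^*)$.

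\textbf{Step 3: optimality over $\Ac$.} For an arbitrary $\delta=(\tau_n,\xi_n)_{n\ge 0}\in\Ac$ I would run the same unrolling with inequalities instead of equalities: the Snell-envelope property gives $Y_0(0,0)\ge\E\big[\int_0^{\tau_0}e^{-rs}h(s,L_s)\,ds+O_{\tau_0}(0,0)\big]$, then bound $O_{\tau_0}(0,0)$ below by plugging $\beta=\xi_1$ into the maximum (using \eqref{determiny} as in \eqref{inside2}), expand $Y_{\tau_0+\Delta}(\tau_0,\xi_1)\ge\E[\ldots+O_{\tau_1}(\tau_0,\xi_1)\mid\cdot]$, and iterate $N$ times. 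Dropping the non-negative remainder term $Y_{\tau_{N-1}+\Delta}(\tau_{N-1},\cdot)\ge 0$ and letting $N\to\infty$ (the tail of the running integral is controlled by $\int_{\tau_{N-1}+\Delta}^\infty\gamma e^{-rs}ds\to 0$ since $\tau_{N-1}\to\infty$, and one recovers the full integral $\int_0^\infty e^{-rs}h(s,L_s^\delta)ds$) gives $Y_0(0,0)\ge J(\delta)$. Combining with Step~2 yields $Y_0(0,0)=\sup_{\delta\in\Ac}J(\delta)=J(\delta^*)$.

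\textbf{Main obstacle.} The delicate point is the passage to the limit $N\to\infty$ in Steps~2 and~3: one must argue that the infinitely-many-times-unrolled recursion actually converges to $J$, which requires (a) the uniform exponential bound \eqref{BorneY} on $Y(\nu,\xi)$ to kill the remainder, (b) $\tau_n^*\ge n\Delta$ (and $\tau_n\to\infty$ for a general admissible $\delta$) so that the discounting makes the bound on the remainder vanish, and (c) checking that the running-reward integrals reassemble correctly into $\int_0^\infty e^{-rs}h(s,L_s^{\delta})ds$ on the successive intervals $[\tau_{k-1}+\Delta,\tau_k+\Delta)$ where $L^\delta_s=L_s+\beta_0+\cdots+\beta_{k-1}$. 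All three are available from the structure of $\Ac$ and from Proposition~\ref{Proposition2}, but they are exactly where the argument genuinely uses the infinite-horizon setting rather than being a formal copy of Proposition~\ref{53}; everything else is the continuity of $Y(\nu,\xi)$ (Proposition~\ref{Proposition_cont}), which is what licenses the Snell-envelope optimality of the hitting times $\tau_n^*$.
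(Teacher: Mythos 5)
Your proposal is correct and follows essentially the same route as the paper's own proof: establish Snell-envelope optimality of each $\tau_n^*$ from the continuity of $Y(\nu,\xi)$ and $O(\nu,\xi)$, unroll the recursion $N$ times, kill the remainder $Y_{\tau_{N-1}^*+\Delta}(\cdot)$ via the bound \eqref{BorneY} together with $\tau_n^*\ge n\Delta$, and repeat the unrolling with inequalities for an arbitrary admissible strategy. The only cosmetic difference is that you make the admissibility check and the quantitative decay rate $\tfrac{\gamma}{r}e^{-rN\Delta}$ of the remainder explicit, which the paper leaves implicit.
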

\begin{proof}
We first prove that  $ Y_{0}(0,0)= J(\delta^{*}) $.\\

\noindent We have:
\begin{equation}
 Y_{0}(0,0)= \esssup\limits_{\tau \in
\mathcal{T}_0}\E\left[\int_{0}^{\tau} e^{-rs}h(s,L_{s})
ds+  O_{\tau}(0,0)\right].
\end{equation}
Since $Y(\nu,\xi)$ and $(O_t(0,0))_{t\ge 0}$ are continuous on $[0,\infty]$, then,
for any stopping time $\nu$ and any $\mathcal{F}_{\nu}$-measurable
r.v. $\xi$, the stopping time $\tau_{0}^{*}$ is optimal after $0$.
This yields
\begin{equation}\label{equationY0}
 Y_{0}(0,0)= \E\left[\int_{0}^{\tau_{0}^{*}} e^{-rs}h(s,L_{s})
ds+  O_{\tau_{0}^{*}}(0,0)\right]
\end{equation}
where
\begin{eqnarray*}
O_{\tau_{0}^{*}}(0,0) &=&\E\left[ \Int_{\tau_{0}^{*}}^{\tau_{0}^{*}+\Delta}  e^{-rs}  h(s,L_{s})ds|\mathcal{F}_{\tau_{0}^{*}}\right]\\ &+& \max\limits_{\beta\in U} \left\lbrace \E \left[e^{-r(\tau_{0}^{*}+\Delta)}(-\psi(\beta))+Y_{\tau_{0}^{*}+\Delta}(0,\beta)|\mathcal{F}_{\tau_{0}^{n}} \right] \right\rbrace \\
&=& \E\left[\Int_{\tau_{0}^{*}}^{\tau_{0}^{*}+\Delta}  e^{-rs}  h(s,L_{s})ds- e^{-r(\tau_{0}^{*}+\Delta)}\psi(\beta_{0}^{*})+  Y_{\tau_{0}^{*}+\Delta}(\tau_{0}^{*},\beta_{0}^{*})|\mathcal{F}_{\tau_{0}^{*}}\right].
\end{eqnarray*}
Note that the second equality is valid thanks to Proposition
\ref{Proposition2}-ii) since
$Y_{\tau_{0}^{*}+\Delta}(0,\beta)=Y_{\tau_{0}^{*}+\Delta}(\tau_{0}^{*},\beta)$,
for all $\beta \in U$. Combining this with (\ref{equationY0}), we
obtain
\begin{eqnarray*}
 Y_{0}(0,0) &=& \E\left[\int_{0}^{\tau_{0}^{*}} e^{-rs}h(s,L_{s})
ds+ \Int_{\tau_{0}^{*}}^{\tau_{0}^{*}+\Delta}  e^{-rs}  h(s,L_{s})ds  -  e^{-r(\tau_{0}^{*}+\Delta)}\psi(\beta_{0}^{*})+  Y_{\tau_{0}^{*}+\Delta}(\tau_{0}^{*},\beta_{0}^{*})\right] \\
&=& \E\left[\int_{0}^{\tau_{0}^{*}+\Delta} e^{-rs}h(s,L_{s}) ds -  e^{-r(\tau_{0}^{*}+\Delta)}\psi(\beta_{0}^{*})+  Y_{\tau_{0}^{*}+\Delta}(\tau_{0}^{*},\beta_{0}^{*})\right].
\end{eqnarray*}
On the other hand, we have that
$$ Y_{\tau_{0}^{*}+\Delta}(\tau_{0}^{*},\beta_{0}^{*})= \esssup\limits_{\tau \in
\mathcal{T}_{\tau_{0}^{*}+\Delta}}\E\left[\int_{\tau_{0}^{*}+\Delta}^{\tau} e^{-rs}h(s,L_{s}+\beta_{0}^{*})
ds+  O_{\tau}(\tau_{0}^{*},\beta_{0}^{*})|\mathcal{F}_{\tau_{0}^{*}+\Delta}\right]. $$
As the stopping time $\tau_{1}^{*}$ is optimal after $\tau_{0}^{*}+\Delta$, then
\begin{eqnarray*}
Y_{\tau_{0}^{*}+\Delta}(\tau_{0}^{*},\beta_{0}^{*})& = &
\E\left[\int_{\tau_{0}^{*}+\Delta}^{\tau_{1}^{*}} e^{-rs}h(s,L_{s}+\beta_{0}^{*})
ds+  O_{\tau_{1}^{*}}(\tau_{0}^{*},\beta_{0}^{*})|\mathcal{F}_{\tau_{0}^{*}+\Delta}\right] \\
& =& \E\left[\int_{\tau_{0}^{*}+\Delta}^{\tau_{1}^{*}+\Delta} e^{-rs}h(s,L_{s}+\beta_{0}^{*}) ds -  e^{-r(\tau_{1}^{*}+\Delta)}\psi(\beta_{1}^{*})+  Y_{\tau_{1}^{*}+\Delta}(\tau_{1}^{*},\beta_{0}^{*}+\beta_{1}^{*})|\mathcal{F}_{\tau_{0}^{*}+\Delta}\right].
\end{eqnarray*}
We insert this last quantity in the previous one to obtain
\begin{eqnarray*}
 Y_{0}(0,0) &=& \E \bigg\lbrack \int_{0}^{\tau_{0}^{*}+\Delta} e^{-rs}h(s,L_{s}) ds + \int_{\tau_{0}^{*}+\Delta}^{\tau_{1}^{*}+\Delta} e^{-rs}h(s,L_{s}+\beta_{0}^{*}) ds \\ & -&   e^{-r(\tau_{0}^{*}+\Delta)}\psi(\beta_{0}^{*})-e^{-r(\tau_{1}^{*}+\Delta)}\psi(\beta_{1}^{*})+  Y_{\tau_{1}^{*}+\Delta}(\tau_{1}^{*},\beta_{0}^{*}+\beta_{1}^{*})\bigg\rbrack.
\end{eqnarray*}
Now, we use the same reasoning as many times as necessary to get
\begin{eqnarray} \nonumber
Y_{0}(0,0) &= &\E \bigg\lbrack \int_{0}^{\tau_{0}^{*}+\Delta} e^{-rs}h(s,L_{s}) ds  + \sum_{1\leq k \leq n-1} \int_{\tau_{k-1}^{*}+\Delta}^{\tau_{k}^{*}+\Delta} e^{-rs}h(s,L_{s}+\beta_{0}^{*}+\cdots+\beta_{k-1}^{*}) ds \\ \qquad &-&  \sum_{k=0}^{n-1} e^{-r(\tau_{k}^{*}+\Delta)}\psi(\beta_{k}^{*})+Y_{\tau_{n}^{*}+\Delta}(\tau_{n}^{*},\beta_{0}^{*}+\cdots + \beta_{n}^{*})\bigg\rbrack.
\end{eqnarray}
But, by \eqref{BorneY}, $\underset{n\to\infty}{\lim}Y_{\tau_{n}^{*}+\Delta}(\tau_{n}^{*},\beta_{0}^{*}+\cdots + \beta_{n}^{*})=0$. Thus,  take the limit w.r.t $n$ in the left hand-side of the previous equality to obtain that,
$$
Y_{0}(0,0)= J(\delta^{*}).
$$

To proceed, we prove that the strategy $\delta^{*}=(\tau_{n}^{*},\beta_{n}^{*})_{n\geq 0}$ is optimal for the general impulse control problem, i.e. $J(\delta^{*})\geq J(\delta ^{\prime})$ for any $\delta^{\prime}
=(\tau^\prime_{n},\beta^\prime_{ n})_{n\geq 0}$ in $\mathcal{A}$. The definition of the Snell envelope allows us to write
 $$ Y_{0}(0,0)\geq \E\left[\int_{0}^{\tau_{0}^{\prime}} e^{-rs}h(s,L_{s})
ds+  O_{\tau_{0}^{\prime}}(0,0)\right].
$$
But, we have
 $$ O_{\tau_{0}^{\prime}}(0,0) \geq \E\left[\Int_{\tau_{0}^{\prime}}^{\tau_{0}^{\prime}+\Delta}  e^{-rs}  h(s,L_{s})ds- e^{-r(\tau_{0}^{\prime}+\Delta)}\psi(\beta_{0}^{\prime})+  Y_{\tau_{0}^{\prime}+\Delta}(\tau_{0}^{\prime},\beta_{0}^{\prime})|\mathcal{F}_{\tau_{0}^{\prime}}\right]$$
which yields
\begin{eqnarray}
 Y_{0}(0,0)\geq \E\left[\int_{0}^{\tau_{0}^{\prime}+\Delta} e^{-rs}h(s,L_{s})ds - e^{-r(\tau_{0}^{\prime}+\Delta)}\psi(\beta_{0}^{\prime})+  Y_{\tau_{0}^{\prime}+\Delta}(\tau_{0}^{\prime},\beta_{0}^{\prime})\right].
\end{eqnarray}
Next, as in \eqref{inside2}, We have
\begin{eqnarray*}
Y_{\tau_{0}^{\prime}+\Delta}(\tau_{0}^{\prime},\beta_{0}^{\prime})& =& \esssup\limits_{\tau \in \mathcal{T}_{\tau_{0}^{\prime}+\Delta}}\E \left[
\int_{\tau_{0}^{\prime}+\Delta}^{\tau} e^{-rs}h(s,L_{s}+\beta_{0}^{\prime})ds + O_{\tau}(\tau_{0}^{\prime},\beta_{0}^{\prime})|\Fc_{\tau_{0}^{\prime}+\Delta}\right]\\
& \geq & \E \left[ \int_{\tau_{0}^{\prime}+\Delta}^{\tau_{1}^{\prime}} e^{-rs}h(s,L_{s}+\beta_{0}^{\prime})ds+ O_{\tau_{1}^{\prime}}(\tau_{0}^{\prime},\beta_{0}^{\prime})|\Fc_{\tau_{0}^{\prime}+\Delta}\right]\\
&\geq &\E \left[ \int_{\tau_{0}^{\prime}+\Delta}^{\tau_{1}^{\prime}+\Delta} e^{-rs}h(s,L_{s}+\beta_{0}^{\prime})ds-  e^{-r(\tau_{1}^{\prime}+\Delta)}\psi(\beta_{1}^{\prime})+  Y_{\tau_{1}^{\prime}+\Delta}(\tau_{1}^{\prime},\beta_{0}^{\prime}+\beta_{1}^{\prime})|\Fc_{\tau_{0}^{\prime}+\Delta}\right].
\end{eqnarray*}
Therefore,
\begin{eqnarray*}
Y_{0}(0,0) & \geq & \E \bigg\lbrack \int_{0}^{\tau_{0}^{\prime}+\Delta} e^{-rs}h(s,L_{s})ds+ \int_{\tau_{0}^{\prime}+\Delta}^{\tau_{1}^{\prime}+\Delta} e^{-rs}h(s,L_{s}+\beta_{0}^{\prime})ds \\ &-&  e^{-r(\tau_{0}^{\prime}+\Delta)}\psi(\beta_{0}^{\prime})- e^{-r(\tau_{1}^{\prime}+\Delta)}\psi(\beta_{1}^{\prime})+  Y_{\tau_{1}^{\prime}+\Delta}(\tau_{1}^{\prime},\beta_{0}^{\prime}+\beta_{1}^{\prime})\bigg\rbrack.
\end{eqnarray*}
By repeating this argument $n$ times, we obtain
\begin{eqnarray*}
Y_{0}(0,0) & \geq & \E \bigg\lbrack \int_{0}^{\tau_{0}^{\prime}+\Delta} e^{-rs}h(s,L_{s})ds+\sum_{1\leq k \leq n-1}\int_{\tau_{k-1}^{\prime}+\Delta}^{\tau_{k}^{\prime}+\Delta} e^{-rs}h(s,L_{s}+\beta_{0}^{\prime}+\cdots + \beta_{k-1}^{\prime})ds \\ &-& \sum_{k=0}^{n}e^{-r(\tau_{k}^{\prime}+\Delta)}\psi(\beta_{k}^{\prime})+ Y_{\tau_{n}^{\prime}+\Delta}(\tau_{n}^{\prime},\beta_{0}^{\prime}+\cdots +\beta_{n}^{\prime})\bigg\rbrack.
\end{eqnarray*}
Finally, taking the limit as $n\to +\infty$, yields
$$ Y_{0}(0,0) \geq  \E \left[\int_{0}^{+\infty } e^{-rs}h(s,L_{s}^{\delta^{\prime}})ds - \sum_{n\geq 0}e^{-r(\tau_{n}^{\prime}+\Delta)}\psi(\beta_{n}^{\prime})\right]= J(\delta^{\prime})$$since
$\lim_{n\rightarrow \infty}Y_{\tau_{n}^{\prime}+\Delta}(\tau_{n}^{\prime},\beta_{0}^{\prime}+\cdots +\beta_{n}^{\prime})=0$. Hence, the strategy $\delta^{*}$ is optimal.
 \end{proof}
\section{Risk-sensitive impulse control problem}
In this section, we extend the previous results to the risk-sensitive case where the controller has a utility function which is of exponential type. In order to tackle this problem we do not use BSDEs, as in the previous section, but instead, the Snell envelope notion which is more appropriate.  A similar version of this problem is considered in Hdhiri {\it et al.} \cite{Hdhiri} in the case when the horizon is finite.

When the decision maker implements a strategy $\delta=(\tau_n,\xi_n)_{n\ge 1}$, the payoff is given by
\begin{equation}
J(\delta):=\E \left[\exp\theta\left\lbrace\int_{0}^{\infty} e^{-rs}h(s,L_{s}^{\delta})\;ds-\sum_{n\geq 1}e^{-r(\tau_{n}+\Delta)}\psi(\xi_{n})\right\rbrace\right], \label{reward2}
\end{equation}
where $\theta>0$ is the risk-sensitive parameter. Hereafter, for sake of simplicity, we will treat only the case $\theta=1$ since the other cases are treated in a similar way.

We proceed by recasting the risk-sensitive impulse control problem into an iterative optimal stopping problem, and by exploiting the Snell envelope properties, we shall be able to characterize recursively an optimal strategy to this risk-sensitive impulse control problem.
\subsection{Iterative optimal stopping and properties}
 Let  $\nu$ be a stopping time and $\xi$ an $\Fc_{\nu}$-measurable random variable, we introduce the sequence of processes  $(Y^{n}(\nu, \xi))_{n\geq 0}$  defined recursively by
\begin{equation}\label{Y0app}
 Y_{t}^{0}(\nu,\xi)= \mathbb{E}\left[\exp\left\lbrace \int_{t}^{+\infty}e^{-rs}h(s,L_{s}+\xi)\1_{[s\geq \nu]}ds\right\rbrace|\Fc_{t}\right],\, t\geq 0,
\end{equation}
and, for $n\geq 1$,
\begin{equation} \label{Yapp}
Y_{t}^{n}(\nu,\xi)= \esssup\limits_{\tau\in \mathcal{T}_{t}}\mathbb{E}\left[\exp\left\lbrace \int_{t}^{\tau}e^{-rs}h(s,L_{s}+\xi)\1_{[s\geq \nu]}ds\right\rbrace O_{\tau}^{n}(\nu,\xi)|\Fc_{t}\right],\,\, t\geq 0,
\end{equation}
where
\begin{eqnarray*}
O_{t}^{n}(\nu,\xi)= \max\limits_{\beta \in U} \bigg\lbrace  \mathbb{E}\bigg\lbrack \exp\bigg\lbrace \int_{t}^{t+\Delta}e^{-rs}h(s,L_{s}+\xi)\1_{[s\geq \nu]}ds -e^{-r(t+\Delta)}\psi(\beta) \bigg\rbrace  \times  Y_{t+\Delta}^{n-1}(\nu,\xi+\beta)|\Fc_{t}\bigg\rbrack \bigg\rbrace.
\end{eqnarray*}
Then the sequence of  processes  $(Y^{n}(\nu, \xi))_{n\geq 0}$ enjoys the following properties.
\begin{Proposition}\lb{75}${}$

\noindent i)  For any $n \in\N$, the process  $Y^{n}(\nu, \xi)$ belongs to $\Sc^{2}_{c}$ and satisfies  $\lim\limits_{t\to +\infty}Y_{t}^{n}(\nu, \xi)=1$.\\
ii) The sequence of processes  $(Y^{n}(\nu, \xi))_{n\geq 0}$ satisfies, $\P$.a.s, for any $t\ge 0$,
\begin{equation}\lb{ineqyn}
0\le Y_{t}^{n}(\nu, \xi) \leq Y_{t}^{n+1}(\nu, \xi)\leq \exp(\frac{\gamma e^{-rt}}{r}).
\end{equation}
Moreover, the process $Y_{t}(\nu, \xi)=\lim_{n\rightarrow \infty}Y_{t}^{n}(\nu, \xi)$, $t\ge 0$, is c{\`a}dl{\` a}g and satisfies
\begin{equation}\lb{estimyexp}\P\mbox{-}a.s.\quad \forall t\ge 0,\quad 0\le  Y_{t}(\nu, \xi)\leq \exp(\frac{\gamma e^{-rt}}{r}).\end{equation}
Finally, it holds that
\begin{equation}\label{MDynamic}
Y_{t}(\nu,\xi)= \esssup\limits_{\tau\in \mathcal{T}_{t}}\mathbb{E}\left[\exp\left\lbrace \int_{t}^{\tau} e^{-rs} h(s,L_{s}+\xi)\1_{[s\geq \nu]}ds\right\rbrace O_{\tau}(\nu,\xi)|\Fc_{t}\right],
\end{equation}
where  $$ O_{t}(\nu,\xi):= \max\limits_{\beta \in U} \bigg\lbrace  \mathbb{E}\bigg\lbrack \exp\bigg \lbrace \int_{t}^{t+\Delta}e^{-rs}h(s,L_{s}+\xi) \1_{[s\geq \nu]}ds -e^{-r(t+\Delta)}\psi(\beta) \bigg\rbrace Y_{t+\Delta}(\nu,\xi+\beta)|\Fc_{t}\bigg\rbrack \bigg\rbrace.$$
iii)  For any two stopping times $\nu$ and $\nu^{\prime}$ such that $\nu\leq \nu^{\prime}$ and $\xi$ an $\Fc_{\nu}$-measurable r.v., we have
$$\P-a.s.,\,\,\forall t\ge \nu',\,\, Y_{t}(\nu, \xi)=Y_{t}(\nu^{\prime}, \xi). $$
\end{Proposition}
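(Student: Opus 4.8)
The plan is to reproduce the scheme of Propositions~\ref{Proposition1}--\ref{Proposition2}, with the additive ``running reward $+$ obstacle'' structure replaced by its multiplicative exponential analogue and the bound $\frac{\gamma}{r}e^{-rt}$ replaced by $e^{\gamma e^{-rt}/r}$. The device that makes this work: for a stopping time $\nu$ and $\xi\in\Fc_\nu$ set $Q_t:=\exp\{\int_0^t e^{-rs}h(s,L_s+\xi)\1_{[s\geq\nu]}\,ds\}$, $t\in[0,\infty]$. By Assumption~\ref{assumpt}, $Q$ is continuous, $\Fc_t$-adapted, nondecreasing, with $1\le Q_t\le Q_\infty\le e^{\gamma/r}$. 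Since $\exp\{\int_t^\tau e^{-rs}h(s,L_s+\xi)\1_{[s\geq\nu]}\,ds\}=Q_\tau/Q_t$ and $Q_t\in\Fc_t$, the recursion \eqref{Yapp} reads $Y_t^n(\nu,\xi)=Q_t^{-1}S_t^n$, where $S_t^n:=\esssup_{\tau\in\Tc_t}\E[Q_\tau O_\tau^n(\nu,\xi)\mid\Fc_t]$ is the Snell envelope of the process $(Q_tO_t^n(\nu,\xi))_{t\in[0,\infty]}$. Hence every $Y^n$ can be handled by the optimal stopping results of Appendix, Part~(I), and the projection facts of Part~(II), exactly as in Section~\ref{sec4}; write also $\|\psi\|:=\max_{\beta\in U}\psi(\beta)<\infty$.

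First the base case $n=0$. With $A_t:=\int_0^t e^{-rs}h(s,L_s+\xi)\1_{[s\geq\nu]}\,ds$ one has $Y_t^0(\nu,\xi)=\E[e^{A_\infty-A_t}\mid\Fc_t]=e^{-A_t}\E[e^{A_\infty}\mid\Fc_t]$, the factor $\E[e^{A_\infty}\mid\Fc_t]$ being a bounded martingale, hence continuous (the filtration is Brownian), so $Y^0(\nu,\xi)\in\Sc_c^2$; moreover $0\le A_\infty-A_t\le\frac{\gamma}{r}e^{-rt}$ gives $1\le Y_t^0(\nu,\xi)\le e^{\gamma e^{-rt}/r}$, whence $\lim_{t\to\infty}Y_t^0(\nu,\xi)=1$ by the deterministic sandwich. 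For the inductive step assume $Y^{n-1}(\nu,\xi)\in\Sc_c^2$ with $1\le Y_t^{n-1}(\nu,\xi)\le e^{\gamma e^{-rt}/r}$ and limit $1$ at $\infty$, for every $\xi\in\Fc_\nu$. Then $O^n(\nu,\xi)$, being a maximum over the finite set $U$ of optional (equivalently predictable) projections of continuous time-shifted processes, is continuous by Appendix, Part~(II); Assumption~\ref{assumpt} together with $\psi\ge k>0$ yields $e^{-\|\psi\|e^{-r(t+\Delta)}}\le O_t^n(\nu,\xi)\le e^{\gamma e^{-rt}/r}$, so $O^n(\nu,\xi)\in\Sc_c^2$, $\lim_{t\to\infty}O_t^n(\nu,\xi)=1$, and in particular $O_\infty^n(\nu,\xi)=1$. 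Consequently $(Q_tO_t^n(\nu,\xi))_t$ is continuous and bounded, its Snell envelope $S^n$ is continuous (any jump of a Snell envelope is negative, predictable and carried by $\{S^n_-=QO^n\}$ by Appendix, Part~(I), which continuity of $QO^n$ and $S^n\ge QO^n$ exclude; this is also the mechanism behind Theorem~\ref{Theor}), and $Y_t^n(\nu,\xi)=Q_t^{-1}S_t^n\in\Sc_c^2$. Taking $\tau\equiv+\infty$ in \eqref{Yapp} and using $O_\infty^n(\nu,\xi)=1$ gives $Y_t^n(\nu,\xi)\ge\E[Q_\infty/Q_t\mid\Fc_t]=Y_t^0(\nu,\xi)\ge1$ (cf.\ \eqref{Y0app}), while $h\le\gamma$ and the bound on $O^n$ give $Y_t^n(\nu,\xi)\le e^{\gamma e^{-rt}/r}$; sandwiching again yields $\lim_{t\to\infty}Y_t^n(\nu,\xi)=1$. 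This proves i) and the upper bound in \eqref{ineqyn}; the monotonicity $Y^n\le Y^{n+1}$ follows as in Proposition~\ref{Proposition1}, since $Y^0\le Y^1$ (by the $\tau\equiv+\infty$ argument) and $Y^{n-1}\le Y^n\Rightarrow O^n\le O^{n+1}\Rightarrow S^n\le S^{n+1}\Rightarrow Y^n\le Y^{n+1}$, using monotonicity of $y\mapsto\E[\exp\{\cdots\}y\mid\Fc_t]$, of $\max$, and of the Snell envelope in its obstacle.

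It follows that $Y_t(\nu,\xi):=\lim_nY_t^n(\nu,\xi)$ exists and satisfies \eqref{estimyexp}; since $S^n=QY^n$ is an increasing sequence of continuous supermartingales, its pointwise limit $QY$ is c\`adl\`ag (Dellacherie--Meyer, Vol.~B, p.~86), hence so is $Y=Q^{-1}(QY)$. Conditional monotone convergence and the finiteness of $U$ give $O^n(\nu,\xi)\nearrow O(\nu,\xi)$ pointwise, with $O$ as in the statement, so $QO^n\nearrow QO$ and, by point~v) of Appendix, Part~(I), $S^n$ increases to the Snell envelope of $QO$; dividing by $Q_t$ then gives exactly \eqref{MDynamic}. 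Finally, iii) is proved by induction on $n$ as in Proposition~\ref{Proposition2}(ii): for $t\ge\nu'\ge\nu$ one has $\1_{[s\geq\nu]}=\1_{[s\geq\nu']}$ for all $s\ge t$, whence $Y_t^0(\nu,\xi)=Y_t^0(\nu',\xi)$; if the equality holds at level $n-1$ for all $\xi\in\Fc_\nu$ then, using that $\xi+\beta\in\Fc_\nu$ for the constant $\beta\in U$, the same indicator identity gives $O_t^n(\nu,\xi)=O_t^n(\nu',\xi)$ for $t\ge\nu'$, and hence $Y_t^n(\nu,\xi)=Y_t^n(\nu',\xi)$ for $t\ge\nu'$ via \eqref{Yapp} restricted to $\tau\ge t$; letting $n\to\infty$ completes iii).

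The architecture is entirely parallel to Section~\ref{sec4}, so I expect no conceptual obstacle. The point requiring care is the behaviour at $t=+\infty$: identifying $O_\infty^n(\nu,\xi)=1$, so that the admissible choice $\tau\equiv+\infty$ in \eqref{Yapp} returns $Y_t^0(\nu,\xi)$ and supplies the lower bound $Y^n\ge1$ (which replaces the trivial bound $0$ of the additive case), and verifying that the continuity statements borrowed from the Appendix --- continuity of the Snell envelope of a continuous obstacle of class $[D]$ in the Brownian filtration, and continuity of optional/predictable projections of continuous time-shifted processes --- indeed transfer to the multiplicative exponential functional treated here.
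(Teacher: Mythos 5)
Your proposal is correct and follows essentially the same route as the paper: induction on $n$ with the multiplicative supermartingale $\exp\{\int_0^t e^{-rs}h(s,L_s+\xi)\1_{[s\ge\nu]}ds\}\,Y^n_t(\nu,\xi)$ identified as the Snell envelope of a continuous obstacle, the bound $\exp(\gamma e^{-rt}/r)$, the limit $1$ at infinity obtained by comparison with $Y^0$, Dellacherie--Meyer for the c\`adl\`ag limit, point v) of Part (I) for \eqref{MDynamic}, and induction plus the indicator identity for iii). Your explicit factorization through $Q_t$ and the two-sided sandwich $e^{-\|\psi\|e^{-r(t+\Delta)}}\le O^n_t\le e^{\gamma e^{-rt}/r}$ are only presentational refinements of the paper's argument.
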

\begin{proof}  Let $\nu$ be a stopping time and $\xi$ an $\mathcal{F}_{\nu}$-measurable random variable.
\medskip

\noindent i) We will show by induction that for each $n\geq 0$, for any $\xi\in \Fc_\nu$, $Y^{n}(\nu, \xi)$ belongs to $\Sc^{2}_{c}$, satisfies  $\lim\limits_{t\to +\infty}Y_{t}^{n}(\nu, \xi)=1$ and $\P$-a.s, for any $t\ge 0$,
\begin{equation*}
0\le Y_{t}^{n}(\nu, \xi) \leq \exp(\frac{\gamma e^{-rt}}{r}).
\end{equation*}
Let us start with the case $n=0$. In view of the definition of $Y^{0}(\nu,\xi)$ given by (\ref{Y0app}), we have $\lim\limits_{t\to +\infty}Y_{t}^{0}(\nu, \xi)=1$ since $h$ is bounded. On the other hand,
\begin{eqnarray*}
 \mathbb{E}\left[\sup\limits_{t\geq 0} |Y_{t}^{0}(\nu,\xi)|^{2} \right] &=& \mathbb{E}\left[\sup\limits_{t\geq 0} \left| \mathbb{E}\left[ \exp\left\lbrace \int_{t}^{+\infty}e^{-rs}h(s,L_{s}+\xi)\1_{[s\geq \nu]}ds\right\rbrace|\Fc_{t}\right] \right|^{2} \right] \\&\leq& \mathbb{E}\left[\sup\limits_{t\geq 0} \exp\left\lbrace 2 \int_{t}^{+\infty} \gamma  e^{-rs} ds\right\rbrace\right]=
 \mathbb{E}\left[\exp\left\lbrace 2 \int_{0}^{+\infty} \gamma  e^{-rs} ds\right\rbrace \right] = \exp(2\frac{\gamma}{r}),
\end{eqnarray*}
since $h$ is uniformly  bounded by $\gamma$ (Assumption \ref{assumpt}). In addition, we note that  for every $t\ge 0$,
\begin{eqnarray*}
 Y_{t}^{0}(\nu,\xi )&=& \mathbb{E}\left[\exp\left\lbrace \int_{t}^{+\infty}e^{-rs}h(s,L_{s}+\xi)\1_{[s\geq \nu]}ds\right\rbrace|\Fc_{t}\right] \\
 &=& \mathbb{E}\left[\exp\left\lbrace \int_{0}^{+\infty}e^{-rs}h(s,L_{s}+\xi)\1_{[s\geq \nu]}ds\right\rbrace|\Fc_{t}\right] \exp\left\lbrace -\int_{0}^{t}e^{-rs}h(s,L_{s}+\xi)\1_{[s\geq \nu]}ds\right\rbrace.
\end{eqnarray*}
As martingales w.r.t. the Brownian filtration are continuous,  then
clearly $ Y^{0}(\nu,\xi )$ is continuous on $[0,+\infty]$, and then $ Y^{0}(\nu,\xi )$
belongs to $\Sc_{c}^{2}$. Finally
\begin{eqnarray*}
 0\le Y_{t}^{0}(\nu,\xi)=\mathbb{E}\left[\exp\left\lbrace \int_{t}^{+\infty}e^{-rs}h(s,L_{s}+\xi)\1_{[s\geq \nu]}ds\right\rbrace|\Fc_{t}\right] \le \exp\left\lbrace  \int_{t}^{+\infty} \gamma  e^{-rs} ds\right\rbrace=\exp(\frac{\gamma e^{-rt}}{r}).
\end{eqnarray*}
Thus the property holds for $n=0$. Assume now that it holds for some $n\ge 1$.
First note that since for every $t\ge 0$ and every $\xi\in \Fc_\nu$, $0 \leq Y^{n}_{t}(\nu,\xi)\leq \exp(\frac{\gamma e^{-rt}}{r}) $, then
\begin{eqnarray*}
0\le Y_{t}^{n+1}(\nu,\xi) &=& \esssup\limits_{\tau\in \mathcal{T}_{t}}\mathbb{E}\bigg\lbrack \exp\bigg\lbrace \int_{t}^{\tau}e^{-rs}h(s,L_{s}+\xi)\1_{[s\geq \nu]}ds\bigg\rbrace O_{\tau}^{n+1}(\nu,\xi)|\Fc_{t}\bigg\rbrack \\
&=& \esssup\limits_{\tau\in \mathcal{T}_{t}}\mathbb{E}\bigg\lbrack \exp\bigg\lbrace \int_{t}^{\tau}e^{-rs}h(s,L_{s}+\xi)\1_{[s\geq \nu]}ds\bigg\rbrace \\ & \times &  \max\limits_{\beta \in U} \bigg\lbrace  \mathbb{E}\bigg\lbrack \exp\bigg \lbrace \int_{\tau}^{\tau+\Delta}e^{-rs}h(s,L_{s}+\xi)\1_{[s\geq \nu]}ds -e^{-r(\tau+\Delta)}\psi(\beta) \bigg\rbrace \\ & \times &  Y_{\tau+\Delta}^{n}(\nu,\xi+\beta)|\Fc_{\tau}\bigg\rbrack \bigg\rbrace |\Fc_{t}\bigg\rbrack \\
 & \leq &  \esssup\limits_{\tau\in \mathcal{T}_{t}}\mathbb{E}\bigg\lbrack \exp\left\lbrace \frac{\gamma}{r}(e^{-rt}-e^{-r(\tau+\Delta)})  + \frac{\gamma e^{-r(\tau+\Delta)}}{r}\right\rbrace|\Fc_{t}\bigg\rbrack
  =   \exp(\frac{\gamma e^{-rt}}{r}).
\end{eqnarray*}
Therefore,
$$
\limsup_{t\rightarrow \infty}Y_{t}^{n+1}(\nu,\xi)\le 1.
$$
On the other hand
\begin{eqnarray*}
Y_{t}^{n+1}(\nu,\xi) &=& \esssup\limits_{\tau\in \mathcal{T}_{t}}\mathbb{E}\bigg\lbrack \exp\bigg\lbrace \int_{t}^{\tau}e^{-rs}h(s,L_{s}+\xi)\1_{[s\geq \nu]}ds\bigg\rbrace O_{\tau}^{n+1}(\nu,\xi)|\Fc_{t}\bigg\rbrack \\
& \geq & \lim\limits_{T\to + \infty} \mathbb{E}\bigg\lbrack \exp\bigg\lbrace \int_{t}^{T}e^{-rs}h(s,L_{s}+\xi)\1_{[s\geq \nu]}ds\bigg\rbrace O_{T}^{n+1}(\nu,\xi)|\Fc_{t}\bigg\rbrack \\
& \geq &  \mathbb{E}\left[\exp\left\lbrace \int_{t}^{+\infty}e^{-rs}h(s,L_{s}+\xi)\1_{[s\geq \nu]}ds\right\rbrace |\Fc_{t}\right]=Y_{t}^{0}(\nu,\xi).
\end{eqnarray*}since $\underset{T\to\infty}{\lim}O_{T}^{n+1}(\nu,\xi)=1$ by the induction hypothesis. Thus,
$$
\underset{t\to\infty}{\liminf}Y_{t}^{n+1}(\nu,\xi)\ge \lim_{t\rightarrow \infty}Y_{t}^{0}(\nu,\xi)= 1.
$$
This combined with the above estimates yield
$$
\lim_{t\rightarrow \infty}Y_{t}^{n+1}(\nu,\xi)=1.
$$
It remains to show that $Y^{n+1}(\nu,\xi)$ belongs to $\Sc_{c}^{2}$. With the above estimates, it is enough to show that it is continuous. First note that the process
$$ \Theta^{n+1}_{t}=  \exp \left\lbrace\int_{0}^{t}e^{-rs}h(s,L_{s}+\xi)\1_{[s\geq \nu]}ds\right\rbrace O_{t}^{n+1}(\nu,\xi), t\geq 0,$$is continuous on $[0,+\infty]$. Therefore, its Snell envelope is also continuous on $[0,+\infty]$, i.e., \\
$Y^{n+1}_t(\nu,\xi)\exp\{\int_0^th(s,L_s+\xi)1_{\{s\ge \nu\}}ds, \,\,t\ge 0$, is continuous
on $[0,+\infty]$ and then
$Y^{n+1}(\nu,\xi)$ is continuous on $[0,+\infty]$. The proof of the claim is now complete.

To show that $\P$-a.s. for every $t\ge 0$,
\begin{equation*}
Y_{t}^{n}(\nu, \xi) \leq Y_{t}^{n+1}(\nu, \xi),
\end{equation*}
it is enough to use an induction argument and to take into account that  $\P$-a.s., $\forall \xi \in \Fc_\nu$, $\forall t\ge 0$,
\begin{equation}\label{ineqy1y0}
Y_{t}^{1}(\nu, \xi) \geq Y_{t}^{0}(\nu, \xi).
\end{equation}
To see this last inequality holds, we note that, for any $T\geq t$,
\begin{equation*}
Y_{t}^{1}(\nu, \xi) \geq \mathbb{E}\left[\exp\left\lbrace \int_{t}^{T}e^{-rs}h(s,L_{s}+\xi)\1_{[s\geq \nu]}ds\right\rbrace O_{T}^{1}(\nu,\xi)|\Fc_{t}\right].
\end{equation*}
Take now the limit when $T\rightarrow \infty$ to obtain \eqref{ineqy1y0} since $
 \lim\limits_{T\to +\infty} O_{T}^{1}(\nu,\xi) =1.
 $

 Next, for $t\ge 0$ let us set $Y_{t}(\nu, \xi)=\lim_{n\rightarrow \infty}Y_{t}^{n}(\nu, \xi)$.
 Therefore,
 $Y_{t}(\nu, \xi)$ satisfies \eqref{estimyexp} by taking the limit in \eqref{ineqyn}. Now
 $(Y^{n}_t(\nu,\xi)\exp\{\int_0^th(s,L_s+\xi)1_{\{s\ge \nu\}}ds)_{t\ge 0}$ is a bounded increasing
 sequence of continuous supermartingales, then its limit is c{\`a}dl{\`a}g and
 then $Y(\nu, \xi)$ is c{\`a}dl{\`a}g. Finally by Part (II)-ii) of Appendix the process $O(\nu,\xi)$
 is càdlàg and the sequence $(O^n(\nu,\xi))_{n\ge 1}\nearrow O(\nu,\xi)$, therefore by Part (I)-(v) in Appendix, $Y(\nu, \xi)$ satisfies \eqref{MDynamic}.

 iii)  To show that for any two stopping times $\nu$ and $\nu^{\prime}$ such that $\nu\leq \nu^{\prime}$ and $\xi$ an $\Fc_{\nu}$-measurable r.v., we have  $\P $-a.s.
$$  Y_{t}(\nu, \xi)=Y_{t}(\nu^{\prime}, \xi), \quad \forall  \, t \geq \nu^{\prime} $$
it is enough to show that $\forall n\geq 0$, $\forall \xi\in \Fc_\nu$,
 $$  Y_{t}^n(\nu, \xi)=Y_{t}^n(\nu^{\prime}, \xi), \quad \forall t\ge \nu'.$$But this property is obtained by an induction. Actually for $n=0$ this property is valid in view of the definition of $Y_{t}^0(\nu, \xi)$ and since $1_{\{s\ge \nu\}}=1_{\{s\ge \nu'\}}$ if $s\ge t\ge \nu'\ge \nu$. Next assume that the property is valid for some $n$. Therefore, for any $\beta \in U$ (constant), by the induction hypothesis
 \begin{eqnarray*}\begin{array}{lll}
\mathbb{E}\bigg\lbrack \exp\bigg\lbrace \int_{t}^{t+\Delta}e^{-rs}h(s,L_{s}+\xi)\1_{[s\geq \nu]}ds -e^{-r(t+\Delta)}\psi(\beta) \bigg\rbrace  \times  Y_{t+\Delta}^{n}(\nu,\xi+\beta)|\Fc_{t}\bigg\rbrack \\ \qquad\qquad \quad=
 \mathbb{E}\bigg\lbrack \exp\bigg\lbrace \int_{t}^{t+\Delta}e^{-rs}h(s,L_{s}+\xi)\1_{[s\geq \nu']}ds -e^{-r(t+\Delta)}\psi(\beta) \bigg\rbrace  \times  Y_{t+\Delta}^{n}(\nu',\xi+\beta)|\Fc_{t}\bigg\rbrack.
 \end{array}
\end{eqnarray*}
 Taking the supremum over $\beta \in U$, we obtain $O_{t}^{n+1}(\nu, \xi)=O_{t}^{n+1}(\nu', \xi)$, and then $Y_{t}^{n+1}(\nu, \xi)=Y_{t}^{n+1}(\nu^{\prime}, \xi)$. To complete the proof, we just need to take the limit w.r.t. $n$.
 \end{proof}

 \begin{Lemma} For any stopping time $\nu$ and $\xi$ a finite r.v.
 (i.e. $card(\xi(\Omega))<\infty$), $\Fc_\nu$-measurable we have:
\begin{equation}\label {ydetermine}
\forall t\ge \nu,\quad Y_t(\nu,\xi)=\sum_{\theta \in
\xi(\Omega)}1_{\{\xi=\theta\}}Y_t(\nu,\theta).
\end{equation}
\end{Lemma}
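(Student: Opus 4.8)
The plan is to prove the identity first for the finite-$n$ approximations $Y^{n}(\nu,\xi)$ and $O^{n}(\nu,\xi)$ by induction on $n$, and then to let $n\to\infty$. Throughout, for a fixed deterministic $t$ the assertion is understood as an a.s.\ equality on the event $\{\nu\le t\}$ (equivalently, by c\`adl\`ag regularity, simultaneously for all $t\ge\nu$); on that event every indicator $\1_{\{\xi=\theta\}}$, $\theta\in\xi(\Omega)$, is $\Fc_{t}$-measurable, since $\{\xi=\theta\}\in\Fc_{\nu}$, and moreover $h(s,L_{s}+\xi)\1_{[s\ge\nu]}=h(s,L_{s}+\theta)\1_{[s\ge\nu]}$ on $\{\xi=\theta\}$.

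For $n=0$ one moves $\1_{\{\xi=\theta\}}$ inside the conditional expectation in \eqref{Y0app} and uses the agreement of the integrands on $\{\xi=\theta\}$, obtaining $\1_{\{\xi=\theta\}}Y_{t}^{0}(\nu,\xi)=\1_{\{\xi=\theta\}}Y_{t}^{0}(\nu,\theta)$ for each $\theta\in\xi(\Omega)$; summing over $\theta$ (using $\sum_{\theta}\1_{\{\xi=\theta\}}=1$) gives \eqref{ydetermine} at level $0$. The induction hypothesis I carry is that \eqref{ydetermine} holds at level $n$ for \emph{every} finite $\Fc_{\nu}$-measurable random variable and every $t\ge\nu$. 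For the inductive step, fix $\beta\in U$ and apply the hypothesis to the finite $\Fc_{\nu}$-measurable r.v.\ $\xi+\beta$ at time $t+\Delta\,(\ge\nu)$: on $\{\xi=\theta\}$ this yields $Y_{t+\Delta}^{n}(\nu,\xi+\beta)=Y_{t+\Delta}^{n}(\nu,\theta+\beta)$. Combining this with the agreement of the $h$-terms on $\{\xi=\theta\}$, and again pushing $\1_{\{\xi=\theta\}}$ through the conditional expectation, each of the finitely many random variables $G_{\beta}(\xi)$ appearing in the $\max_{\beta\in U}$ defining $O_{t}^{n+1}(\nu,\xi)$ satisfies $\1_{\{\xi=\theta\}}G_{\beta}(\xi)=\1_{\{\xi=\theta\}}G_{\beta}(\theta)$. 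Since $U$ is finite and $\1_{A}\max_{\beta}a_{\beta}=\max_{\beta}(\1_{A}a_{\beta})$ for $A\in\Fc_{t}$, this gives $\1_{\{\xi=\theta\}}O_{t}^{n+1}(\nu,\xi)=\1_{\{\xi=\theta\}}O_{t}^{n+1}(\nu,\theta)$, hence $O_{t}^{n+1}(\nu,\xi)=\sum_{\theta\in\xi(\Omega)}\1_{\{\xi=\theta\}}O_{t}^{n+1}(\nu,\theta)$ for all $t\ge\nu$.

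Because the processes $O^{n+1}(\nu,\cdot)$ are c\`adl\`ag (Proposition \ref{75} and Appendix, Part (II)), this identity, valid at all deterministic $t\ge\nu$, propagates to all times, in particular to any stopping time $\tau\ge\nu$. Inserting it into the representation \eqref{Yapp} of $Y_{t}^{n+1}(\nu,\xi)$, on $\{\xi=\theta\}$ the random variable under the $\esssup$ coincides with the one obtained by replacing $\xi$ by $\theta$; since $\1_{A}\esssup_{\tau}X_{\tau}=\esssup_{\tau}(\1_{A}X_{\tau})$ for $A\in\Fc_{t}$, we get $\1_{\{\xi=\theta\}}Y_{t}^{n+1}(\nu,\xi)=\1_{\{\xi=\theta\}}Y_{t}^{n+1}(\nu,\theta)$, and summing over $\theta$ closes the induction. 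Finally, by Proposition \ref{75} we have $Y_{t}^{n}(\nu,\eta)\uparrow Y_{t}(\nu,\eta)$ for every finite $\Fc_{\nu}$-measurable $\eta$, so passing to the limit in the finite sum $Y_{t}^{n}(\nu,\xi)=\sum_{\theta\in\xi(\Omega)}\1_{\{\xi=\theta\}}Y_{t}^{n}(\nu,\theta)$ produces \eqref{ydetermine}.

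The proof is essentially bookkeeping; the only steps that need some care are the handling of the $\max_{\beta\in U}$ together with the nested conditional expectations inside $O^{n+1}$, and the passage of the $O^{n+1}$-identity from deterministic times to stopping times before it is fed into the Snell envelope, the latter being immediate from c\`adl\`ag regularity. I do not foresee any genuine analytic obstacle.
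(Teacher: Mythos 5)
Your proof is correct, and it reproduces the paper's overall strategy: induction on $n$, with the heart of the inductive step being the identity $O^{n+1}_t(\nu,\xi)=\sum_{\theta\in\xi(\Omega)}\1_{\{\xi=\theta\}}O^{n+1}_t(\nu,\theta)$, obtained by applying the induction hypothesis to the finite r.v. $\xi+\beta$ at time $t+\Delta$ and commuting $\1_{\{\xi=\theta\}}$ with $\max_{\beta\in U}$ and with conditioning on $\Fc_t$; the passage to the limit in $n$ is also identical. Where you genuinely diverge is in how the $O^{n+1}$-identity is upgraded to the one for $Y^{n+1}$. The paper sets $\bar Y^{n+1}_t(\nu,\xi)=\sum_{\theta}\1_{\{\xi=\theta\}}Y^{n+1}_t(\nu,\theta)$, checks that $\exp\{\int_\nu^t\cdots\}\,\bar Y^{n+1}(\nu,\xi)$ is a continuous supermartingale dominating $\exp\{\int_\nu^t\cdots\}\,O^{n+1}(\nu,\xi)$, and then runs a minimality argument (any dominating c\`adl\`ag supermartingale $U$ satisfies $\1_{\{\xi=\theta\}}U\ge \1_{\{\xi=\theta\}}\exp\{\cdots\}Y^{n+1}(\nu,\theta)$ for each $\theta$), identifying $\bar Y^{n+1}$ as the Snell envelope and hence as $Y^{n+1}(\nu,\xi)$. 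You instead substitute the $O^{n+1}$-identity directly into the defining representation \eqref{Yapp} and use $\1_A\esssup_\tau X_\tau=\esssup_\tau(\1_A X_\tau)$ for $A\in\Fc_t$ and a non-negative family; this bypasses the supermartingale-minimality machinery entirely and is shorter, at the cost of the two small technical points you correctly flag (upgrading the $O^{n+1}$-identity from deterministic times to indistinguishability, hence to stopping times, via c\`adl\`ag regularity before feeding it into the $\esssup$; and the measurability of the indicators with respect to $\Fc_t$ so that they pass through the conditional expectations). Both arguments are sound; yours is the more economical of the two.
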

\proof It is enough to show that for any $n\ge 0$, for any $\xi \in \Fc_\nu$
finite \begin{equation}\label {yndetermine}
\forall t\ge \nu,\quad Y^n_t(\nu,\xi)=\sum_{\theta \in
\xi(\Omega)}1_{\{\xi=\theta\}}Y^n_t(\nu,\theta).
\end{equation}
This last equality will be shown by induction. Indeed, for $n=0$ the property holds true since
\begin{align}
\nn Y_{t}^{0}(\nu,\xi )&= \mathbb{E}\left[\exp\left\lbrace \int_{t}^{+\infty}e^{-rs}h(s,L_{s}+\xi)\1_{[s\geq \nu]}ds\right\rbrace|\Fc_{t}\right] \\
 &\nn =\mathbb{E}\left[\sum_{\theta \in
\xi(\Omega)}1_{\{\xi=\theta\}}\exp\left\lbrace \int_{t}^{+\infty}e^{-rs}h(s,L_{s}+\theta)\1_{[s\geq \nu]}ds\right\rbrace|\Fc_{t}\right]\\\nn
&\nn =\sum_{\theta \in
\xi(\Omega)}1_{\{\xi=\theta\}}\mathbb{E}\left[\exp\left\lbrace \int_{t}^{+\infty}e^{-rs}h(s,L_{s}+\theta)\1_{[s\geq \nu]}ds\right\rbrace|\Fc_{t}\right]=\sum_{\theta \in
\xi(\Omega)}1_{\{\xi=\theta\}}Y^0_t(\nu,\theta)
\end{align}
since $\{\xi=\theta\}\in \Fc_\nu\subset \Fc_t$. \\
Suppose now that the property holds for some $n\ge 0$. Let us show
that it holds also for $n+1$. For that let us set, for $t\ge \nu$,
\begin{align} \nn \bar Y^{n+1}_t(\nu,\xi)=\sum_{\theta \in
\xi(\Omega)}1_{\{\xi=\theta\}}Y^{n+1}_t(\nu,\theta).\end{align}
First note that, for any $t\ge \nu$,
$$
\exp\{\int_{\nu}^{t}e^{-rs}h(s,L_{s}+\xi)\1_{[s\geq
\nu]}ds\}\times \bar Y^{n+1}_t(\nu,\xi)=\sum_{\theta \in
\xi(\Omega)}1_{\{\xi=\theta\}}\exp\{\int_{\nu}^{t}e^{-rs}h(s,L_{s}+\theta)\1_{[s\geq
\nu]}ds\}\times Y^{n+1}_t(\nu,\theta).
$$
Therefore,
$$\left(\exp\{\int_{\nu}^{t}e^{-rs}h(s,L_{s}+\xi)\1_{[s\geq \nu]}ds\}\times \bar Y^{n+1}_t(\nu,\xi)\right)_{t\ge \nu}$$ is a continuous supermartingale since $\exp\{\int_{\nu}^{t}e^{-rs}h(s,L_{s}+\theta)\1_{[s\geq \nu]}ds\}\times Y^{n+1}_t(\nu,\theta)$, $t\ge \nu$, are continuous supermartingales  and the sets $\{\xi=\theta\}$ belong to $\Fc_\nu$. On the other hand for any $ t\ge\nu$,
\begin{align}
&\nn \sum_{\theta \in
\xi(\Omega)}1_{\{\xi=\theta\}}\exp\{\int_{\nu}^{t}e^{-rs}h(s,L_{s}+\theta)\1_{[s\geq \nu]}ds\}\times Y^{n+1}_t(\nu,\theta)\\&\qquad \ge
\sum_{\theta \in
\xi(\Omega)}1_{\{\xi=\theta\}} \exp\{\int_{\nu}^{t}e^{-rs}h(s,L_{s}+\theta)\1_{[s\geq \nu]}ds\}\times O^{n+1}_t(\nu,\theta)=:V_t\nn
\end{align}
since $(\exp\{\int_{\nu}^{t}e^{-rs}h(s,L_{s}+\theta)\1_{[s\geq \nu]}ds\}\times Y^{n+1}_t(\nu,\theta))_{t\ge \nu}$ is the Snell envelope of
$(\exp\{\int_{\nu}^{t}e^{-rs}h(s,L_{s}+\theta)\1_{[s\geq \nu]}ds\}\times O^{n+1}_t(\nu,\theta))_{t\ge \nu}$ for any $\theta \in \xi(\Omega)$. But, by using the induction hypothesis (in the penultimate equality), we have
\begin{align}V_t&=\sum_{\theta \in
\xi(\Omega)}1_{\{\xi=\theta\}} \exp\{\int_{\nu}^{t}e^{-rs}h(s,L_{s}+\theta)\1_{[s\geq \nu]}ds\}\times O^{n+1}_t(\nu,\theta)\nn\\&=
\sum_{\theta \in
\xi(\Omega)}1_{\{\xi=\theta\}} \max\limits_{\beta \in U} \bigg\lbrace  \mathbb{E}\bigg\lbrack \exp\bigg\lbrace \int_{\nu}^{t+\Delta}e^{-rs}h(s,L_{s}+\theta)\1_{[s\geq \nu]}ds -e^{-r(t+\Delta)}\psi(\beta) \bigg\rbrace  \times  Y_{t+\Delta}^{n}(\nu,\theta+\beta)|\Fc_{t}\bigg\rbrack \bigg\rbrace\nn\\&
= \max\limits_{\beta \in U} \bigg\lbrace \sum_{\theta \in
\xi(\Omega)} 1_{\{\xi=\theta\}}\mathbb{E}\bigg\lbrack \exp\bigg\lbrace \int_{\nu}^{t+\Delta}e^{-rs}h(s,L_{s}+\theta)\1_{[s\geq \nu]}ds -e^{-r(t+\Delta)}\psi(\beta) \bigg\rbrace  \times  Y_{t+\Delta}^{n}(\nu,\theta+\beta)|\Fc_{t}\bigg\rbrack \bigg\rbrace\nn
\\&
= \max\limits_{\beta \in U} \bigg\lbrace \mathbb{E}\bigg\lbrack \sum_{\theta \in
\xi(\Omega)} 1_{\{\xi=\theta\}}\exp\bigg\lbrace \int_{\nu}^{t+\Delta}e^{-rs}h(s,L_{s}+\theta)\1_{[s\geq \nu]}ds -e^{-r(t+\Delta)}\psi(\beta) \bigg\rbrace  \times  Y_{t+\Delta}^{n}(\nu,\theta+\beta)|\Fc_{t}\bigg\rbrack \bigg\rbrace\nn\\
&
= \max\limits_{\beta \in U} \bigg\lbrace \mathbb{E}\bigg\lbrack \sum_{\theta \in
\xi(\Omega)} 1_{\{\xi=\theta\}}\exp\bigg\lbrace \int_{\nu}^{t+\Delta}e^{-rs}h(s,L_{s}+\xi)\1_{[s\geq \nu]}ds -e^{-r(t+\Delta)}\psi(\beta) \bigg\rbrace  \times  Y_{t+\Delta}^{n}(\nu,\theta+\beta)|\Fc_{t}\bigg\rbrack \bigg\rbrace\nn
\\
&
= \max\limits_{\beta \in U} \bigg\lbrace \mathbb{E}\bigg\lbrack \exp\bigg\lbrace \int_{\nu}^{t+\Delta}e^{-rs}h(s,L_{s}+\xi)\1_{[s\geq \nu]}ds -e^{-r(t+\Delta)}\psi(\beta) \bigg\rbrace\{  \sum_{\theta \in
\xi(\Omega)} 1_{\{\xi=\theta\}} Y_{t+\Delta}^{n}(\nu,\theta+\beta)\}|\Fc_{t}\bigg\rbrack \bigg\rbrace\nn
\\
&
= \max\limits_{\beta \in U} \bigg\lbrace \mathbb{E}\bigg\lbrack \exp\bigg\lbrace \int_{\nu}^{t+\Delta}e^{-rs}h(s,L_{s}+\xi)\1_{[s\geq \nu]}ds -e^{-r(t+\Delta)}\psi(\beta) \bigg\rbrace Y_{t+\Delta}^{n}(\nu,\xi+\beta)|\Fc_{t}\bigg\rbrack \bigg\rbrace\nn\\&\nn
=\exp\{\int_{\nu}^{t}e^{-rs}h(s,L_{s}+\xi)\1_{[s\geq \nu]}ds\}\times O^{n+1}_t(\nu,\xi)\nn.
\end{align}
Thus, the continuous supermartingale
$(\exp\{\int_{\nu}^{t}e^{-rs}h(s,L_{s}+\theta)\1_{[s\geq
\nu]}ds\}\times \bar Y^{n+1}_t(\nu,\xi))_{t\ge \nu}$ is greater than
the process $(\exp\{\int_{\nu}^{t}e^{-rs}h(s,L_{s}+\xi)\1_{[s\geq
\nu]}ds\}\times O^{n+1}_t(\nu,\xi))_{t\ge \nu}$. Next, let
$(U_t)_{t\ge \nu}$ be a $\cdlg$ supermartinagle such that, for every $t\ge
\nu$,
\begin{align} \nn U_t&\ge
\exp\{\int_{\nu}^{t}e^{-rs}h(s,L_{s}+\xi)\1_{[s\geq \nu]}ds\}\times
O^{n+1}_t(\nu,\xi)\\\nn &=\sum_{\theta \in
\xi(\Omega)}1_{\{\xi=\theta\}}
\exp\{\int_{\nu}^{t}e^{-rs}h(s,L_{s}+\theta)\1_{[s\geq
\nu]}ds\}\times O^{n+1}_t(\nu,\theta).\end{align} This implies that,
for any $\theta \in \xi(\Omega)$ and $t\ge \nu$,
\begin{align}
1_{\{\xi=\theta\}} U_t\ge 1_{\{\xi=\theta\}}
\exp\{\int_{\nu}^{t}e^{-rs}h(s,L_{s}+\theta)\1_{[s\geq
\nu]}ds\}\times O^{n+1}_t(\nu,\theta).\nn \end{align} But, since the
set $\{\xi=\theta\}$ belongs to $\Fc_\nu$,  the process
$(1_{\{\xi=\theta\}}\exp\{\int_{\nu}^{t}e^{-rs}h(s,L_{s}+\theta)\1_{[s\geq
\nu]}ds\}\times Y^{n+1}_t(\nu,\theta))_{t\ge \nu}$ is the Snell
envelope of $( 1_{\{\xi=\theta\}}
\exp\{\int_{\nu}^{t}e^{-rs}h(s,L_{s}+\theta)\1_{[s\geq
\nu]}ds\}\times O^{n+1}_t(\nu,\theta))_{t\ge \nu}. $ Now, as
$(1_{\{\xi=\theta\}} U_t)_{t \ge \nu}$ is still a $\cdlg$ supermartingale then by Part (I), we have, for any $t\ge \nu$,
$$
1_{\{\xi=\theta\}} U_t\ge 1_{\{\xi=\theta\}}\exp\{\int_{\nu}^{t}e^{-rs}h(s,L_{s}+\theta)\1_{[s\geq \nu]}ds\}\times Y^{n+1}_t(\nu,\theta).$$ 
This implies that, for any $t\ge \nu$,
$$
U_t=\sum_{\theta \in
\xi(\Omega)}1_{\{\xi=\theta\}} U_t\ge \sum_{\theta \in
\xi(\Omega)}1_{\{\xi=\theta\}}\exp\{\int_{\nu}^{t}e^{-rs}h(s,L_{s}+\theta)\1_{[s\geq \nu]}ds\}\times Y^{n+1}_t(\nu,\theta).
$$
Consequently, the process $(\sum_{\theta \in
\xi(\Omega)}1_{\{\xi=\theta\}}\exp\{\int_{\nu}^{t}e^{-rs}h(s,L_{s}+\theta)\1_{[s\geq \nu]}ds\}\times Y^{n+1}_t(\nu,\theta))_{t\ge \nu}$ is the smallest $\cdlg$ supermartingale which dominates
$(\exp\{\int_{\nu}^{t}e^{-rs}h(s,L_{s}+\xi)\1_{[s\geq \nu]}ds\}\times O^{n+1}_t(\nu,\xi))_{t\ge \nu}$, and then, it is its  Snell envelope, i.e., for any $t\ge \nu$,
\begin{align} \nn
&\exp\{\int_{\nu}^{t}e^{-rs}h(s,L_{s}+\xi)\1_{[s\geq \nu]}ds\}\times
Y^{n+1}_t(\nu,\xi)\\ \nn &=\sum_{\theta \in
\xi(\Omega)}1_{\{\xi=\theta\}}\exp\{\int_{\nu}^{t}e^{-rs}h(s,L_{s}+\theta)\1_{[s\geq
\nu]}ds\}\times Y^{n+1}_t(\nu,\theta)\\\nn
&=\exp\{\int_{\nu}^{t}e^{-rs}h(s,L_{s}+\xi)\1_{[s\geq \nu]}ds\times
\sum_{\theta \in \xi(\Omega)}1_{\{\xi=\theta\}}Y^{n+1}_t(\nu,\theta)
\end{align}which implies \eqref{yndetermine}
holds for $n+1$ after an obvious simplification. It follows that for
any $n\ge 0$, the property \eqref{yndetermine} holds. Now it is
enough to take the limit w.r.t $n$ in \eqref{yndetermine} to obtain
the claim \eqref{ydetermine}.
\begin{Remark}As in Proposition \ref{53}, we can show in the same way that for any $n\ge 0$, there exists a strategy
$\delta^{*}_{n}$ which belongs to $\Ac_{n}$ such that
\begin{equation*}
Y_{0}^{n}(0,0)=\sup \limits_{\delta \in \Ac_{n}}J(\delta)=J(\delta^{*}_{n}),
\end{equation*} i.e., $\delta^{*}_{n}$ is optimal in $\Ac_{n}$.\end{Remark}
\subsection{The optimal strategy for the risk-sensitive problem }
We now deal with the issue of existence of an optimal strategy for
the risk-sensitive impulse control problem with delay. The main
difficulty is related to continuity of the process $Y(\nu,\xi)$.
Once this property is established we exhibit an optimal strategy and
show that $Y(0,0)$ is the value function of the control problem.  We have
\begin{Proposition} Let Assumption \ref{assumpt} hold. Then the process $(Y_t(\nu,\xi))_{t\ge 0}$ defined in \eqref{MDynamic} is continuous.
\end{Proposition}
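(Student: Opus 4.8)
The plan is to follow the argument of Proposition \ref{Proposition_cont} almost verbatim, with the additive structure of the linear reward replaced by the multiplicative structure coming from the exponential one. First I would record, from \eqref{estimyexp} and $0\le h\le\gamma$, the uniform bound $1\le O_t(\nu,\xi)\le\exp(\gamma e^{-rt}/r)$ valid for all $t\ge 0$ and all $(\nu,\xi)$; in particular $\sup_{s\ge M}|O_s(\nu,\xi)-1|\to 0$ as $M\to\infty$, uniformly in $\nu$ and $\xi$, which plays here the role of ``$O_t\to 0$ uniformly'' in the linear case. By \eqref{MDynamic}, for $t\ge\nu$ the process $\exp\{\int_\nu^t e^{-rs}h(s,L_s+\xi)\1_{[s\ge\nu]}\,ds\}\,Y_t(\nu,\xi)$ is the Snell envelope of $\exp\{\int_\nu^t e^{-rs}h(s,L_s+\xi)\1_{[s\ge\nu]}\,ds\}\,O_t(\nu,\xi)$; from its Mertens decomposition (the martingale part being continuous since we work with the Brownian filtration) and the Snell envelope facts of Appendix Part (I), together with the continuity of $t\mapsto\exp\{\int_\nu^t e^{-rs}h\,ds\}$, I would deduce that $Y(\nu,\xi)$ and $O(\nu,\xi)$ can jump only at predictable times and only downwards, and that at a predictable $T\ge\nu$ with $\Delta_T Y(\nu,\xi)<0$ one has $O_{T^-}(\nu,\xi)=Y_{T^-}(\nu,\xi)$ and $\Delta_T O(\nu,\xi)<0$. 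It thus suffices to show that no such $T$ exists.

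Fixing a predictable $T\ge\nu$ and setting $A_T(\xi):=\{\Delta_T Y(\nu,\xi)<0\}\in\Fc_T$, I would insert the formula for $O_t(\nu,\xi)$ into $\1_{A_T(\xi)}\{O_{T^-}(\nu,\xi)-O_T(\nu,\xi)\}$: the exponential weight $\exp\{\int_t^{t+\Delta}e^{-rs}h\,ds-e^{-r(t+\Delta)}\psi(\beta)\}$ being continuous in $t$ carries no jump, and $\max_\beta a_\beta-\max_\beta b_\beta\le\max_\beta(a_\beta-b_\beta)$ bounds the jump of $O(\nu,\xi)$ at $T$ by $\max_{\beta\in U}\E[C^\beta_T(Y_{(T+\Delta)^-}(\nu,\xi+\beta)-Y_{T+\Delta}(\nu,\xi+\beta))\mid\Fc_T]$, where $C^\beta_T>0$ is that weight evaluated at $t=T$. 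Since $Y_{T+\Delta}(\nu,\xi+\beta)\ge O_{T+\Delta}(\nu,\xi+\beta)$ while on $A_{T+\Delta}(\xi+\beta)$ the left limits of $Y$ and $O$ coincide and off this set the increment vanishes, this is in turn controlled by the jump of $O(\nu,\xi+\beta)$ at $T+\Delta$. Choosing, exactly as $\beta_0^n$ in Proposition \ref{53} and $\beta_1$ in Proposition \ref{Proposition_cont}, an $\Fc_{T+\Delta}$-measurable $U$-valued maximiser (partition $\Omega$ into the finitely many sets where the maximum is attained) and pushing the resulting indicators through the conditional expectations by means of \eqref{ydetermine}, then iterating $n$ times, I arrive at the bound: $\1_{A_T(\xi)}\{O_{T^-}(\nu,\xi)-O_T(\nu,\xi)\}$ is dominated by $\E\big[\1_{A_T(\xi)}\big(\prod_{k=1}^n C^{\beta_k}_{T+(k-1)\Delta}\big)\prod_{k=1}^n\1_{A_{T+k\Delta}(\xi+\beta_1+\cdots+\beta_k)}\big(O_{(T+n\Delta)^-}(\nu,\xi+\beta_1+\cdots+\beta_n)-O_{T+n\Delta}(\nu,\xi+\beta_1+\cdots+\beta_n)\big)\mid\Fc_T\big]$, where the $\beta_k$ are $U$-valued and $\Fc_{T+k\Delta}$-measurable.

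The main obstacle, and the only genuine difference from the linear case, is that iterating the $\max$-inequality now accumulates a product of exponential weights. I would dispose of it by noting that $0\le\int_t^{t+\Delta}e^{-rs}h\,ds\le\tfrac{\gamma}{r}e^{-rt}$ and $\psi\ge 0$ give $0<C^\beta_t\le\exp(\tfrac{\gamma}{r}e^{-rt})$, whence $\prod_{k=1}^n C^{\beta_k}_{T+(k-1)\Delta}\le\exp\big(\tfrac{\gamma}{r}\sum_{k\ge 0}e^{-r(T+k\Delta)}\big)\le\exp\big(\tfrac{\gamma}{r(1-e^{-r\Delta})}\big)$, a finite deterministic constant independent of $n$ and of all the data. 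Since the remaining factor is bounded in absolute value by $2\sup_{s\ge T+n\Delta}|O_s(\nu,\cdot)-1|\to 0$ as $n\to\infty$, uniformly in its arguments by the first paragraph, the whole bound tends to $0$, so $\1_{A_T(\xi)}\{O_{T^-}(\nu,\xi)-O_T(\nu,\xi)\}=0$. As the jump is strictly positive on $A_T(\xi)$, this forces $\P(A_T(\xi))=0$; since $T$ was an arbitrary predictable stopping time, $Y(\nu,\xi)$ has no jumps and is therefore continuous. The only other delicate points — the identity $O_{T^-}=Y_{T^-}$ at a predictable downward jump of $Y$, and the commutation of left limits with the projections defining $O(\nu,\xi)$ — are handled by quoting Appendix Parts (I) and (II), exactly as in the proof of Proposition \ref{Proposition_cont}.
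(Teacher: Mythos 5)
Your proposal follows the paper's proof of this proposition almost step for step: predictable jump time $T$, the identity $O_{T^{-}}(\nu,\xi)=Y_{T^{-}}(\nu,\xi)$ from the Snell envelope properties, the $\max$-difference inequality, the measurable selection of an $\Fc_{T+\Delta}$-measurable maximiser $\beta_1$ via \eqref{ydetermine}, and the $n$-fold iteration producing a product of indicators times the jump of $O$ at $T+n\Delta$. Your treatment of the accumulated exponential weights is also fine: the paper gets the telescoping bound $\exp\{\frac{\gamma}{r}(e^{-rT}-e^{-r(T+n\Delta)})\}\le e^{\gamma/r}$, while your cruder geometric-series bound $\exp(\frac{\gamma}{r(1-e^{-r\Delta})})$ serves the same purpose, namely a finite constant independent of $n$.

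One claim in your first paragraph is wrong as stated: $O_t(\nu,\xi)\ge 1$ fails, because the cost term contributes the factor $\exp\{-e^{-r(t+\Delta)}\psi(\beta)\}<1$ and $\psi\ge k>0$. The correct two-sided bound is
$\exp\{-\|\psi\|_\infty e^{-r(t+\Delta)}\}\le O_t(\nu,\xi)\le\exp\{\frac{\gamma}{r}e^{-rt}\}$,
using $Y\ge Y^{0}\ge 1$ and the finiteness of $U$; this still gives $\sup_{s\ge M}|O_s(\nu,\xi)-1|\to 0$ uniformly in $(\nu,\xi)$, so your conclusion survives with this repair. Interestingly, once corrected, your route through the final limit is genuinely simpler than the paper's: the paper only has the one-sided estimate $\limsup_n O_{(T+n\Delta)^{-}}\le 1$ and must work for the lower direction, proving $Y_{T+n\Delta}(\nu,\xi+\Sigma_n+\beta)\to 1$ pointwise, deducing $L^1$-convergence $O_{T+n\Delta}\to 1$ by dominated convergence, extracting an a.s.\ convergent subsequence, and invoking conditional Fatou in \eqref{cont4}. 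Your deterministic uniform bound makes the difference $O_{(T+n\Delta)^{-}}-O_{T+n\Delta}$ tend to $0$ outright, so the conditional expectation tends to $0$ without any subsequence or Fatou argument. Apart from the incorrect lower bound, which you should fix as indicated, the proof is sound.
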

\begin{proof}
The proof is similar to the one of Proposition \ref{Proposition_cont}. First let us notice that the process $(O_{t}(\nu,\xi))_{t \geq 0}$ is c\`adl\`ag since $Y(\nu,\xi)$ is c\`adl\`ag (see Appendix Part (II)). Next, let $T$ be a predictable stopping time such that $\Delta_T Y(\nu,\xi)<0$. This implies that the process
$(O_t(\nu,\xi))_{t\geq 0}$ has a negative jump at $T$ and $O_{T^{-}}(\nu,\xi)=Y_{T^{-}}(\nu,\xi)$ (see Appendix, Part (I)).
 Therefore,
\begin{eqnarray*}
&{}&O_{T^{-}}(\nu,\xi)- O_{T}(\nu,\xi)\\
&= & \max_{\beta \in U} \bigg\lbrace \E \bigg\lbrack \exp \bigg\lbrace \int_{T}^{T+\Delta}e^{-rs}h(s,L_{s}+\xi)\1_{[s\geq \nu]}ds -e^{-r(T+\Delta)}\psi(\beta)\bigg\rbrace \times Y_{(T+\Delta)^{-}}(\nu,\xi+\beta)|\Fc_{T}\bigg\rbrack \bigg\rbrace  \\& {} &\quad -  \max_{\beta \in U} \bigg\lbrace \E \bigg\lbrack \exp \bigg\lbrace \int_{T}^{T+\Delta}e^{-rs}h(s,L_{s}+\xi)\1_{[s\geq \nu]}ds - e^{-r(T+\Delta)}\psi(\beta)\bigg\rbrace Y_{T+\Delta}(\nu,\xi+\beta)|\Fc_{T}\bigg\rbrack \bigg\rbrace \\
&\leq &
\max_{\beta \in U} \bigg\lbrace \E \bigg\lbrack \exp \bigg\lbrace \int_{T}^{T+\Delta}e^{-rs}h(s,L_{s}+\xi)\1_{[s\geq \nu]}ds -e^{-r(T+\Delta)}\psi(\beta)\bigg\rbrace \\ &{} &\qquad \qquad \times \bigg( Y_{(T+\Delta)^{-}}(\nu,\xi+\beta)-Y_{T+\Delta}(\nu,\xi+\beta)\bigg)|\Fc_{T}\bigg\rbrack \bigg\rbrace \\
 &=&
 \max_{\beta \in U} \bigg\lbrace \E \bigg\lbrack  \1_{A_{T+\Delta}(\xi+\beta)} \exp \bigg\lbrace \int_{T}^{T+\Delta}e^{-rs}h(s,L_{s}+\xi)\1_{[s\geq \nu]}ds -e^{-r(T+\Delta)}\psi(\beta)\bigg\rbrace \\ & {} &\qquad\qquad\times  \bigg( Y_{(T+\Delta)^{-}}(\nu,\xi+\beta)-Y_{T+\Delta}(\nu,\xi+\beta)\bigg)|\Fc_{T}\bigg\rbrack \bigg\rbrace \\
 &\leq &
 \max_{\beta \in U} \bigg\lbrace \E \bigg\lbrack  \1_{A_{T+\Delta}(\xi+\beta)} \exp \bigg\lbrace \int_{T}^{T+\Delta}e^{-rs} \gamma ds -ke^{-r(T+\Delta)} ds \bigg\rbrace \\ & {} &\qquad\qquad\times \bigg( Y_{(T+\Delta)^{-}}(\nu,\xi+\beta)-Y_{T+\Delta}(\nu,\xi+\beta)\bigg)|\Fc_{T}\bigg\rbrack \bigg\rbrace \\
  &\leq &
 \max_{\beta \in U} \bigg\lbrace \E \bigg\lbrack  \1_{A_{T+\Delta}(\xi+\beta)} \exp \bigg\lbrace \frac{\gamma}{r}(e^{-rT}-e^{-r(T+\Delta)}) \bigg\rbrace \\ & {} &\qquad\qquad\times \bigg( Y_{(T+\Delta)^{-}}(\nu,\xi+\beta)-Y_{T+\Delta}(\nu,\xi+\beta)\bigg)|\Fc_{T}\bigg\rbrack \bigg\rbrace \\
\end{eqnarray*}
where for any predictable stopping time $T\geq \nu$ and $\xi$ an $\Fc_\nu$-measurable r.v.\\ $A_T(\xi):=\{\omega \in \Omega, \Delta_TY(\nu,\xi)<0\}$ which belongs to $\Fc_T$. Therefore,
\begin{eqnarray}\label{cont1} \nonumber
\1_{A_{T}(\xi)}\{O_{T^{-}}(\nu,\xi)- O_{T}(\nu,\xi)\} &\leq &
 \max_{\beta \in U} \bigg\lbrace \E \bigg\lbrack \1_{A_{T}(\xi)} \times \1_{A_{T+\Delta}(\xi+\beta)} \exp \bigg\lbrace \frac{\gamma}{r}(e^{-rT}-e^{-r(T+\Delta)}) \bigg\rbrace \\ &\times & \bigg( Y_{(T+\Delta)^{-}}(\nu,\xi+\beta)-Y_{T+\Delta}(\nu,\xi+\beta)\bigg)|\Fc_{T}\bigg\rbrack \bigg\rbrace.
\end{eqnarray}
We note that there exists at least one $\beta \in U$ such that
the right-hand side is positive. Otherwise the left-hand side is
null and this is a contradiction. Since 
$Y_{T+\Delta}(\nu,\xi+\beta)\geq O_{T+\Delta}(\nu,\xi+\beta)$ and on
the set $A_{T+\Delta}(\xi+\beta)$,\\
$Y_{(T+\Delta)^{-}}(\nu,\xi+\beta)=O_{(T+\Delta)^{-}}(\nu,\xi+\beta)$.
Therefore, \eqref{cont1} implies
\begin{eqnarray*}
&{}&\1_{A_{T}(\xi)}\{O_{T^{-}}(\nu,\xi)-O_{T}(\nu,\xi)\}  \\ &{}&\le
\max_{\beta \in U} \bigg\lbrace \E \bigg\lbrack \1_{A_{T}(\xi)} \times \1_{A_{T+\Delta}(\xi+\beta)} \exp \bigg\lbrace \frac{\gamma}{r}(e^{-rT}-e^{-r(T+\Delta)}) \bigg\rbrace \\ &{}& \qquad \times  \bigg( O_{(T+\Delta)^{-}}(\nu,\xi+\beta)-O_{T+\Delta}(\nu,\xi+\beta)\bigg)|\Fc_{T}\bigg\rbrack \bigg\rbrace \\
 &{}&  \leq  \E \bigg\lbrack \1_{A_{T}(\xi)}\times \exp \bigg\lbrace \frac{\gamma}{r}(e^{-rT}-e^{-r(T+\Delta)}) \bigg\rbrace \\ &{}& \qquad \times  \max_{\beta \in U}\bigg\lbrace \E \bigg\lbrack \1_{A_{T+\Delta}(\xi+\beta)} \bigg\lbrace O_{(T+\Delta)^{-}}(\nu,\xi+\beta) - O_{T+\Delta}(\nu,\xi+\beta) \bigg\rbrace|\Fc_{T+\Delta} \bigg\rbrack \bigg\rbrace |\Fc_{T} \bigg\rbrack \\
&{}&  \leq  \E \bigg\lbrack \1_{A_{T}(\xi)} \exp \bigg\lbrace \frac{\gamma}{r}(e^{-rT}-e^{-r(T+\Delta)})\bigg\rbrace \times  \E \bigg\lbrack \1_{A_{T+\Delta}(\xi+\beta_{1})} \bigg\lbrace O_{(T+\Delta)^{-}}(\nu,\xi+\beta_{1}) \\ &{}& \qquad -O_{T+\Delta}(\nu,\xi+\beta_{1}) \bigg\rbrace|\Fc_{T+\Delta} \bigg\rbrack |\Fc_{T} \bigg\rbrack,
\end{eqnarray*}
where $\beta_1$ is a r.v. $\Fc_{T+\Delta}$-measurable valued in $U$.
The construction of the r.v. $\beta_1$ is similar as the one in the
proof of Proposition \ref{Proposition_cont} (see \eqref{inside3}) by using the property
\eqref{ydetermine}. Note that, as previously, the left-hand side is
not null. Next, since we have that  $A_{T}(\xi)$ and $\left( \exp
\bigg\lbrace \frac{\gamma}{r}(e^{-rT}-e^{-r(T+\Delta)}) \bigg\rbrace
\right) $ are  also $\Fc_{T+\Delta}$-measurable then
\begin{eqnarray} \nonumber \label{cont2}
\1_{A_{T}(\xi)}\{O_{T^{-}}(\nu,\xi)- O_{T}(\nu,\xi)\} &\leq &
 \E \bigg\lbrack \1_{A_{T}(\xi)}\times \1_{A_{T+\Delta}(\xi+\beta_{1})} \exp \bigg\lbrace \frac{\gamma}{r}(e^{-rT}-e^{-r(T+\Delta)}) \bigg\rbrace \\ &\times & \bigg\lbrace O_{(T+\Delta)^{-}}(\nu,\xi+\beta_{1})-O_{T+\Delta}(\nu,\xi+\beta_{1})
\bigg\rbrace |\Fc_{T} \bigg\rbrack .
\end{eqnarray}
Now by repeating this reasoning one deduces the existence of a sequence of $U$-valued random variables
$(\beta_k)_{k\ge 1}$ such that $\beta_k$ is $\Fc_{T+k\Delta}$-measurable and for any $n\ge 1$,
\begin{equation}\label{cont3}
\begin{array}{l}
\1_{A_{T}(\xi)}\{O_{T^{-}}(\nu,\xi)- O_{T}(\nu,\xi)\} \\
\qquad \leq
 \E \bigg\lbrack \1_{A_{T}(\xi)}\bigg\lbrace \prod_{k=1}^{k=n} 1_{A_{T+k\Delta}(\xi+\beta_1+ \cdots +\beta_k)} \exp \bigg\lbrace \frac{\gamma}{r}(e^{-rT}-e^{-r(T+n\Delta)}) \bigg\rbrace \\ \qquad \qquad \times \bigg( O_{(T+n\Delta)^{-}}(\nu,\xi+\beta_1+ \cdots +\beta_n)-O_{T+n\Delta}(\nu,\xi+\beta_1+ \cdots +\beta_n)\bigg) \bigg\rbrace |\Fc_{T}\bigg\rbrack.
\end{array}
\end{equation}
But, \begin{equation}
 |O_t(\nu,\xi)|\leq \exp \left\lbrace \frac{\gamma}{r}\{e^{-rt}-e^{-r(t+\Delta)}\}
+\|\psi\|e^{-r(t+\Delta)}+\frac{\gamma}{r} e^{-r(t+\Delta)}\right\rbrace,
\end{equation}then, setting $\Sigma_n=\beta_1+ \cdots +\beta_n$ $(n\ge 1)$, one obviously has
$$\limsup_{n\rightarrow \infty}O_{(T+n\Delta)^{-}}(\nu,\xi+\Sigma_n)\le 1.$$
On the other hand there exists a subsequence $(n_k)_{k\ge 1}$ such that
\begin{equation}\label{tendre1}\P\mbox{-}a.s.\,\quad
\underset{k\to\infty}{\lim}O_{T+n_k\Delta}(\nu,\xi+\Sigma_{n_k})=1.
\end{equation}
Indeed, by construction and \eqref{estimyexp}, for any $\beta \in U$, $\P-a.s.$,
$$Y^0_{T+n\Delta}(\nu, \xi+\Sigma_n+\beta)\le  Y_{T+n\Delta}(\nu, \xi+\Sigma_n+\beta)\leq \exp(\frac{\gamma e^{-r(T+n\Delta)}}{r}).$$As $\underset{n\to\infty}{\lim}Y^0_{T+n\Delta}(\nu, \xi+\Sigma_n+\beta)=1$, then \begin{equation}\label{tendre2}\lim_{n\rightarrow \infty}Y_{T+n\Delta}(\nu, \xi+\Sigma_n+\beta)=1.\end{equation}
Next recall the definition of the process $O(\nu,\xi)$ to obtain
that
\begin{align}
&\nonumber \E[|O_{T+n\Delta}(\nu,\xi+\Sigma_{n})-1|]\\\nonumber&=\E\bigg\lbrack|\max_{\beta \in U}\bigg\lbrace\E \bigg\lbrack \bigg\lbrace \exp \bigg\lbrace \int_{T}^{T+n\Delta}e^{-rs}h(s,L_{s}+\xi+\Sigma_{n})\1_{[s\geq \nu]}ds \\\nonumber&- e^{-r(T+n\Delta)}\psi(\beta)\bigg\rbrace Y_{T+n\Delta}(\nu,\xi+\Sigma_{n}+\beta)-1\bigg\rbrace|\Fc_{T+n\Delta}\bigg\rbrack |\bigg\rbrack\\
&\nonumber\le \sum_{\beta \in U}\E\bigg\lbrack | \exp \bigg\lbrace \int_{T}^{T+n\Delta}e^{-rs}h(s,L_{s}+\xi+\Sigma_{n})\1_{[s\geq \nu]}ds \\\nonumber&\qquad\qquad - e^{-r(T+n\Delta)}\psi(\beta)\bigg\rbrace Y_{T+n\Delta}(\nu,\xi+\Sigma_{n}+\beta)-1|\bigg\rbrack  .
\end{align}
But, by the Lebesgue Theorem and \eqref{tendre2}, the last term converges to 0 as $n\rightarrow \infty$, therefore one can substract a subsequence $(n_k)_{k\ge 1}$ such that \eqref{tendre1} holds.

Let us now consider this subsequence which we still denote by $\{n\}$ and go back now to \eqref{cont3}. By using the conditional Fatou's Lemma we obtain
\begin{equation}\label{cont4}
\begin{array}{l}
\1_{A_{T}(\xi)}\{O_{T^{-}}(\nu,\xi)- O_{T}(\nu,\xi)\} \\
\qquad \leq \underset{n\to\infty}{\limsup} \E \bigg\lbrack \1_{A_{T}(\xi)}\bigg\lbrace \prod_{k=1}^{k=n} 1_{A_{T+k\Delta}(\xi+\beta_1+ \cdots +\beta_k)} \exp \bigg\lbrace \frac{\gamma}{r}(e^{-rT}-e^{-r(T+n\Delta)}) \bigg\rbrace \\ \qquad \qquad \times \bigg( O_{(T+n\Delta)^{-}}(\nu,\xi+\beta_1+ \cdots +\beta_n)-O_{T+n\Delta}(\nu,\xi+\beta_1+ \cdots +\beta_n)\bigg) \bigg\rbrace |\Fc_{T}\bigg\rbrack\\
\qquad \leq
 \E \bigg\lbrack \underset{n\to\infty}{\limsup}\1_{A_{T}(\xi)}\bigg\lbrace \prod_{k=1}^{k=n} 1_{A_{T+k\Delta}(\xi+\beta_1+ \cdots +\beta_k)} \exp \bigg\lbrace \frac{\gamma}{r}(e^{-rT}-e^{-r(T+n\Delta)}) \bigg\rbrace \\ \qquad \qquad \times \bigg( O_{(T+n\Delta)^{-}}(\nu,\xi+\beta_1+ \cdots +\beta_n)-O_{T+n\Delta}(\nu,\xi+\beta_1+ \cdots +\beta_n)\bigg) \bigg\rbrace |\Fc_{T}\bigg\rbrack\\
\qquad \leq
 \E \bigg\lbrack e^{-rT}\lbrace\underset{n\to\infty}{\limsup} \,O_{(T+n\Delta)^{-}}(\nu,\xi+\beta_1+ \cdots +\beta_n)-\\\qquad\qquad\qquad\qquad\underset{n\to\infty}{\lim}O_{T+n\Delta}(\nu,\xi+\beta_1+ \cdots +\beta_n) |\Fc_{T}\bigg\rbrack \le 0.
\end{array}
\end{equation}
This in turn  implies that
$$
\1_{A_{T}(\xi)}\{O_{T^{-}}(\nu,\xi)-O_{T}(\nu,\xi)\}=0,
$$
which leads to  a contradiction. Therefore, the process $Y(\nu,\xi)$ is continuous.
\end{proof}
We are now ready to give the main result of this section.
\begin{Theorem} Assume that \ref{assumpt} hold. Let us define  the  strategy $\delta^{*}=(\tau_{n}^{*},\beta_{n}^{*})_{n\geq 0}$  by
$$\tau_{0}^{*}= \begin{cases}\inf\{ s\in[0,\infty), O_{s}(0,0)\geq Y_{s}(0,0)\},\\  + \infty \quad \text{otherwise} \end{cases}$$ \\and $\beta_{0}^{*}$ is an $\mathcal{F}_{\tau_{0}^{*}}$-r.v. valued in $U$ such that
$$ O_{\tau_{0}^{*}}(0,0):= \E\left[\exp \bigg\lbrace \Int_{\tau_{0}^{*}}^{\tau_{0}^{*}+\Delta}  e^{-rs}  h(s,L_{s})ds- e^{-r(\tau_{0}^{*}+\Delta)}\psi(\beta_{0}^{*})\bigg\rbrace Y_{\tau_{0}^{*}+\Delta}(\tau_{0}^{*},\beta_{0}^{*})|\mathcal{F}_{\tau_{0}^{*}}\right].$$For $n \geq 1$,
 $$ \tau_{n}^{*}=\inf\bigg\lbrace s\geq \tau_{n-1}^{*}+\Delta , O_{s}(\tau_{n-1}^{*},\beta_{0}^{*}+\cdots +\beta_{n-1}^{*} )\geq Y_{s}(\tau_{n-1}^{*},\beta_{0}^{*}+\cdots +\beta_{n-1}^{*} )\bigg\rbrace,$$
and $\beta_{n}^{*}$ is an $\mathcal{F}_{\tau_{n}^{*}}$-r.v. valued in $U$ such that
 \begin{eqnarray*}
 O_{\tau_{n}^{*}}(\tau_{n-1}^{*},\beta_{0}^{*}+\cdots +\beta_{n-1}^{*} )&=&\E \bigg\lbrack \exp\bigg\lbrace \Int_{\tau_{n}^{*}}^{\tau_{n}^{*}+\Delta}  e^{-rs}  h(s,L_{s}+\beta_{0}^{*}+\cdots +\beta_{n-1}^{*})ds \\ &-& e^{-r(\tau_{n}^{*}+\Delta)}\psi(\beta_{n}^{*})\bigg\rbrace Y_{\tau_{n}^{*}+\Delta}(\tau_{n}^{*},\beta_{0}^{*}+\cdots +\beta_{n-1}^{*}+\beta_{n}^{*} )|\mathcal{F}_{\tau_{n}^{*}}\bigg\rbrack.
 \end{eqnarray*}
 Then, the strategy $\delta^{*}=(\tau_{n}^{*},\beta_{n}^{*})_{n\geq 0}$ is optimal for the risk-sensitive impulse control problem, i.e.,$$ Y_{0}(0,0)=\Sup_{\delta \in \mathcal{A}}J(\delta)=J(\delta^{*}).$$
\end{Theorem}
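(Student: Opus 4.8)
The plan is to repeat, \emph{mutatis mutandis}, the proof of Theorem~\ref{Theorem_cont}, replacing the additive discounted reward by the multiplicative (exponential) one and invoking the continuity of $Y(\nu,\xi)$ and of $O(\nu,\xi)$ just established. As there, the proof splits into two halves: one shows first that $Y_0(0,0)=J(\delta^*)$, and then that $Y_0(0,0)\ge J(\delta')$ for every admissible $\delta'$.

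\textbf{Step 1 ($Y_0(0,0)=J(\delta^*)$).} By the preceding Proposition $Y(0,0)$, hence $O(0,0)$ (Appendix, Part (II)), is continuous on $[0,\infty]$, with $Y_\infty(0,0)=O_\infty(0,0)=1$ by \eqref{estimyexp} and $h\ge 0$. Therefore $\big(\exp\{\int_0^t e^{-rs}h(s,L_s)\,ds\}\,O_t(0,0)\big)_{t\in[0,\infty]}$ is continuous, its Snell envelope is $\big(\exp\{\int_0^t e^{-rs}h(s,L_s)\,ds\}\,Y_t(0,0)\big)_{t\in[0,\infty]}$ by \eqref{MDynamic}, and Part (I) of the Appendix shows that $\tau_0^*$ is optimal after $0$, so that $Y_0(0,0)=\E\big[\exp\{\int_0^{\tau_0^*}e^{-rs}h(s,L_s)\,ds\}\,O_{\tau_0^*}(0,0)\big]$. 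Using Proposition~\ref{75}-iii) to replace $Y_{\tau_0^*+\Delta}(0,\beta)$ by $Y_{\tau_0^*+\Delta}(\tau_0^*,\beta)$ and the selection identity \eqref{ydetermine}, one checks, exactly as in \eqref{inside1}--\eqref{inside2}, that the maximum over $\beta\in U$ defining $O_{\tau_0^*}(0,0)$ is attained at $\beta_0^*$; since $\exp\{\int_0^{\tau_0^*}e^{-rs}h(s,L_s)\,ds\}$ is $\Fc_{\tau_0^*}$-measurable, the tower property then yields
$$
Y_0(0,0)=\E\Big[\exp\Big\{\int_0^{\tau_0^*+\Delta}e^{-rs}h(s,L_s)\,ds-e^{-r(\tau_0^*+\Delta)}\psi(\beta_0^*)\Big\}\,Y_{\tau_0^*+\Delta}(\tau_0^*,\beta_0^*)\Big].
$$
Applying \eqref{MDynamic} to $Y_{\tau_0^*+\Delta}(\tau_0^*,\beta_0^*)$ and using the continuity of $Y(\tau_0^*,\beta_0^*)$ and $O(\tau_0^*,\beta_0^*)$ so that $\tau_1^*$ is optimal after $\tau_0^*+\Delta$, inserting the resulting representation and iterating, one reaches after $n$ steps the exact multiplicative counterpart of the displayed formula in the proof of Theorem~\ref{Theorem_cont}, ending with the factor $Y_{\tau_n^*+\Delta}(\tau_n^*,\beta_0^*+\cdots+\beta_n^*)$. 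Since $\tau_n^*\ge n\Delta\to\infty$ while $1\le Y_t(\nu,\xi)\le\exp(\gamma e^{-rt}/r)$ uniformly in $(\nu,\xi)$ (by \eqref{estimyexp} and $h\ge0$), this factor tends to $1$ a.s.; the exponent converges a.s. to $\int_0^\infty e^{-rs}h(s,L_s^{\delta^*})\,ds-\sum_{k\ge0}e^{-r(\tau_k^*+\Delta)}\psi(\beta_k^*)$ (monotone convergence of both pieces, $\psi$ being bounded on $U$), and the integrand is bounded by the constant $\exp(2\gamma/r)$; dominated convergence then gives $Y_0(0,0)=J(\delta^*)$.

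\textbf{Step 2 ($Y_0(0,0)\ge J(\delta')$ for $\delta'=(\tau_n',\beta_n')_{n\ge0}\in\mathcal{A}$).} The supermartingale (Snell-envelope) property contained in \eqref{MDynamic} gives $Y_0(0,0)\ge\E\big[\exp\{\int_0^{\tau_0'}e^{-rs}h(s,L_s)\,ds\}\,O_{\tau_0'}(0,0)\big]$; bounding the maximum over $\beta\in U$ in $O_{\tau_0'}(0,0)$ from below by its value at the $\Fc_{\tau_0'}$-measurable selection $\beta_0'$ (again via Proposition~\ref{75}-iii) and \eqref{ydetermine}, as in \eqref{inside2}) and then iterating — using at each stage $Y_{\tau_k'+\Delta}(\tau_k',\cdot)\ge\E\big[\exp\{\int\cdots\}O_{\tau_{k+1}'}(\cdots)\mid\Fc_{\tau_k'+\Delta}\big]$ from \eqref{MDynamic} together with the analogous lower bound for $O_{\tau_{k+1}'}$ — one obtains, after $n$ steps, the multiplicative counterpart of the penultimate display in the proof of Theorem~\ref{Theorem_cont}, carrying a trailing factor $Y_{\tau_n'+\Delta}(\tau_n',\beta_0'+\cdots+\beta_n')$. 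Since $\delta'\in\mathcal{A}$ forces $\tau_n'\to\infty$, this factor is $\ge1$ and the integrand is nonnegative; letting $n\to\infty$ and applying Fatou's lemma, together with the a.s. convergence of the exponent to $\int_0^\infty e^{-rs}h(s,L_s^{\delta'})\,ds-\sum_{k\ge0}e^{-r(\tau_k'+\Delta)}\psi(\beta_k')$, yields $Y_0(0,0)\ge J(\delta')$. Combining the two steps shows that $\delta^*$ is optimal and that $Y_0(0,0)$ is the value of the control problem.

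\textbf{Main obstacle.} The delicate part is not any single computation but rather, first, the verification that each $\tau_n^*$ is genuinely an optimal stopping time after $\tau_{n-1}^*+\Delta$: this rests on the continuity of $Y(\nu,\xi)$ and $O(\nu,\xi)$ — i.e. on the preceding Proposition — and on the boundary identity $Y_\infty(\nu,\xi)=O_\infty(\nu,\xi)=1$, which guarantees, via Part (I) of the Appendix, that the first entry time into the obstacle is optimal; and, second, the passage to the limit $n\to\infty$, which needs the uniform two-sided bounds $1\le Y_t(\nu,\xi)\le\exp(\gamma e^{-rt}/r)$ both to force $Y_{\tau_n^*+\Delta}\to1$ and to provide domination. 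The compatibility of the measurable selections $\beta_n^*$ and $\beta_n'$ with the conditional expectations is handled, exactly as in Propositions~\ref{53} and~\ref{Proposition_cont}, by means of \eqref{ydetermine}.
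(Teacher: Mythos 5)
Your proposal is correct and follows essentially the same route as the paper's proof: continuity of $Y(\nu,\xi)$ and $O(\nu,\xi)$ to make each $\tau_n^*$ an optimal first-entry time, the measurable-selection identity \eqref{ydetermine} to justify the choice of $\beta_n^*$ (resp. to bound the maximum from below by the value at $\beta_n'$), iteration of the multiplicative dynamic-programming representation \eqref{MDynamic}, and passage to the limit via the uniform bounds $Y^0\le Y\le\exp(\gamma e^{-rt}/r)$ forcing $Y_{\tau_n+\Delta}\to1$ together with the convergence of the cost series (which uses $\tau_{n+1}-\tau_n\ge\Delta$). Your explicit appeals to dominated convergence in Step 1 and Fatou in Step 2 merely make precise the limit passages the paper treats more informally.
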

\begin{proof} First let us make precise the way the r.v. $\beta_{n}^{*}$ is
constructed. For $i=1,\ldots,p$, let $\A_i$ be the  set
\begin{align}
 \nn &\A_i:=\bigg\lbrace
   \max\limits_{\beta\in U}\E \bigg\lbrack \exp\bigg\lbrace
- e^{-r(\tau_{n}^{*}+\Delta)} \psi(\beta)\bigg\rbrace \times
Y_{\tau_{n}^{*}+\Delta}(\tau_{n}^{*}, \beta_{0}^{*}+\cdots
+\beta_{n-1}^{*}+\beta)|
\mathcal{F}_{\tau_{n}^{*}}\bigg\rbrack\\&\nn \qquad\qquad\qquad = \E
\bigg\lbrack \exp\bigg\lbrace - e^{-r(\tau_{n}^{*}+\Delta)}
\psi(\beta_i)\bigg\rbrace \times
Y_{\tau_{n}^{*}+\Delta}(\tau_{n}^{*}, \beta_{0}^{*}+\cdots
+\beta_{n-1}^{*}+\beta_i)| \mathcal{F}_{\tau_{n}^{*}}\bigg\rbrack
\bigg\rbrace.
\end{align}
We define $\beta_{n}^{*}$ as
$$
\beta_{n}^{*}=\beta_1 \mbox{ on } \A_1 \text{ and  }
\beta_{n}^{*}=\beta_j \mbox{ on
}\A_j\backslash\bigcup_{k=1}^{j-1}\A_k \text{ for }j=2,\ldots, p.
$$
Thus, using \eqref{ydetermine} and as in \eqref{inside1},
$\beta_{n}^{*}$ satisfies
\begin{align}&\nn \E
\bigg\lbrack \exp\bigg\lbrace - e^{-r(\tau_{n}^{*}+\Delta)}
\psi(\beta_n^*)\bigg\rbrace \times
Y_{\tau_{n}^{*}+\Delta}(\tau_{n}^{*}, \beta_{0}^{*}+\cdots
+\beta_{n-1}^{*}+\beta_n^*)| \mathcal{F}_{\tau_{n}^{*}}\bigg\rbrack
\\& =\max\limits_{\beta\in U}\E \bigg\lbrack \exp\bigg\lbrace
- e^{-r(\tau_{n}^{*}+\Delta)} \psi(\beta)\bigg\rbrace \times
Y_{\tau_{n}^{*}+\Delta}(\tau_{n}^{*}, \beta_{0}^{*}+\cdots
+\beta_{n-1}^{*}+\beta)| \mathcal{F}_{\tau_{n}^{*}}\bigg\rbrack .
\end{align}
 
 The proof of the theorem is performed in the following steps.
 
 \medskip
\noindent  \underline{Step 1}: $Y_{0}(0,0)= J(\delta^{*}) $. \\

\noindent  We have 
\begin{equation}
 Y_{0}(0,0)= \esssup\limits_{\tau \in
\mathcal{T}_0}\E\left[\exp \left\lbrace\int_{0}^{\tau} e^{-rs}h(s,L_{s})
ds \right\rbrace  O_{\tau}(0,0)\right].
\end{equation}
But, since for any $\nu,\xi$, the process $Y(\nu,\xi)$ is continuous, then the stopping time $\tau_{0}^{*}$ is optimal after $0$. This yields
\begin{equation}\label{equationY00}
 Y_{0}(0,0)= \E\left[\exp\left\lbrace\int_{0}^{\tau_{0}^{*}} e^{-rs}h(s,L_{s})
ds \right\rbrace O_{\tau_{0}^{*}}(0,0)\right],
\end{equation}
where
\begin{eqnarray*}
O_{\tau_{0}^{*}}(0,0) &=&\max\limits_{\beta\in U} \bigg\lbrace \E \bigg\lbrack \exp \bigg\lbrace \Int_{\tau_{0}^{*}}^{\tau_{0}^{*}+\Delta}  e^{-rs}  h(s,L_{s})ds - e^{-r(\tau_{0}^{*}+\Delta)}\psi(\beta) \bigg\rbrace Y_{\tau_{0}^{*}+\Delta}(0,\beta)|\mathcal{F}_{\tau_{0}^{*}} \bigg\rbrack  \bigg\rbrace \\
   &=&\max\limits_{\beta\in U} \bigg\lbrace \E \bigg\lbrack \exp
\bigg\lbrace \Int_{\tau_{0}^{*}}^{\tau_{0}^{*}+\Delta}  e^{-rs}
h(s,L_{s})ds - e^{-r(\tau_{0}^{*}+\Delta)}\psi(\beta) \bigg\rbrace
Y_{\tau_{0}^{*}+\Delta}(\tau_{0}^{*},\beta)|\mathcal{F}_{\tau_{0}^{*}}
\bigg\rbrack  \bigg\rbrace \\  &=& \E\bigg\lbrack \exp\bigg\lbrace
\Int_{\tau_{0}^{*}}^{\tau_{0}^{*}+\Delta}  e^{-rs}  h(s,L_{s})ds-
e^{-r(\tau_{0}^{*}+\Delta)}\psi(\beta_{0}^{*})\bigg\rbrace
Y_{\tau_{0}^{*}+\Delta}(\tau_{0}^{*},\beta_{0}^{*})|\mathcal{F}_{\tau_{0}^{*}}\bigg\rbrack,
\end{eqnarray*}
where we have used  Proposition \ref{75} iii) in the
last equality to replace $Y_{\tau_{0}^{*}+\Delta}(0,\beta)$ with
$Y_{\tau_{0}^{*}+\Delta}(\tau_{0}^{*},\beta)$. Hence,
\begin{eqnarray*}
 Y_{0}(0,0)
&=& \E\left[ \exp\bigg\lbrace \int_{0}^{\tau_{0}^{*}+\Delta} e^{-rs}h(s,L_{s}) ds -  e^{-r(\tau_{0}^{*}+\Delta)}\psi(\beta_{0}^{*})\bigg\rbrace Y_{\tau_{0}^{*}+\Delta}(\tau_{0}^{*},\beta_{0}^{*})\right].
\end{eqnarray*}
Similarly, we have 
\begin{eqnarray*}
Y_{\tau_{0}^{*}+\Delta}(\tau_{0}^{*},\beta_{0}^{*})
& =& \E\left[ \exp \bigg\lbrace \int_{\tau_{0}^{*}+\Delta}^{\tau_{1}^{*}+\Delta} e^{-rs}h(s,L_{s}+\beta_{0}^{*}) ds -  e^{-r(\tau_{1}^{*}+\Delta)}\psi(\beta_{1}^{*})\bigg\rbrace Y_{\tau_{1}^{*}+\Delta}(\tau_{1}^{*},\beta_{0}^{*}+\beta_{1}^{*})|\mathcal{F}_{\tau_{0}^{*}+\Delta}\right].
\end{eqnarray*}
Replacing this in (\ref{equationY00}), it follows that
\begin{eqnarray*}
Y_{0}(0,0) &=& \E \bigg\lbrack \exp \bigg\lbrace \int_{0}^{\tau_{0}^{*}+\Delta} e^{-rs}h(s,L_{s}) ds + \int_{\tau_{0}^{*}+\Delta}^{\tau_{1}^{*}+\Delta} e^{-rs}h(s,L_{s}+\beta_{0}^{*}) ds \\ & {}&\qquad\qquad  -   e^{-r(\tau_{0}^{*}+\Delta)}\psi(\beta_{0}^{*})-e^{-r(\tau_{1}^{*}+\Delta)}\psi(\beta_{1}^{*}) \bigg\rbrace Y_{\tau_{1}^{*}+\Delta}(\tau_{1}^{*},\beta_{0}^{*}+\beta_{1}^{*})\bigg\rbrack.
\end{eqnarray*}
Repeating this argument $n$ times, we obtain that
\begin{align}
Y_{0}(0,0) =& \E \bigg\lbrack \exp \bigg\lbrace \int_{0}^{\tau_{0}^{*}+\Delta} e^{-rs}h(s,L_{s}) ds  + \sum_{1\leq k \leq n} \int_{\tau_{k-1}^{*}+\Delta}^{\tau_{k}^{*}+\Delta} e^{-rs}h(s,L_{s}+\beta_{0}^{*}+\cdots+\beta_{k-1}^{*}) ds  \nonumber\\ &\qquad -  \sum_{k=0}^{n} e^{-r(\tau_{k}^{*}+\Delta)}\psi(\beta_{k}^{*})\bigg\rbrace Y_{\tau_{n}^{*}+\Delta}(\tau_{n}^{*},\beta_{0}^{*}+\cdots + \beta_{n}^{*})\bigg\rbrack.\lb{738}
\end{align}
But since $\P\{\tau^*_n\ge n\Delta\}=1$ then $\P$-a.s. the series
$
\sum_{n\geq 0 }e^{-r\tau_{n}^{*}}\psi (\beta_{n}^{*})
$
is convergent and \\$|\sum_{n\geq 0 }e^{-r\tau_{n}^{*}}\psi (\beta_{n}^{*})|\le C$ for some constant $C$.
On the other hand, by \eqref{estimyexp} and monotonicity, we have
$$Y^0_{\tau_{n}^{*}+\Delta}(\tau_{n}^{*},\beta_{0}^{*}+\cdots + \beta_{n}^{*})\le Y_{\tau_{n}^{*}+\Delta}(\tau_{n}^{*},\beta_{0}^{*}+\cdots + \beta_{n}^{*})\leq
\exp(\frac{\gamma e^{-r(\tau_{n}^{*}+\Delta)}}{r}).$$
As
$$\lim_{n\rightarrow \infty}Y^0_{\tau_{n}^{*}+\Delta}(\tau_{n}^{*},\beta_{0}^{*}+\cdots + \beta_{n}^{*})=1 \text{ and }\lim_{n\rightarrow \infty} \exp(\frac{\gamma e^{-r(\tau_{n}^{*}+\Delta)}}{r})=1,
$$
it follows that 
$$
\lim_{n\rightarrow \infty}Y_{\tau_{n}^{*}+\Delta}(\tau_{n}^{*},\beta_{0}^{*}+\cdots + \beta_{n}^{*})=1.
$$
Take now the limit w.r.t $n$ in the right-hand side of \eqref{738} to obtain that $Y_{0}(0,0)= J(\delta^{*})$.
\medskip

\noindent \underline{Step 2}: $J(\delta^{*})\geq J(\delta ^{\prime})$ for any other strategy  $\delta^{\prime}=(\tau^\prime_{n},\beta^\prime_{ n})_{n\geq 0} \in \mathcal{A}$. \\
\medskip

\noindent We have 
 $$ Y_{0}(0,0)\geq \E\left[ \exp\bigg\lbrace \int_{0}^{\tau_{0}^{\prime}} e^{-rs}h(s,L_{s})
ds \bigg\rbrace O_{\tau_{0}^{\prime}}(0,0)\right].
$$
Moreover, as in \eqref{inside2},
\begin{align}\nn O_{\tau_{0}^{\prime}}(0,0) =&\max\limits_{\beta \in U} \bigg\lbrace  \mathbb{E}\bigg\lbrack \exp\bigg \lbrace \int_{\tau_{0}^{\prime}}^{\tau_{0}^{\prime}+\Delta}e^{-rs}h(s,L_{s}+\xi) \1_{[s\geq \nu]}ds -e^{-r(t+\Delta)}\psi(\beta) \bigg\rbrace Y_{\tau_{0}^{\prime}+\Delta}(0,\beta)|\Fc_{\tau_{0}^{\prime}}\bigg\rbrack \bigg\rbrace.
\\\nn =&\max\limits_{\beta \in U} \bigg\lbrace  \mathbb{E}\bigg\lbrack \exp\bigg \lbrace \int_{\tau_{0}^{\prime}}^{\tau_{0}^{\prime}+\Delta}e^{-rs}h(s,L_{s}+\xi) \1_{[s\geq \nu]}ds -e^{-r(t+\Delta)}\psi(\beta) \bigg\rbrace Y_{\tau_{0}^{\prime}+\Delta}(\tau_{0}^{\prime},\beta)|\Fc_{\tau_{0}^{\prime}}\bigg\rbrack \bigg\rbrace.\\
\nn \geq & \E\left[ \exp\bigg\lbrace \Int_{\tau_{0}^{\prime}}^{\tau_{0}^{\prime}+\Delta}  e^{-rs}  h(s,L_{s})ds- e^{-r(\tau_{0}^{\prime}+\Delta)}\psi(\beta_{0}^{\prime})\bigg\rbrace  Y_{\tau_{0}^{\prime}+\Delta}(\tau_{0}^{\prime},\beta_{0}^{\prime})|\mathcal{F}_{\tau_{0}^{\prime}}\right],\end{align}
 since by Proposition \ref{75}-iii), $Y_{\tau_{0}^{\prime}+\Delta}(0,\beta)=Y_{\tau_{0}^{\prime}+\Delta}(\tau_{0}^{\prime},\beta)$ for any $\beta\in U$. Therefore, 
 \begin{eqnarray}
 Y_{0}(0,0)\geq \E\left[\exp \bigg\lbrace \int_{0}^{\tau_{0}^{\prime}+\Delta} e^{-rs}h(s,L_{s})ds - e^{-r(\tau_{0}^{\prime}+\Delta)}\psi(\beta_{0}^{\prime})\bigg\rbrace  Y_{\tau_{0}^{\prime}+\Delta}(\tau_{0}^{\prime},\beta_{0}^{\prime})\right].
\end{eqnarray}
In  a similar way,
\begin{eqnarray*}
Y_{\tau_{0}^{\prime}+\Delta}(\tau_{0}^{\prime},\beta_{0}^{\prime})& =& \esssup\limits_{\tau \in \mathcal{T}_{\tau_{0}^{\prime}+\Delta}}\E \left[\exp\bigg\lbrace
\int_{\tau_{0}^{\prime}+\Delta}^{\tau} e^{-rs}h(s,L_{s}+\beta_{0}^{\prime})ds \bigg\rbrace O_{\tau}(\tau_{0}^{\prime},\beta_{0}^{\prime})|\Fc_{\tau_{0}^{\prime}+\Delta}\right]\\
& \geq & \E \left[ \exp\bigg\lbrace \int_{\tau_{0}^{\prime}+\Delta}^{\tau_{1}^{\prime}} e^{-rs}h(s,L_{s}+\beta_{0}^{\prime})ds\bigg\rbrace  O_{\tau_{1}^{\prime}}(\tau_{0}^{\prime},\beta_{0}^{\prime})|\Fc_{\tau_{0}^{\prime}+\Delta}\right]\\
&\geq &\E \left[ \exp\bigg\lbrace \int_{\tau_{0}^{\prime}+\Delta}^{\tau_{1}^{\prime}+\Delta} e^{-rs}h(s,L_{s}+\beta_{0}^{\prime})ds-  e^{-r(\tau_{1}^{\prime}+\Delta)}\psi(\beta_{1}^{\prime})\bigg\rbrace  Y_{\tau_{1}^{\prime}+\Delta}(\tau_{1}^{\prime},\beta_{0}^{\prime}+\beta_{1}^{\prime})|\Fc_{\tau_{0}^{\prime}+\Delta}\right].
\end{eqnarray*}
Therefore,
\begin{eqnarray*}
Y_{0}(0,0) & \geq & \E \bigg\lbrack \exp \bigg\lbrace \int_{0}^{\tau_{0}^{\prime}+\Delta} e^{-rs}h(s,L_{s})ds+ \int_{\tau_{0}^{\prime}+\Delta}^{\tau_{1}^{\prime}+\Delta} e^{-rs}h(s,L_{s}+\beta_{0}^{\prime})ds \\ &-&  e^{-r(\tau_{0}^{\prime}+\Delta)}\psi(\beta_{0}^{\prime})- e^{-r(\tau_{1}^{\prime}+\Delta)}\psi(\beta_{1}^{\prime})\bigg\rbrace  Y_{\tau_{1}^{\prime}+\Delta}(\tau_{1}^{\prime},\beta_{0}^{\prime}+\beta_{1}^{\prime})\bigg\rbrack.
\end{eqnarray*}
 Repeat this argument $n$ times to obtain
\begin{eqnarray*}
Y_{0}(0,0) & \geq & \E \bigg\lbrack \exp\bigg\lbrace  \int_{0}^{\tau_{0}^{\prime}+\Delta} e^{-rs}h(s,L_{s})ds+\sum_{1\leq k \leq n}\int_{\tau_{k-1}^{\prime}+\Delta}^{\tau_{k}^{\prime}+\Delta} e^{-rs}h(s,L_{s}+\beta_{0}^{\prime}+\cdots + \beta_{k-1}^{\prime})ds \\ &-& \sum_{k=0}^{n}e^{-r(\tau_{k}^{\prime}+\Delta)}\psi(\beta_{k}^{\prime}) \bigg\rbrace Y_{\tau_{n}^{\prime}+\Delta}(\tau_{n}^{\prime},\beta_{0}^{\prime}+\cdots +\beta_{n}^{\prime})\bigg\rbrack.
\end{eqnarray*}
Now, we take the limit as $n\to +\infty$ in the right hand-side of this inequality to obtain that
$$ Y_{0}(0,0) \geq  \E \left[ \exp \bigg\lbrace \int_{0}^{+\infty } e^{-rs}h(s,L_{s}^{\delta^{\prime}})ds - \sum_{n\geq 0}e^{-r(\tau_{n}^{\prime}+\Delta)}\psi(\beta_{n}^{\prime}) \bigg\rbrace \right]= J(\delta^{\prime})$$since the series is convergent and bounded and, as above, $\underset{n\to\infty}{\lim}Y_{\tau_{n}^{\prime}+\Delta}(\tau_{n}^{\prime},\beta_{0}^{\prime}+\cdots +\beta_{n}^{\prime})=1$. This latter point can be obtained by \eqref {estimyexp} and the fact that
$Y(\nu,\xi)\ge Y^0(\nu,\xi)$. Therefore, $ Y_{0}(0,0) \geq   J(\delta^{\prime})$. Thus, we conclude that for any arbitrary strategy $\delta$ in $\mathcal{A}$, we have that
 $$ Y_{0}(0,0)=J(\delta^{*})=\Sup_{\delta \in \mathcal{A}}J(\delta)$$which means that $\delta^{*}$ is optimal.
\end{proof}
\section{Appendix}
\noindent \underline{\bf Part (I)}: \textbf{Snell envelope.}
\medskip

\noindent Let $U$ be an $\mathcal{F}$-adapted c\`adl\`ag process which belongs to class $[D]$, i.e. the random variables set $\{U_\theta, \theta \in \mathcal{T}\}$ is uniformly integrable. The Snell envelope of the process $U$ denoted by $SN(U)$ is the smallest c\`adl\`ag super-martingale which dominates $U$. It exists and satisfies
\begin{enumerate}[i)]
\item
\begin{equation}
\forall t\geq 0,\quad SN_{t}(U):=\esssup\limits_{\theta \in \mathcal{T}_{t}}\E[U_{\theta}|\Fc_{t}].
\end{equation}
\item $\lim_{t\rightarrow \infty}SN_t(U)=\limsup_{t\rightarrow \infty}U_t$.
\item The jumping times of $(SN_{t}(U))_{t\geq 0}$ are predictable and verify
$\{\Delta (SN_{t}(U))<0\}\subset \{SN_{t^{-}}(U)=U_{t^{-}}\}\cap \{\Delta_{t} U<0\}$.
\item  If $U$ has only positive jumps on $[0,\infty]$, then $SN(U))$ is a continuous process on $[0,\infty]$. Moreover, if $\theta$ is an $\Fc_{t}$-stopping time and, $\tau^{*}_{\theta}= \inf\{ s\geq \theta, SN(U)_{s}\leq U_{s}\}$
($+\infty$ if empty), then $\tau^{*}_{\theta}$ is optimal after $\theta$, i.e.,
\begin{equation}
SN(U)_{\theta}=\mathbb{E}[SN(U)_{\tau^{*}_{\theta}}|\Fc_{\theta}]= \mathbb{E}[U_{\tau^{*}_{\theta}}|\Fc_{\theta}]= \esssup\limits_{\tau \geq \theta} \mathbb{E}[U_{\tau}|\Fc_{\theta}].
\end{equation}
\item If $(U_n)_{n\geq 0}$ and $U$ are c\`adl\`ag processes of class $[D]$ and such that the sequence of process
$(U_n)_{n\geq 0}$ converges increasingly and pointwisely to $U$, then $ (SN(U_{n}))_{n\geq 0}$
converges increasingly and pointwisely to $SN(U)$.
\medskip

For further reference and details on the Snell envelope, we refer to \cite{El-Karoui1} or \cite{Dellacherie}.

\end{enumerate}
\noindent \underline{\bf Part (II)}: \textbf{Optional and predictable projections}
\medskip

\noindent Let $X:=(X_t)_{t\geq 0}$ be a measurable bounded process.
\medskip

\noindent i) There exists an optional (resp. predictable) process $Y$ (resp. $Z$) such that
$$\E[X_T\1_{\{T<\infty\}}|\Fc_T]=Y_T\1_{\{T<\infty\}} , \,\,\P-a.s. \mbox{ for any stopping time }T$$
(resp.
$$\E[X_T\1_{\{T<\infty\}}|\Fc_{T^{-}}]=Z_T\1_{\{T<\infty\}} , \,\,\P\mbox{-}a.s. \mbox{ for any predictable stopping time }T).$$
The process $Y$ (resp. $Z$) is called the optional (resp. predictable) projection of the process $X$.\\
\medskip
\noindent ii) If $X$ is c\`adl\`ag, then $Y$ is also c\`adl\`ag.
\medskip

\noindent iii) Since the filtration $(\Fc_t)_{t\geq 0}$ is Brownian then $\Fc_{T^{-}}=\Fc_{T}$ and the processes $Y$ and $Z$ are undistinguishable. In particular, the optional projection of a bounded continuous process is also continuous. Finally for any predictable stopping time $T$
$$
\E[\Delta _TX|\Fc_{T}]=\Delta_T Z,\quad \P\mbox{-}a.s.
$$For more details one can see (\cite{Dellacherie}, pp.113,  ).

\end{document}